\numberwithin{equation}{section}
\theoremstyle{plain}% default
\newtheorem{theorem}{Theorem}[section]
\newtheorem{corollary}[theorem]{Corollary}
\newtheorem{proposition}[theorem]{Proposition}
\newtheorem{lemma}[theorem]{Lemma}
\theoremstyle{definition}
\newtheorem{definition}[theorem]{Definition}
\theoremstyle{remark}
\newtheorem{remark}[theorem]{Remark}
\newcommand{\cp}{\mathop{\rm cap}}
\newcommand{\field}[1]{\mathbb{#1}}
\newcommand{\R}{\field{R}}
\newcommand{\N}{\field{N}}
\newcommand{\PP}{{\mathcal P}}
\renewcommand{\bar}{\overline}
\renewcommand{\hat}{\widehat}
\newcommand{\eps}{\epsilon}
\def \diam{{\rm diam}}
\def\XXint#1#2#3{{\setbox0=\hbox{$#1{#2#3}{\int}$}
\vcenter{\hbox{$#2#3$}}\kern-.5\wd0}}
\begin{document}
\title{
Riesz energy problems with external fields \\
and related theory}
%\titlerunning{Riesz potential problems}
%\baselineskip=8mm
\author{P. Dragnev, R. Orive, E. B. Saff and F. Wielonsky}
\date{\today}
\maketitle
\begin{center}
Dedicated to Natalia Zorii for her extensive contributions to potential theory
\end{center}
\begin{abstract}
In this paper, we investigate Riesz energy problems on %closed (possibly
unbounded conductors in $\R^d$ in the presence of general external fields $Q$, not necessarily satisfying the growth condition $Q(x)\to+\infty$ as $|x|\to+\infty$ assumed in several previous studies.
%In this context, both the admissible and the weakly admissible settings are considered,
We provide sufficient conditions on $Q$ for the existence of an equilibrium measure and
%necessary conditions for
the compactness of its support. Particular attention is paid to the case of the hyperplanar conductor $\R^{d}$, embedded in $\R^{d+1}$, when the external field is created by the potential of a signed measure $\nu$ outside of $\R^{d}$.
%supported in $\R^{d+1}\setminus \R^{d}$.
Simple cases where $\nu$ is a discrete measure are analyzed in detail. New theoretic results for Riesz potentials, in particular an extension of a classical theorem by de La Vall\'ee-Poussin, are established. These results are of independent interest.
\end{abstract}
\noindent
{\bf Mathematics Subject Classification} 31B05, 31B15, 31C15, 78A30
\\[\baselineskip]
{\bf Keywords} Riesz potentials, Equilibrium measures, External fields, Balayage.
\section{Introduction}

This paper is devoted to the study of weighted Riesz $s$-equilibrium measures in $\R^d$, $d\geq2$.
%, in the presence of external fields.
For $0<s<d$, we will be dealing with their associated Riesz $s$-potentials \textcolor{black}{and Riesz $s$-energies} of the form
\begin{equation}\label{defpot}
U^{\sigma}(x) =U_{s}^\sigma (x):= \int \frac{d\sigma (t)}{|x-t|^s}, \qquad 
\textcolor{black}{I(\sigma)=I_{s} (\sigma):= \iint\frac{d\sigma (x)d\sigma (t)}{|x-t|^s},}
%,}\qquad0<s<d,
\end{equation}
for \textcolor{black}{(signed)} measures $\sigma$ supported in $\R^d$, where $|x|$ denotes the euclidean norm of $x$ in $\R^{d}$. Potentials as in (\ref{defpot}) will  also serve to define our external fields, this time with measures $\sigma$ supported in the ambient space $\R^{d+1}$.

Note that
$$U_0^{\sigma}(x) := - \int \log |x-t|\,d\sigma (t)\,,$$
is the limit case of the Riesz potentials as $s\rightarrow 0^+$ (see e.g. \cite{Land}); hereafter, we refer to this case as the log case or, simply, as $s=0$. In the present paper we
assume
%focus on the case
$0<s<d$, and in most cases we restrict ourselves to $d-2 \leq s < d$.

In the case of logarithmic potentials it is common to study energy problems on \textcolor{black}{a closed unbounded conductor} $\Sigma$ in the complex plane in the presence of an external field $Q$ defined on $\Sigma$. It is well known that if $Q$ is admissible, that is, \textcolor{black}{lower semicontinuous} such that the logarithmic capacity of $\{z\in \Sigma, \,Q(z) < \textcolor{black}{+\infty}\}$ is positive,
%(we denote by $\cp_0 (E)$ the logarithmic capacity of a set $E$)
and $Q$ satisfies the condition
\begin{equation}\label{logadmiss}
\lim_{|z| \rightarrow \textcolor{black}{+\infty},\,z\in\Sigma}(Q(z) - \log |z|) = +\infty,
\end{equation}
%the external field is admissible in the sense of both
then the existence of the equilibrium measure and the compactness of its support hold true,
see \cite[Theorem I.1.3]{ST}.
Furthermore, in the last 15 years a class of external fields satisfying a weaker growth condition than \eqref{logadmiss}, namely,
\begin{equation*}%\label{logwadmiss}
\lim_{|z| \rightarrow \textcolor{black}{+\infty},\,z\in\Sigma}(Q(z) - \log |z|) = M \in (-\infty,+\infty],
\end{equation*}
has received a growing interest. Under this condition it can be proven that there still exists an equilibrium measure, though the compactness of its support is, in general, no longer valid (see \cite{BLW}, \cite{Hardy}, \cite{Hardy-Kuijlaars}, and \cite{Simeonov}, among others); however, recently a broad class of ``weakly admissible'' external fields with compactly supported equilibrium measures was found in \cite{OSW}.

The study
of Riesz $s$-potentials was initiated in \cite{R1,R2}. Related minimum energy problems (or Gauss variational problems) were investigated in \textcolor{black}{\cite{Fu},} \cite{O}, \cite{Land}, \cite{M}, and numerous contributions by N. Zorii, see the discussion before Theorem \ref{equiv-Min-Fro} for precise references. Recent applications of the \textcolor{black}{Riesz potential theory} often involve compact conductors such as balls, spheres or other compact manifolds in $\R^d$, but there are applications dealing with the case of unbounded conductors, in particular, the whole space $\R^d$, see e.g. \cite{chafai}, \cite{leble} and \cite{abey}. In these contributions the condition that
\begin{equation}\label{Q-infty}
Q(x) \to +\infty\quad\text{as}\quad |x|\to+\infty,
\end{equation}
is required to ensure the existence of the equilibrium measure $\mu_Q$ and the compactness of its support; see also Section 4.4 in the recent monograph \cite{BHS}. As pointed out in \cite{abey}, if condition (\ref{Q-infty})
%, weaker than \eqref{logadmiss},
holds, it is easy to adapt the proof of \cite[Theorem I.1.3]{ST} to the case of Riesz potentials for general $0<s<d$ (see also \cite[Theorem 4.4.7]{BHS}). However, this sufficient condition is clearly not sharp, as noted in the also recent article \cite{BDO}, where a simple family of external fields on the real axis for which $\lim_{|x| \rightarrow +\infty} Q(x) = 0$ is shown to have compactly supported equilibrium measures.

In the present paper we aim to study Riesz $s$-energy problems in $\R^d$ for
%both admissible and weakly admissible
general external fields $Q$ \textcolor{black}{that fail to satisfy} (\ref{Q-infty}). In particular, the case of the hyperplanar conductor $\R^{d}$ in the presence of external fields of the form $Q(x) = U^{\nu} (x)$, where $\nu$ is a signed measure compactly supported on $\R^{d+1}\setminus \R^{d}$, is analyzed in detail. Note that here
%throughout this paper
we shall consider $\R^{d}$ as embedded in $\R^{d+1}$; that is, $\R^{d} = \R^{d} \times \{0\} \subset \R^{d+1}$.

The outline of the paper is as follows. In Section \ref{main} we state and comment on our main results, first regarding general external fields $Q$ and second, relating to the particular case where $Q$ equals the potential $U^{\nu}$, with $\nu$ as above. Section \ref{preliminary} is devoted to some definitions, properties and auxiliary results useful for the proofs of the main results.
In particular we prove a version for Riesz potentials of a classical result by de La Vall\'ee Poussin for Newton potentials. We also show the monotonicity of the Robin constant with respect to the external field.
These results may be of independent interest.
In Section \ref{examples} the simplest cases corresponding to discrete measures $\nu$ are analyzed with more detail. Finally, Section \ref{proofs} is devoted to the proofs of our theorems.
%%%%%%%%%%%%%%%%%%%%%
\section{Riesz energy problems in $\R^d$ and main results}
\label{main}
Hereafter, all the positive measures $\mu$ considered are finite on compact sets and, when they have unbounded supports, they
 integrate the Riesz $s$-kernel at infinity, namely
\begin{equation}\label{cond-inf}
\int_{|t|>1}\frac{d\mu(t)}{|t|^{s}}<+\infty.
\end{equation}
%(which is the case for $U^{\mu_{Q}}$),
%Also, we will only consider Borel subsets of $\R^{d}$. When we speak of the capacity of a Borel set, we mean its (inner or outer) Riesz $s$-capacity.
Note, that any probability measure trivially satisfies (\ref{cond-inf}).

Let
\begin{itemize}
\item $\Sigma$ be a closed (possibly) unbounded \textcolor{black}{conductor in} $\R^d$,
\item $\PP(\Sigma)$ be the set of probability measures on $\Sigma$,
\item $Q(x)$ be a \textcolor{black}{lower semicontinuous} function on $\Sigma$, lower bounded if $\Sigma$ is unbounded, such that $\{x\in\Sigma,~Q(x)<+\infty\}$ has positive $s$-capacity (see below for the definition of capacity).
\end{itemize}
%\item

Now, we consider the following energy problem for Riesz $s$-potentials: find
a measure $\mu_Q\in\PP(\Sigma)$ that minimizes the weighted energy
\begin{equation}\label{weightenergy}
I_Q(\sigma) := \textcolor{black}{I(\sigma)} + 2\int Q(x)d\sigma (x) ,
\end{equation}
among all measures $\sigma$ in $\PP(\Sigma)$. We denote by $W_{Q}(\Sigma)$ this infimum. \textcolor{black}{When $Q=0$, the infimum of $I (\sigma)$ is called the {\em $s$-energy} of the set $\Sigma$ and is denoted by $W(\Sigma)$}. When $\Sigma=K$ is compact, its $s$-energy is finite and we define its {\em $s$-capacity} \footnote{\textcolor{black}{Alternative definition of a capacity of a compact set $K$  due to de La Vall\'{e}e-Poussin is given as $\cp(K)=\max \mu(K)$, where the maximum is taken over positive measures supported on $K$ \textcolor{black}{such that $U^\mu (x)\leq 1$ on $S_\mu$, the support of $\mu$ }(see \cite[p. 139]{Land}).} } as the \textcolor{black}{reciprocal of its $s$-energy, namely $\cp(K) := W(K)^{-1}$,} and for a Borel set $B$,
$$
\cp(B):=\sup\{\cp(K),~K\subset B,~K\text{ compact}\}.
$$
Every set that is contained in a Borel set of zero capacity is considered to have capacity zero.
As usual, we say that an inequality holds quasi-everywhere (abbreviated q.e.) on a set if the set of exceptional points is of capacity zero or, equivalently, has infinite energy.

Our first Theorem is a reminder of basic properties about the Riesz energy problem. These properties are well-known in the case of the logarithmic kernel.
For the Riesz kernel, %and in very general settings,
they are particular cases of more general results, obtained by N. Zorii, about energy problems for families of noncompact conductors on a locally compact space, with general kernels and external fields (also in the form of a potential of a signed measure of finite energy), see \cite[Theorems 1-4]{Z0}, \cite[Theorem \textcolor{black}{1 and 2 and Proposition 1}]{Z1}, \cite[Theorems \textcolor{black}{7.1-7.3}]{Z2}. The results stated in the next theorem can also be found in \cite[Theorem 1.2]{chafai} or \cite[Theorem 4.4.14]{BHS}, when (\ref{Q-infty}) is satisfied.
%In the present setting it is new.
One of the main properties is a characterization of a minimizing measure $\mu_{Q}$ (also called equilibrium measure), whenever it exists, in terms of
%This measure, provided its existence, is characterized
the so-called Frostman inequalities. Throughout, we will denote by $S_{\mu}$ the support of a measure $\mu$.
%%%%%%%
\begin{theorem}\label{equiv-Min-Fro}
Assume $0<s<d$. Then, \\
(i) $-\infty<W_{Q}(\Sigma)<\textcolor{black}{+\infty}$;\\
(ii) if a minimizing measure $\mu_{Q}$ exists, it is unique; \\
(iii) if $\mu_{Q}$ is a minimizing measure, the Frostman inequalities
\begin{align}
U^{\mu_Q} (x) + Q(x)  & \geq F_{Q},\quad \textcolor{black}{\text{q.e. on}}\; \Sigma,\label{Frostman1}\\
U^{\mu_Q} (x) + Q(x)  & \leq F_{Q},\quad x\in S_{\mu_Q}, \label{Frostman2}
\end{align}
hold true, where the finite constant
$$F_{Q}=I(\mu_{Q})+\int Qd\mu_{Q},$$
is referred to as the equilibrium constant or the (modified) Robin constant. \\
(iv) Conversely, if $\mu\in\PP(\Sigma)$ is a measure with $I_{Q}(\mu)<+\infty$ satisfying
\begin{align}
U^{\mu} (x) + Q(x)  & \geq F,\quad \textcolor{black}{\text{q.e. on}}\; \Sigma,\label{Frostman3}
\\
U^{\mu} (x) + Q(x)  & \leq F,\quad \textcolor{black}{\text{q.e. on}}\; S_{\mu}, \label{Frostman4}
\end{align}
%(\ref{Frostman1})-(\ref{Frostman2})
with some \textcolor{black}{finite} constant $F$, then $\mu$ is a minimizing measure and $F=F_{Q}$. Moreover,
if $\mu_{Q}$ exists,
(\ref{Frostman3}) may be replaced with the weaker inequality
\begin{equation}\label{Frostman5}
U^{\mu} (x) + Q(x) \geq F,\quad \textcolor{black}{\text{q.e. on}}\; S_{\mu_Q}.
\end{equation}
\end{theorem}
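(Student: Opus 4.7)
All four items rest on three classical properties of the Riesz $s$-kernel for $0<s<d$: strict positive definiteness (so $I(\mu_1-\mu_2)\geq 0$ for any two signed measures of finite energy, with equality iff $\mu_1=\mu_2$); lower semi-continuity of the potential $U^\mu$; and the fact that a measure of finite energy does not charge sets of zero $s$-capacity. Together with a variational perturbation argument, these suffice for the whole statement.

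For (i), the bound $W_Q(\Sigma)>-\infty$ is immediate from $I(\sigma)\geq 0$ and the lower boundedness of $Q$, which together give $I_Q(\sigma)\geq 2\inf_\Sigma Q$. For the finiteness from above, since $\{Q<+\infty\}$ has positive capacity and is the union of the nested sets $\{Q\leq N\}$, some $\{Q\leq N\}$ has positive capacity, and the equilibrium measure of a compact subset of it lies in $\PP(\Sigma)$ with finite $I_Q$. For (ii), the parallelogram identity
$$I_Q\Bigl(\tfrac{\mu_1+\mu_2}{2}\Bigr)=\tfrac{1}{2}I_Q(\mu_1)+\tfrac{1}{2}I_Q(\mu_2)-\tfrac{1}{4}I(\mu_1-\mu_2),$$
applied to two minimizers, forces $I(\mu_1-\mu_2)\leq 0$, hence $\mu_1=\mu_2$ by strict positive definiteness.

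For (iii), set $F_Q:=I(\mu_Q)+\int Q\,d\mu_Q$. To establish (\ref{Frostman1}), assume for contradiction that $E:=\{x\in\Sigma:U^{\mu_Q}(x)+Q(x)<F_Q-\eps\}$ has positive capacity for some $\eps>0$, choose a probability measure $\lambda$ on a compact subset of $E$ with $I_Q(\lambda)<+\infty$, and form $\mu_t:=(1-t)\mu_Q+t\lambda$. A direct expansion gives
$$\tfrac{d}{dt}I_Q(\mu_t)\big|_{t=0^+}=2\int(U^{\mu_Q}+Q)\,d(\lambda-\mu_Q)\leq 2(F_Q-\eps)-2F_Q=-2\eps<0,$$
contradicting the minimality of $\mu_Q$. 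For (\ref{Frostman2}), integrate (\ref{Frostman1}) against $\mu_Q$, which charges no exceptional set; combined with $\int(U^{\mu_Q}+Q)\,d\mu_Q=F_Q$, this yields $U^{\mu_Q}+Q=F_Q$ $\mu_Q$-a.e., and lower semi-continuity of $U^{\mu_Q}+Q$ upgrades the inequality $\leq F_Q$ to every point of $S_{\mu_Q}$.

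For (iv), given $\mu\in\PP(\Sigma)$ with $I_Q(\mu)<+\infty$ satisfying (\ref{Frostman3})--(\ref{Frostman4}) for some constant $F$, note that $\mu$ charges no zero-capacity set, so (\ref{Frostman4}) integrates to $\int(U^\mu+Q)\,d\mu=F$; similarly, for any $\sigma\in\PP(\Sigma)$ with $I_Q(\sigma)<+\infty$, (\ref{Frostman3}) gives $\int(U^\mu+Q)\,d\sigma\geq F$. Expanding
$$I_Q(\sigma)-I_Q(\mu)=I(\sigma-\mu)+2\int(U^\mu+Q)\,d(\sigma-\mu)\geq I(\sigma-\mu)\geq 0$$
shows $\mu$ is the minimizer, whence $F=F_Q$ by (iii). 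If $\mu_Q$ is known to exist, the same chain with $\sigma=\mu_Q$ only requires $\int(U^\mu+Q)\,d\mu_Q\geq F$; since $\mu_Q$ charges no zero-capacity set, (\ref{Frostman5}) suffices for this, and $\mu=\mu_Q$ follows by uniqueness. The main delicacy throughout is the careful bookkeeping of ``quasi-everywhere'' versus ``$\mu$-almost-everywhere'' exceptional sets, underwritten by the classical fact that finite-energy measures ignore sets of zero $s$-capacity.
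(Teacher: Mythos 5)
Your proof is correct and follows essentially the same route as the paper: the paper establishes (ii) by citing the positive-definiteness-plus-parallelogram argument (``as usual''), defers (iii) to the variational perturbation argument in Saff--Totik, and for (iv) uses precisely the decomposition $I_Q(\sigma)=I_Q(\mu)+I(\sigma-\mu)+2\int(U^\mu+Q)\,d(\sigma-\mu)$ (following BLW's Proposition 2.6); you simply carry out all these arguments explicitly. One small imprecision worth noting: in part (iv), ``(\ref{Frostman4}) integrates to $\int(U^\mu+Q)\,d\mu=F$'' requires both (\ref{Frostman3}) and (\ref{Frostman4}) (the former gives $\geq F$, the latter $\leq F$), and in the last assertion, replacing (\ref{Frostman3}) by (\ref{Frostman5}) means you only get $\int(U^\mu+Q)\,d\mu\leq F$ from (\ref{Frostman4}), not equality — but combined with $\int(U^\mu+Q)\,d\mu_Q\geq F$ from (\ref{Frostman5}) this still gives $\int(U^\mu+Q)\,d(\mu_Q-\mu)\geq 0$, so the chain closes and the conclusion stands.
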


%\textcolor{black}{Since inequality (\ref{cond-inf}) is a necessary and sufficient condition for finiteness almost everywhere of the potential $U^{\mu}$, see \cite[p.\ 61]{Land}, it follows in particular that a minimizing measure $\mu_{Q}$ always satisfies (\ref{cond-inf}).}
For the sake of completeness, we will provide a proof of Theorem \ref{equiv-Min-Fro}, see Section \ref{proofs}.
\textcolor{black}{\begin{remark} We note that when $\lim_{|x|\to +\infty, x\in \Sigma} Q(x)=+\infty$ Theorem \ref{equiv-Min-Fro} reduces to the case of a compact $\Sigma$, because the minimization of \eqref{weightenergy} over $\mathcal{P}(\Sigma)$ is equivalent to a minimization over $\mathcal{P}(\Sigma\cap \{x:Q(x)\leq M \})$ for some finite $M>0$ (see e.g. \cite[Theorem 4.4.14.v]{BHS}). Therefore, there is no loss of generality when we restrict to a finite limit in the assumption A2 below. 
\end{remark}}

For the next results, we will need the notion of thinness of a set at infinity, due, in the \textcolor{black}{Newton} case, to M. Brelot  \cite{Br} and J.L. Doob \cite[pp.\ 175-176]{D}. This notion has been also defined in the Riesz case ($0<s<d$) by Kurokawa and Mizuta in \cite{KM}, and since we will need one result of \cite{KM}, see Lemma \ref{Mizu} below, we will stick to the definition given there, namely that a Borel set $\Sigma$ is thin at infinity if it satisfies the following Wiener condition :
$$
\sum_{k=1}^{+\infty} 2^{-ks} \cp\left(\Sigma^{(k)}\right)<+\infty,
$$
where 
$\Sigma^{(k)}=\left\{x \in \Sigma,~ 2^{k} \leq|x|<2^{k+1}\right\}$. Denoting by
%, namely $\Sigma$ is thin at infinity if $\Phi(\Sigma)$ is thin at 0, where
$\Phi$ the inversion $\Phi(x)=x/|x|^{2}$ with respect to the unit sphere of $\R^{d}$, the previous inequality is equivalent to (see \cite[p.287]{Land}),
$$
\sum_{k=1}^{+\infty} 2^{ks} \cp\left(\Phi(\Sigma^{(k)})\right)<+\infty.
$$
When $d-2\leq s<d$, this is a Wiener necessary and sufficient condition for the point 0 to be an irregular point of
\textcolor{black}{the complement of} $\Phi(\Sigma)$. It is known from \cite[Theorem 5.10]{Land} that 
\textcolor{black}{a point $x$ is an irregular point of the complement of a set $E$ if and only if $E$ is thin at $x$ (here, we follow the terminology of most authors, see e.g. \cite{M, Ra, ST}, while, in the terminology of \cite{Land}, $x$ is an irregular point of $E$)}. Hence, with the definition above and when $d-2\leq s<d$, the set $\Sigma$ is thin at infinity if and only if $\Phi(\Sigma)$ is thin at 0.
\textcolor{black}{\textcolor{black}{Following \cite[p. 78]{M}}, when $0<s<d$, we define a Borel set $\Sigma$ to be thin at a $x_{0}\in\R^{d}$ if 
$$
\sum_{k=1}^{+\infty} 2^{ks} \cp\left(\Sigma_{x_{0}}^{(k)}\right)<+\infty,
$$
where 
$\Sigma_{x_{0}}^{(k)}:=\left\{x \in \Sigma,~ 2^{-k} \leq|x-x_{0}|<2^{-k+1}\right\}$.
}

For more details about the notions of thinness of a set at a point and the fine topology, see e.g. \cite[Chapter V, \S3]{Land}. For a recent study of thinness of a set at infinity in the theory of Riesz potentials, in particular how the notions introduced by Brelot and Doob extend to this framework, see \cite{Z3}.

In the sequel, {\textcolor{black}{unless explicitly stated otherwise, whenever $\Sigma$ is unbounded, we assume:}
\\[.5\baselineskip]
\textbf{A1)} %If the point at infinity belongs to the boundary of $\Sigma$, then
$\Sigma$ is not thin at infinity.
\\[.5\baselineskip]
\textbf{A2)} $Q$ can continuously be extended to the point at infinity\footnote{\textcolor{black}{To avoid ambiguity, we denote by $\infty$ the Alexandroff point of ${\mathbb R}^d$ and use the explicit notations of $-\infty$ and $+\infty$ in the context of the real line.}}; that is, there exists the limit, \textcolor{black}{
$$\lim_{|x|\to+\infty,~x\in \Sigma}\,Q(x) = : Q(\infty),$$
and moreover this limit is finite:
$$-\infty <Q(\infty)<+\infty .$$}
\begin{remark}\label{Rem-Frost}
\textcolor{black}{We claim that  when $\Sigma$ is unbounded and $\mu_Q$ exists, the assumptions A1-A2 and \eqref{Frostman1} imply $Q(\infty)\geq F_{Q}$. Indeed, by Lemma \ref{Mizu} below, there is a Borel set $P\subset{\mathbb{R}^d}$, thin at infinity, and such that 
\begin{equation}\label{Z6}\lim_{|x|\to +\infty, \, x\not\in P} U^{\mu_Q} (x) =0.
\end{equation} 
As $\Sigma$ is not thin at infinity, we have $\cp(\Sigma\setminus P)>0$, and hence there is a sequence of points $x_{n} \in \Sigma\setminus P$, $|x_n|\to +\infty$, for which \textcolor{black}{both \eqref{Frostman1} and  \eqref{Z6} hold true.}}
\end{remark}

Our next result concerns the existence of a minimizing measure on an unbounded set $\Sigma$, in a general external field $Q$, depending on its behavior at infinity. In that connection, see also \cite[Theorem 8.1, Proposition 8.1]{Z2}.
\begin{theorem}\label{thm:general}
Let $\Sigma$ be an unbounded \textcolor{black}{conductor in} $\R^{d}$ and $Q$ an external field satisfying conditions A1--A2. Then, for $0<s<d$,
\begin{itemize}
\item[(i)] sufficient conditions for the existence of a solution $\mu_Q$ to the Riesz energy problem
are the following growth conditions on $Q$ at infinity:
\begin{equation}\label{cond-Q}
|x|^{s}(Q(x) - Q(\infty))\leq  - 1,\qquad\text{for large $|x|$ },\ x\in\Sigma;
\end{equation}
or
\begin{equation}\label{cond2-Q}
\lim_{|x|\to+\infty,~x\in\Sigma\setminus P} |x|^{s}(Q(x)- Q(\infty))
\leq-1,
\end{equation}
for some set $P$ thin at infinity.
\item[(ii)] \textcolor{black}{
Assume
\begin{equation}\label{beh-Q}
\lim_{|x|\to+\infty,~x\in\Sigma}\,|x|^s\,\left(Q(x) - Q(\infty)\right)
\end{equation}
exists. %Then, 
If $\Sigma$ admits a minimizing measure $\mu_{Q}$ with
%, a necessary condition for $\mu_Q$ to have 
an unbounded support then
the limit in (\ref{beh-Q}) equals $-1$.
%\begin{equation}\label{beh-Q}
%\liminf_{|x|\to+\infty,~x\in\Sigma\setminus P}\,|x|^s\,\left(Q(x) - Q(\infty)\right) = -1,
%\end{equation}
%for some set $P$ thin at infinity.
}
\item[(iii)] if $d-2\leq s<d$ and there exists $c<1$ such that
\begin{equation}\label{cond-Q2}
-\frac{c}{|x|^{s}}\leq Q(x)-Q(\infty),\qquad x\in \Sigma\,, %\text{as $x\to\infty$}
\end{equation}
then the Riesz energy problem has no solution.
\end{itemize}
%In assertions i) and ii), one may assume that $s$ belongs to the extended range $0<s<d$.
\end{theorem}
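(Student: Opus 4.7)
The plan is to treat the three parts in order. For \textbf{(i)}, I would apply the direct method: take a minimizing sequence $(\sigma_n)\subset\mathcal{P}(\Sigma)$, establish tightness via the growth condition (mass accumulating at $|x|$ large is penalized by $Q(x)\leq Q(\infty)-|x|^{-s}$ in tandem with the self-repulsion order $|x|^{-s}$, preventing escape to infinity), extract a vague limit $\mu^\star\in\mathcal{P}(\Sigma)$, and identify it as $\mu_Q$ by lower semicontinuity of $I_Q$. An arguably cleaner route that handles both \eqref{cond-Q} and \eqref{cond2-Q} uses the inversion $\Phi(x)=x/|x|^2$ together with the Riesz identity $|\Phi(x)-\Phi(y)|^{-s}=|x|^s|y|^s|x-y|^{-s}$ to pass to a bounded conductor $\Phi(\Sigma)\cup\{0\}$ with a modified continuous external field, reducing to the compact theory.

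For \textbf{(ii)}, the key tool is the asymptotic $|x|^s U^{\mu_Q}(x)\to 1$ as $|x|\to\infty$ along $x\notin P$ for some set $P$ thin at infinity, which follows from Lemma \ref{Mizu} applied to the probability measure $\mu_Q$. Since $S_{\mu_Q}$ is unbounded and $\Sigma$ is not thin at infinity (A1), one can select $x_n\in S_{\mu_Q}\setminus P$ with $|x_n|\to\infty$ on which Frostman's equality $U^{\mu_Q}(x_n)+Q(x_n)=F_Q$ is valid. Multiplying by $|x_n|^s$ and passing to the limit gives
$$ 1 + \ell + \lim_{n\to\infty}|x_n|^s\bigl(Q(\infty)-F_Q\bigr) = 0, $$
where $\ell$ is the limit in \eqref{beh-Q}. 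Since $F_Q\leq Q(\infty)$ by Remark \ref{Rem-Frost}, a strict inequality would force the third term to $+\infty$; hence $F_Q=Q(\infty)$ and $\ell=-1$.

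For \textbf{(iii)}, argue by contradiction, assuming $\mu_Q$ exists. If $S_{\mu_Q}$ is unbounded, run the preceding argument with the \emph{upper} Frostman inequality in place of equality: along $x_n\in S_{\mu_Q}\setminus P$, $|x_n|\to\infty$,
$$ 1+o(1)+|x_n|^s\bigl(Q(x_n)-Q(\infty)\bigr)\leq |x_n|^s\bigl(F_Q-Q(\infty)\bigr)\leq 0, $$
the last inequality using $F_Q\leq Q(\infty)$. Therefore $|x_n|^s(Q(x_n)-Q(\infty))\leq -1+o(1)$, in direct conflict with the hypothesis $|x|^s(Q(x)-Q(\infty))\geq -c>-1$.

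The main obstacle is the complementary case where $S_{\mu_Q}$ is compact, in which the upper Frostman inequality supplies no information outside $S_{\mu_Q}$. Here dominated convergence provides the stronger asymptotic $|x|^s U^{\mu_Q}(x)\to 1$ as $|x|\to\infty$ without any exceptional set; my plan is to Riesz-balayage $\mu_Q$ onto $\Sigma\cap\{|x|>R\}$ to produce a competitor $\mu_Q^R\in\mathcal{P}(\Sigma)$ (mass-preserving balayage is available in the range $d-2\leq s<d$) whose self-energy is strictly smaller than $I(\mu_Q)$ and whose external-field integral $\int Q\,d\mu_Q^R$ tends to $Q(\infty)$ as $R\to\infty$. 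The assumption $c<1$ should then guarantee that for $R$ sufficiently large the net change in $I_Q$ is negative, contradicting minimality; the sharp quantitative form required at this step should be supplied by the extension of de La Vall\'ee-Poussin's theorem to Riesz potentials established in Section \ref{preliminary}.
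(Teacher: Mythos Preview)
Your plan for \textbf{(i)} via the inversion/Kelvin transform is essentially the paper's route (Lemma~\ref{prelim}), so that part is fine.

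For \textbf{(ii)} there is a gap in the sentence ``one can select $x_n\in S_{\mu_Q}\setminus P$ with $|x_n|\to\infty$''. The hypothesis A1 says $\Sigma$ is not thin at infinity, but nothing prevents $S_{\mu_Q}$ itself from being thin at infinity (e.g.\ a sparse union of small balls drifting off), in which case the exceptional set $P$ from Lemma~\ref{Mizu} could swallow the entire tail of $S_{\mu_Q}$. The paper avoids this by not invoking Lemma~\ref{Mizu} along $S_{\mu_Q}$ at all: for the bound $K_Q\le -1$ it uses Fatou's lemma directly,
\[
\liminf_{|x|\to\infty,\,x\in S_{\mu_Q}}\int\frac{|x|^s}{|t-x|^s}\,d\mu_Q(t)\ \ge\ \int\liminf_{|x|\to\infty}\frac{|x|^s}{|t-x|^s}\,d\mu_Q(t)=1,
\]
which needs only that $S_{\mu_Q}$ is unbounded. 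The same remark applies to your treatment of the unbounded case in (iii); the paper packages the Fatou-type lower bound as Lemma~\ref{elem-lem} ($|x|^sU^{\mu}(x)\ge 1-\varepsilon$ for \emph{all} large $|x|$, no exceptional set), which is what you should use.

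The serious problem is \textbf{(iii)} when $S_{\mu_Q}$ is compact. Your balayage-competitor idea cannot produce a contradiction: any $\mu_Q^R\in\mathcal P(\Sigma)$ satisfies $I_Q(\mu_Q^R)\ge I_Q(\mu_Q)$ by minimality, and indeed the decomposition
\[
I_Q(\mu_Q^R)=I_Q(\mu_Q)+I(\mu_Q^R-\mu_Q)+2\int(U^{\mu_Q}+Q)\,d(\mu_Q^R-\mu_Q)
\]
together with the Frostman inequalities shows both correction terms are nonnegative. So pushing mass to infinity only confirms $I_Q(\mu_Q)\le 2Q(\infty)$, which is consistent with $F_Q\le Q(\infty)$, not contradictory. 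The de~La~Vall\'ee--Poussin theorem (Theorem~\ref{dlvp}) compares measures on the coincidence set of two potentials; it does not supply the quantitative energy gap you need here, and I do not see how to salvage the argument along these lines.

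The paper's mechanism is different: it compares Robin constants rather than energies. The external field $Q_1(x)=-|x|^{-s}$ has a minimizer with $F_{Q_1}=0$ (balayage of $\delta_0$ onto $\Sigma$). On the compact set $S_{\mu_Q}$ one has $Q(x)-Q(\infty)\ge -c|x|^{-s}\ge -|x|^{-s}+\varepsilon$ for some $\varepsilon>0$, and the monotonicity of the Robin constant (Proposition~\ref{monot}) then forces $F_Q-Q(\infty)\ge\varepsilon>0$, contradicting $F_Q\le Q(\infty)$. This step genuinely requires Proposition~\ref{monot}, whose proof in turn rests on the domination principle and the characterization \eqref{caract}; that is the missing ingredient in your outline.
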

\begin{remark}
%Note that if $Q(\infty)=\infty$, condition (\ref{cond-Q}) is trivially true. 
 \textcolor{black}{Actually, it can be shown by a similar argument that the conclusion of Theorem \ref{thm:general}(iii) holds  more generally if, instead of \eqref{cond-Q2}, we assume that 
\begin{equation}
-{c}U^\mu (x)\leq Q(x)-Q(\infty), \quad x\in \Sigma,
\end{equation}
where $c<1$ and $\mu\in \mathcal{P}(\Sigma)$ has compact support.} We note also that when $Q$ equals a constant  on an unbounded conductor and $d-2\leq s<d$, the Riesz energy problem has no solution, for condition \eqref{cond-Q2} is satisfied.\end{remark}

Theorem \ref{thm:general} applies to the particular case where the external field $Q$ is due to the action of an attractive mass placed at a point outside of the conductor $\Sigma$
(so that $Q$ is \textcolor{black}{continuous} on $\Sigma$).
%the origin (we assume that $0\notin \Sigma$).
For instance, assuming $0\notin \Sigma$,
 we have the following.
\begin{corollary}\label{cor:deltaorigin}
Under the assumptions in Theorem \ref{thm:general}, if $0\notin \Sigma$ and for some $c>0$,%suppose that
\begin{equation*}%\label{def-Q}
Q(x)=-\frac{c}{|x|^{s}},\qquad x\in\Sigma\,,
\end{equation*}
then\\
(i) for $0<s<d$ and $c\geq1$, the minimizing measure $\mu_{Q}$ exists and, for $c>1$, it has bounded support;\\
%-- if $c=1$, the minimizing measure $\mu_{Q}$ exists and has unbounded support,\\
(ii) for $d-2\leq s<d$ and $c<1$, a minimizing measure $\mu_{Q}$ does not exist.
\end{corollary}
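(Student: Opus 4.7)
The plan is to recognize the corollary as a direct application of the three parts of Theorem \ref{thm:general} to the specific external field $Q(x)=-c/|x|^{s}$. First I verify the hypotheses: since $0\notin\Sigma$, the function $Q$ is continuous on $\Sigma$, and it extends continuously to the point at infinity with finite limit
$$Q(\infty) = \lim_{|x|\to +\infty,\,x\in\Sigma} \left(-\frac{c}{|x|^{s}}\right) = 0,$$
so assumption A2 holds, while A1 is assumed throughout. The crucial quantity is then
$$|x|^{s}\bigl(Q(x) - Q(\infty)\bigr) = -c,\qquad x\in\Sigma,$$
identically constant on $\Sigma$; in particular the limit in \eqref{beh-Q} exists and equals $-c$.

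For the existence statement in part (i), when $c\geq 1$ we have $-c\leq -1$, so the sufficient growth condition \eqref{cond-Q} of Theorem \ref{thm:general}(i) is satisfied and the minimizing measure $\mu_{Q}$ exists. For the compactness of support when $c>1$, I would argue by contraposition of Theorem \ref{thm:general}(ii): the limit in \eqref{beh-Q} exists and equals $-c\neq -1$, so $\mu_{Q}$ cannot have unbounded support, i.e., $S_{\mu_{Q}}$ is bounded.

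For part (ii), when $c<1$, the identity $Q(x) - Q(\infty) = -c/|x|^{s}$ trivially yields \eqref{cond-Q2} (with equality) for this very constant $c<1$. Since the range $d-2\leq s<d$ is assumed, Theorem \ref{thm:general}(iii) applies and gives the non-existence of a minimizing measure.

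There is essentially no technical obstacle, the heavy lifting having already been done in Theorem \ref{thm:general}. What the corollary illustrates is that the Coulomb-type external field $-c/|x|^{s}$ is a sharp test case showing the role of the critical constant $-1$: at $c=1$ existence holds but Theorem \ref{thm:general}(ii) no longer forces compactness of support, while for $c<1$ the minimization problem has no solution at all.
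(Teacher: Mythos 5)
Your reduction of the corollary to Theorem \ref{thm:general} is formally correct, and the case analysis ($c\geq 1$ for existence via \eqref{cond-Q}, $c>1$ for bounded support via the contrapositive of part (ii), $c<1$ for non-existence via part (iii)) matches the paper's intent. However, you are overlooking a circularity that the paper explicitly flags. The paper's proof of Theorem \ref{thm:general}(iii) is not self-contained: it compares $Q$ to the critical field $-1/|x|^{s}$, applies Proposition \ref{monot}, and crucially invokes the fact that the Robin constant for $Q(x)=-1/|x|^{s}$ vanishes, i.e.\ $F_{Q}=0$. That vanishing is established only in the proof of Corollary \ref{cor:deltaorigin} itself. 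Thus, within the paper's logical architecture, deducing part (ii) by citing Theorem \ref{thm:general}(iii) as a black box is circular, and your concluding claim that ``there is essentially no technical obstacle'' misses where the real work lies.

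The paper closes this loop by proving the $c=1$ case directly, and this direct argument is the substantive content of the corollary's proof. Two independent proofs are given: first, recognizing that when $Q(x)=-1/|x|^{s}$ (and $d-2\leq s<d$), the equilibrium measure is the balayage $\widehat{\delta}_{0}$ of $\delta_{0}$ onto $\Sigma$, whose mass is preserved because $\Sigma$ is not thin at infinity, and for which the Frostman conditions hold with $F_{Q}=0$; and second, applying a Kelvin transform centered at the origin to convert the weighted problem on $\Sigma$ into the unweighted equilibrium problem on the compact set $\Sigma^{*}$, again yielding $F_{Q}=0$. To make your proposal logically complete, you would need to supply such a direct verification of existence and $F_{Q}=0$ for $c=1$ before the appeal to Theorem \ref{thm:general}(iii) can be made non-circularly.
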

\begin{remark}%\label{rem:extension}
Corollary \ref{cor:deltaorigin} extends for a general dimension $d\geq 2$ the results obtained in \cite[Theorem 2.1]{BDO} for the case where $d=2$, $\Sigma = \R$ and $Q$ is created by a single attractive charge placed at a point in $\R^{2} \setminus \R\,.$
When $c=1$, the support of the weighted equilibrium measure can be unbounded, as, for instance, \textcolor{black}{in \cite[Lemma 3.1]{BDO}}, or more generally, when $\Sigma=\R^{d}$ and $Q$ is created by an attractive charge placed at a point in $\R^{d+1} \setminus \R^{d}$, see Section \ref{w-Q-charges}, case (i). It can also be bounded, see Remark \ref{rem-locat} for more details. %{\color{black} Can we refer to some other example when $d-2<s<d$ and the support is known to be different from the entire set ?}
\end{remark}
In the current paper, we will also consider in detail the case where the conductor $\Sigma = \R^{d}$ and $Q$ is the potential $U^{\nu}$ of a signed measure $\nu$
supported in $\R^{d+1}\setminus \R^{d}$,
$$\textcolor{black}{Q(x) = U^{\nu}(x)}.$$

An auxiliary result about Riesz potentials will be useful.
\begin{lemma}[{cf.\ \cite[Theorem 3.3]{KM}}]\label{Mizu}
Let $\mu$ be a positive measure in $\R^{d}$. Then, there exists a Borel set $P$ thin at infinity such that
\begin{align}
\lim_{|x|\to+\infty,~x\in\R^{d}\setminus P}U^{\mu}(x) & =0, \label{Mizu1}
\\[10pt]
\lim_{|x|\to+\infty,~x\in\R^{d}\setminus P}|x|^{s}U^{\mu}(x) & =\mu(\R^{d}),\qquad  \text{if }\mu(\R^{d})<\infty. \label{Mizu2}
\end{align}
\end{lemma}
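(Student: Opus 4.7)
The plan is to transfer both statements, via the Kelvin inversion $\Phi(x)=x/|x|^{2}$ already introduced in the paper, to a question of fine continuity of a Riesz potential at the origin. Using the classical identity $|\Phi(x)-\Phi(u)|=|x-u|/(|x|\,|u|)$, I would introduce the measure $\mu^{*}$ on $\R^{d}\setminus\{0\}$ defined by $d\mu^{*}(t)=|t|^{s}\,d\tilde\mu(t)$, where $\tilde\mu$ is the image of $\mu$ under $\Phi$. Substituting $t=\Phi(u)$ in the integral defining $U^{\mu}$ then yields the Kelvin-type identity
\begin{equation*}
U^{\mu}(x)=\frac{1}{|x|^{s}}\,U^{\mu^{*}}(\Phi(x)),\qquad x\neq 0.
\end{equation*}
Condition \eqref{cond-inf} guarantees that $\mu^{*}$ has finite mass in any bounded neighborhood of the origin (indeed, $\mu^{*}(B(0,\varepsilon))=\int_{|u|>1/\varepsilon}d\mu(u)/|u|^{s}$), and when $\mu(\R^{d})<+\infty$ one checks directly that $U^{\mu^{*}}(0)=\int d\tilde\mu=\mu(\R^{d})$.

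Next I would invoke the fact that the Riesz potential of any positive measure is finely continuous at every point of $\R^{d}$ (see, e.g., \cite[Ch.~V]{Land}). Applied to $U^{\mu^{*}}$ at the origin, this produces a Borel set $E$, thin at $0$, such that $\lim_{y\to 0,\,y\notin E}U^{\mu^{*}}(y)=U^{\mu^{*}}(0)$. Setting $P=\Phi(E)$, I would verify that $P$ is thin at infinity by matching Wiener series: since $\Phi$ sends the dyadic annulus $\{2^{k}\le|x|<2^{k+1}\}$ onto $\{2^{-k-1}<|y|\le 2^{-k}\}$, the series $\sum_{k\ge 1}2^{ks}\cp(\Phi(P^{(k)}))$ coincides, up to an index shift and a bounded factor, with the Wiener series for $E$ at $0$, hence is finite. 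Substituting $y=\Phi(x)$ in the Kelvin identity then gives \eqref{Mizu2} when $\mu(\R^{d})<+\infty$, and \eqref{Mizu1} follows by further dividing by $|x|^{s}\to+\infty$.

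The main obstacle I anticipate is the case $\mu(\R^{d})=+\infty$ in \eqref{Mizu1}: then $U^{\mu^{*}}(0)=+\infty$ and fine continuity at $0$ alone does not imply $|y|^{s}U^{\mu^{*}}(y)\to 0$. To handle this, I would decompose $\mu=\mu|_{B(0,R)}+\mu|_{\R^{d}\setminus B(0,R)}$. The first, compactly supported piece has finite mass, so the previous paragraph applies and yields an exceptional set thin at infinity. For the tail, its Kelvin image is supported in $\{|t|<1/R\}$ with total mass $\int_{|u|>R}d\mu(u)/|u|^{s}\to 0$ as $R\to+\infty$ by \eqref{cond-inf}; running the fine-continuity argument on a dyadic exhaustion $R=2^{j}$ and combining the resulting exceptional sets via subadditivity of the Wiener sums should produce a single Borel set $P$ thin at infinity off which $U^{\mu}(x)\to 0$. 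The delicate bookkeeping required to merge these dyadic exceptional sets while preserving thinness is what I view as the main technical difficulty.
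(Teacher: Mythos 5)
The paper does not prove Lemma~\ref{Mizu}: it is quoted from Kurokawa--Mizuta \cite[Theorem~3.3]{KM}, so there is no ``paper proof'' to match your argument against. Your Kelvin-transform strategy is nonetheless a reasonable independent route, and for the finite-mass part it is essentially correct. The identity $U^{\mu}(x)=|x|^{-s}U^{\mu^{*}}(\Phi(x))$ with $d\mu^{*}(t)=|t|^{s}\,d\tilde\mu(t)$ is the standard Riesz--Kelvin relation, the computation $U^{\mu^{*}}(0)=\mu(\R^{d})$ is right, the translation of the Wiener series between thinness at $0$ and thinness at infinity is exactly the equivalence already recorded on page~3 of the paper, and so \eqref{Mizu2} follows from fine continuity of $U^{\mu^{*}}$ at $0$, with \eqref{Mizu1} as a corollary when the mass is finite. (One small point you should make explicit: fine continuity at a point gives, for each $\varepsilon>0$, a separate thin exceptional set, and extracting a single Borel set $E$ thin at $0$ off which the full limit holds requires the standard trick of summing truncated Wiener series along shrinking radii; this is routine but should be stated.)

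The genuine gap is precisely where you sensed trouble, the case $\mu(\R^{d})=+\infty$ of \eqref{Mizu1}, and the repair you propose does not close it. Since $\mu$ is locally finite, the tail $\mu|_{\R^{d}\setminus B(0,R)}$ has infinite total mass for \emph{every} $R$ whenever $\mu(\R^{d})=+\infty$. Consequently its Kelvin image $\mu_{R}^{*}$ still satisfies $U^{\mu_{R}^{*}}(0)=\mu_{R}(\R^{d})=+\infty$, and fine continuity at the origin only tells you $U^{\mu_{R}^{*}}(y)\to+\infty$ finely, which gives no control on $|y|^{s}U^{\mu_{R}^{*}}(y)$. The dyadic exhaustion therefore never produces a tail to which your finite-mass argument applies. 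Worse, the reduction is circular: showing $|y|^{s}U^{\sigma}(y)\to 0$ finely at $0$ for a finite measure $\sigma$ with $U^{\sigma}(0)=+\infty$ is, after one more Kelvin inversion, \emph{exactly} statement~\eqref{Mizu1} for a measure of infinite total mass. To finish this case one genuinely needs a different estimate (for instance, a direct capacity bound of the form $\cp(\{U^{\sigma}>\lambda\})\le\|\sigma\|/\lambda$ applied annulus by annulus to the pieces of $\mu^{*}$ living in successive dyadic shells around $0$, combined with a Kronecker/Abel-summation use of the hypothesis \eqref{cond-inf}); this is essentially what the Kurokawa--Mizuta proof does, and it cannot be bypassed by fine continuity alone.
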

From this lemma, we see that condition (\ref{cond2-Q}) of assertion (i) and assertion (ii) of Theorem \ref{thm:general} imply the following.
\begin{corollary}\label{cor:admiss}
Let $0<s<d$. Let $\nu$ be a signed measure of finite mass (both, $\nu^+$ and $\nu^-$ in the Jordan decomposition  $\nu=\nu^+-\nu^-$ have finite mass) with support in $\R^{d+1}\setminus \R^{d}$ (not necessarily compact), and let \textcolor{black}{$\Sigma=\mathbb{R}^d$ and} $Q = U^{\nu}$ %(not identically $\infty)$
be as above.
If $\nu (\R^{d+1}) \leq-1$, then there exists a unique equilibrium measure $\mu_Q$. Moreover, if
$\nu (\R^{d+1}) <-1$, the support $S_{\mu_Q}$ is a compact subset of $\R^{d}$.
\end{corollary}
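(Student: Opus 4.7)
The proof proceeds by applying Theorem~\ref{thm:general} to the conductor $\Sigma=\R^d$ with external field $Q=U^{\nu}$, once the necessary growth information on $Q$ at infinity is extracted from Lemma~\ref{Mizu}.

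First I would apply Lemma~\ref{Mizu} to the positive measures $\nu^+$ and $\nu^-$, viewed as finite positive measures on $\R^{d+1}$. This produces a Borel set $P\subset\R^{d+1}$, thin at infinity in $\R^{d+1}$, such that for $x\in\R^{d+1}\setminus P$ with $|x|\to+\infty$ one has $U^{\nu^\pm}(x)\to 0$ and $|x|^sU^{\nu^\pm}(x)\to\nu^\pm(\R^{d+1})$; by subtraction, $U^\nu(x)\to 0$ and $|x|^sU^\nu(x)\to\nu(\R^{d+1})$ outside $P$. Since the Riesz $s$-capacity of a subset of $\R^d$ depends only on the Euclidean kernel and is thus the same whether the set is viewed inside $\R^d$ or inside $\R^{d+1}$, monotonicity gives $\cp((P\cap\R^d)^{(k)})\le\cp(P^{(k)})$ on each dyadic annulus, so $P\cap\R^d$ is thin at infinity in $\R^d$. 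Setting $Q(\infty):=0$, assumption~A1 is trivial for $\R^d$, A2 holds, and the asymptotic above is exactly condition~(\ref{cond2-Q}) of Theorem~\ref{thm:general}(i) with the thin set $P\cap\R^d$ and limiting value $\nu(\R^{d+1})\le -1$. Thus $\mu_Q$ exists, and uniqueness follows from Theorem~\ref{equiv-Min-Fro}(ii).

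For compactness when $\nu(\R^{d+1})<-1$, I would invoke Theorem~\ref{thm:general}(ii): if $S_{\mu_Q}$ were unbounded, that part of the theorem would force the limit in~(\ref{beh-Q}) to equal $-1$, contradicting the strict inequality $\nu(\R^{d+1})<-1$. Hence $S_{\mu_Q}$ is bounded, and being closed, it is compact.

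The main technical obstacle is that Theorem~\ref{thm:general}(ii) is phrased under pointwise existence of the limit in~(\ref{beh-Q}), whereas Lemma~\ref{Mizu} only provides that limit outside the thin set $P\cap\R^d$. When $\dist(\supp\nu,\R^d)>0$ the convergence $|x|^sU^\nu(x)\to\nu(\R^{d+1})$ is uniform on $\R^d$ and (ii) applies immediately; in the general non-compactly supported case one must revisit the argument underlying (ii) to accommodate the exceptional thin set. The key inputs are: first, that $\R^d$ is not thin at infinity, so the q.e.\ Frostman lower inequality tested along a sequence in $\R^d\setminus (P\cap\R^d)$ tending to infinity yields $F_Q\le 0$ and, multiplied by $|x|^s$, even forces $F_Q<0$ (otherwise $1+\nu(\R^{d+1})\ge 0$ would follow, contradicting $\nu(\R^{d+1})<-1$); second, that $\mu_Q$ has finite energy, preventing $S_{\mu_Q}$ from being asymptotically concentrated in the thin set $P\cap\R^d$, so the upper Frostman inequality $U^{\mu_Q}+U^\nu\le F_Q<0$ multiplied by $|x|^s$ and evaluated along a sequence in $S_{\mu_Q}\setminus P$ tending to infinity yields the finite limit $1+\nu(\R^{d+1})$ on the left and $-\infty$ on the right, producing the desired contradiction.
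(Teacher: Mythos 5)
Your existence argument is valid but follows a different route from the paper's. The paper first replaces $\nu$ by its weak balayage $\nu^w$ onto $\R^d$, using Lemma~\ref{lem:balayage} to get $Q=U^{\nu^w}$ and $\nu^w(\R^d)=\nu(\R^{d+1})$, and then applies Lemma~\ref{Mizu} in $\R^d$ to the Jordan parts of $\nu^w$. You instead apply Lemma~\ref{Mizu} directly in the ambient space $\R^{d+1}$ to $\nu^{\pm}$ (which is legitimate since $0<s<d<d+1$) and then restrict the resulting thin set to $\R^d$; your monotonicity argument for the transfer of thinness is correct, because the $s$-capacity of a subset of $\R^d$ is the same whether computed in $\R^d$ or in $\R^{d+1}$, and the Wiener series uses the same weights $2^{-ks}$ in both spaces. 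Both routes reach condition~\eqref{cond2-Q}, and the existence conclusion is fine.

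For compactness, you have correctly spotted the real subtlety: Theorem~\ref{thm:general}(ii) assumes the limit in~\eqref{beh-Q} exists along all of $\Sigma$, while Lemma~\ref{Mizu} only furnishes it outside a thin set. But two of your intermediate claims do not hold. First, $\dist(\supp\nu,\R^d)>0$ does \emph{not} give uniform convergence of $|x|^sU^\nu(x)$ on $\R^d$; one needs $\supp\nu$ bounded as well, since mass of $\nu$ sitting far out but near the hyperplane can still produce large values of $|x|^sU^\nu(x)$ at comparable $|x|$. Second, and more seriously, ``$\mu_Q$ has finite energy, preventing $S_{\mu_Q}$ from being asymptotically concentrated in $P\cap\R^d$'' confuses two notions: finite energy only prevents $\mu_Q$ from charging sets of \emph{capacity zero}, whereas a thin-at-infinity set may have positive (even locally full) capacity in each dyadic annulus. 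So nothing prevents $S_{\mu_Q}$ from lying, beyond some radius, inside the exceptional set, and your chosen sequence in $S_{\mu_Q}\setminus P$ tending to infinity may simply fail to exist.

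The fix is simpler than what you attempt and does not need any Fatou-type estimate along $S_{\mu_Q}$. Once you have established $F_Q<0$ (your first input, which is sound: if $F_Q=0$, multiply~\eqref{Frostman1} by $|x|^s$ and pass to the limit along a sequence in $\R^d$ avoiding the thin set and the polar exceptional set, obtaining $1+\nu(\R^{d+1})\ge0$, a contradiction with $\nu(\R^{d+1})<-1$), the second Frostman inequality~\eqref{Frostman2} gives $U^{\mu_Q}(x)\le F_Q-Q(x)$ on $S_{\mu_Q}$. Since $Q(x)\to Q(\infty)=0$ on all of $\Sigma$ (this is A2, pointwise and without exceptional set), the right-hand side becomes strictly negative for $|x|$ large, while $U^{\mu_Q}\ge 0$ always. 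Hence $S_{\mu_Q}$ is bounded; no asymptotic analysis along $S_{\mu_Q}$, and in particular no need to escape the thin set, is required.
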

Indeed, we have $Q=U^{\nu}=U^{ \nu^w}$ and $\nu(\R^{d+1})=\nu^w(\R^{d})$, where $\nu^w$ denotes the weak balayage  of $\nu$ onto $\R^{d}$ (see Section \ref{preliminary} for the definition and Lemma \ref{lem:balayage} for details). Moreover (\ref{Mizu2}) can be applied to each terms in the Jordan decomposition of the signed measure $\nu^w=\nu_{+}^w-\nu_{-}^w$, and the union of two sets thin at infinity is also thin at infinity.

The situation where \textcolor{black}{$\Sigma$}, $Q$ and $\nu$ are as above,
and $\nu (\R^{d+1})= -1$, will be referred to as a {\it weakly admissible setting}, using the parallelism with the logarithmic potential scenario.
For this purpose, we denote by $y = (y_1,\ldots,y_d,y_{d+1})$ an arbitrary point in $\R^{d+1}$
so that the distance from $y$ to the conductor $\R^{d}$ equals $|y_{d+1}|$.
%In addition, for the signed measure $\nu$ the usual Jordan decomposition $\nu = \nu^+ - \nu^-$ is considered.

Our final main result is the following one.
\begin{theorem}\label{thm:weakadmiss}
Let $d-2\leq s<d$, \textcolor{black}{$\Sigma=\mathbb{R}^d$}, and $Q = U^{\nu}$, $\nu (\R^{d+1}) = -1$. If
%$\nu^+ \neq 0$ and
\begin{equation}\label{sufficRd}
\int\,|y_{d+1}|^{d-s}d\nu(y) > 0\,,
\end{equation}
then $S_{\mu_Q}$ is a compact subset of $\R^{d}$.
\end{theorem}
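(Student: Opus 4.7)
My plan is to use Corollary~\ref{cor:admiss} for the existence of $\mu_Q$ (valid in the borderline case $\nu(\R^{d+1})=-1$) and then prove compactness of $S_{\mu_Q}$ by contradiction, showing that the Frostman equality $U^{\mu_Q}+U^\nu\leq F_Q$ cannot hold on an unbounded part of the support once hypothesis~\eqref{sufficRd} forces a strictly positive next-order contribution to $U^{\mu_Q}+U^\nu$ at infinity. The key ingredient beyond the leading-order asymptotics is the Poisson-type balayage formula, which brings in the quantity $\int|y_{d+1}|^{d-s}d\nu(y)$.

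First I would identify the Robin constant. Writing $\nu^w$ for the weak balayage of $\nu$ onto $\R^d$, we have $U^\nu=U^{\nu^w}$ on $\R^d$ and $\nu^w(\R^d)=-1$. Lemma~\ref{Mizu} applied to the positive and negative parts of $\nu^w$ shows that $U^\nu(x)\to 0$ outside a set thin at infinity, so $Q(\infty)=0$ and Remark~\ref{Rem-Frost} yields $F_Q\leq 0$. Now suppose for contradiction that $S_{\mu_Q}$ is unbounded. After showing (using the finite energy of $\mu_Q$ and a capacity subadditivity argument along the lines of Remark~\ref{Rem-Frost}) that $S_{\mu_Q}$ is not thin at infinity, we may select a sequence $x_n\in S_{\mu_Q}$ with $|x_n|\to+\infty$ outside the thin exceptional sets of Lemma~\ref{Mizu} applied simultaneously to $\mu_Q$ and to $\nu^w_{\pm}$; then $|x_n|^s(U^{\mu_Q}(x_n)+U^\nu(x_n))\to 1+(-1)=0$, while~\eqref{Frostman2} gives $U^{\mu_Q}(x_n)+U^\nu(x_n)\leq F_Q$. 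This forces $F_Q=0$.

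The central step is to extract the sign-determining next-order term. The balayage of a point mass at $y\in\R^{d+1}\setminus\R^d$ onto $\R^d$ is given by the Poisson-type formula
$$d\delta_y^w(t)=\frac{c_{d,s}\,|y_{d+1}|^{d-s}}{|t-y|^{2d-s}}\,dt,\qquad t\in\R^d,$$
so $\nu^w$ is absolutely continuous with density $\rho(t)=c_{d,s}\int|y_{d+1}|^{d-s}|t-y|^{-(2d-s)}d\nu(y)$ satisfying, by~\eqref{sufficRd},
$$\lim_{|t|\to+\infty}|t|^{2d-s}\rho(t)=c_{d,s}\int|y_{d+1}|^{d-s}d\nu(y)>0.$$
Combined with $\mu_Q\geq 0$, the signed measure $\mu_Q+\nu^w$ therefore has a density bounded below by a strictly positive multiple of $|t|^{-(2d-s)}$ outside a sufficiently large ball. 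I would then couple this density lower bound with the global Frostman inequality $U^{\mu_Q+\nu^w}\geq F_Q=0$ q.e.\ on $\R^d$ and the Riesz-potential version of de La Vall\'ee-Poussin's theorem developed in Section~\ref{preliminary} to conclude the strict inequality $U^{\mu_Q}(x)+U^\nu(x)>0$ for all $x$ with $|x|$ large and outside the thin set. This contradicts the sequence found in the preceding step, so $S_{\mu_Q}$ must in fact be bounded, and therefore compact.

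The main obstacle is this last implication: converting a strictly positive density lower bound for $\mu_Q+\nu^w$ at infinity into a strict pointwise bound on its potential $U^{\mu_Q+\nu^w}$. The essentially unknown behavior of $\mu_Q$ at infinity, together with the need to respect the thin exceptional sets produced by Lemma~\ref{Mizu}, makes this transfer the delicate part of the argument. The de La Vall\'ee-Poussin-type extension to Riesz potentials flagged in the introduction is tailored precisely to this mass-to-potential comparison, and the restriction $d-2\leq s<d$ enters through the availability of that extension as well as through the balayage and thinness machinery it relies on.
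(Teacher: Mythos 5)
Your overall plan diverges from the paper's proof and, as written, leaves a genuine gap at the crucial step. The paper's proof is short and avoids contradiction entirely: using Lemma~\ref{lem:balayage}, the signed equilibrium measure of $\Sigma=\R^d$ is identified explicitly as $\eta_{Q,\R^d}=-\nu^w$, with density proportional to $-\int|y_{d+1}|^{\alpha}|x-y|^{-(2d-s)}\,d\nu(y)$. Hypothesis~\eqref{sufficRd} forces this density to be negative for $|x|$ large, so $S_{\eta_{Q,\R^d}^+}$ is compact, and Lemma~\ref{lem:inclusion}(i) (the domination $\mu_Q\leq\eta_{Q,\R^d}^+$) immediately gives compactness of $S_{\mu_Q}$. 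You correctly identified the key balayage computation and that $\int|y_{d+1}|^{\alpha}\,d\nu(y)>0$ controls the sign of the density at infinity, but you did not bring in the signed equilibrium measure or the domination lemma, which is precisely the mechanism that converts the density sign into the support conclusion.

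The structural gap is your step converting the density lower bound for $\mu_Q+\nu^w$ near infinity into a pointwise strict positivity of $U^{\mu_Q+\nu^w}$. You appeal to the de~La~Vall\'ee--Poussin extension (Theorem~\ref{dlvp}) as ``a mass-to-potential comparison'', but the direction is the opposite: Theorem~\ref{dlvp} takes a potential inequality $U^\nu\leq U^\mu+C$ as hypothesis and concludes a measure inequality $\mu\leq\nu$ on the equality set. It gives no means of deducing a lower bound on $U^{\mu_Q+\nu^w}$ from knowing that the density of $\mu_Q+\nu^w$ is eventually positive --- and indeed the potential of a measure at a given point is a global quantity, insensitive to local density sign. (Within the paper, Theorem~\ref{dlvp} is used exactly in this potential-to-measure direction to prove the inclusion $\mu_Q\leq\eta_{Q,\Sigma}^+$ of Lemma~\ref{lem:inclusion}.) A secondary, lesser gap is that your contradiction scheme needs $S_{\mu_Q}$ to be non-thin at infinity in order to extract the sequence $x_n\in S_{\mu_Q}$, and an unbounded support need not be non-thin at infinity; the paper avoids this issue entirely because the inclusion $S_{\mu_Q}\subseteq S_{\eta_{Q,\R^d}^+}$ is established without any assumption on the support of $\mu_Q$.
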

\begin{remark}\label{rem:weakadmiss}
Theorem \ref{thm:weakadmiss} extends to Riesz potentials what was established in \cite{OSW} for the logarithmic kernel in $\R^{2}$ with $\Sigma = \R$ in the weakly admissible setting. Here, when the total attractive charge of the external field equals in size the charge to be spread on $\R^{d}\,$ it is possible to get a compactly supported equilibrium measure. However, in this border case not just the sizes of the charges are important but also their distances to the conductor. This very interesting phenomenon may be illustrated with a simple example. Namely, if the external field consists just of an attractor of unit charge, then the equilibrium measure exists but its support will be unbounded (the whole conductor $\R^{d}$ in the current setting); but if this simple configuration is slightly modified by adding a negative charge $- \varepsilon$ to this attractor, \textcolor{black}{and placing a repellent of positive charge \textcolor{black}{of equal magnitude} at another point outside \textcolor{black}{the} conductor (observe that the total attractive charge remains equal to one), it is possible to place that repellent sufficiently far from the conductor so that the equilibrium measure becomes compactly supported.}
\end{remark}

With the level of generality considered so far, not much more can be said about the equilibrium measure $\mu_Q$ and its support $S_{\mu_Q}$. In Section 4 we will discuss some particular situations where the simplicity of the external field allows for a more detailed analysis.
%%%%%%%%%%%%%%%%%%
\section{Auxiliary results}
\label{preliminary}
In this section properties and auxiliary results related to Riesz energy problem are proved. Also, we recall the notions of Kelvin transform, balayage and signed equilibrium measures, which will be useful in the sequel. Throughout, it will also be convenient to introduce the parameter $\alpha$, $0<\alpha<d$, given by
$$
\alpha:=d-s.
$$
%%%%%%%%%%%%%%%%%%%
\subsection{A de La Vall\'ee Poussin theorem for Riesz kernels}
In this subsection, we give a version for Riesz potentials of the following result by
de La Vall\'ee--Poussin for \textcolor{black}{Newton} potentials, or more generally, for superharmonic functions, see \cite[p.\ 21]{DLVP}:
\begin{theorem}\label{dlvp-class}
Let $u$ and $v$ be two superharmonic functions defined in an open subset $\Omega$ of $\R^{d}$, with respective Riesz measures \textcolor{black}{$\mu=-\Delta u$ and $\nu=-\Delta v$}. Assume $u\geq v$ in $\Omega$. Then the restriction of the signed measure $\mu-\nu$ to the set
$$E=\{x\in\Omega,~
u(x)=v(x)<\infty\}$$
is a negative measure.
\end{theorem}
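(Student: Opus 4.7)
The strategy is to exploit the distributional identity $-\Delta(u-v)=\mu-\nu$ together with the pointwise information that $w:=u-v$ is non-negative on $\Omega$ and vanishes on $E$. Heuristically, $E$ consists of minima of $w$, and at an interior minimum of a smooth function the Laplacian is non-negative, so $-\Delta w\le 0$ there; the task is to promote this intuition to the possibly non-smooth, possibly infinite, superharmonic setting.

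The first move is localization: since the conclusion is measure-theoretic, it suffices to prove it on each relatively compact ball $B\Subset\Omega$. On such a ball, the Riesz representation of superharmonic functions decomposes $u$ and $v$ as the sum of the Newton (or, when $d=2$, logarithmic) potential of $\mu|_B$, respectively $\nu|_B$, plus a harmonic correction $h_u$, respectively $h_v$, on $B$. Writing $\sigma:=(\mu-\nu)|_B$, $h:=h_u-h_v$, and $\Phi^\tau$ for the Newton potential of a signed measure $\tau$ compactly supported in $B$, we obtain $w=\Phi^\sigma+h$ on $B$. The task then reduces to showing $\sigma^+(E\cap B)=0$ for the Jordan decomposition $\sigma=\sigma^+-\sigma^-$. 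Arguing by contradiction, assume $\sigma^+(E\cap B)>0$; by mutual singularity of $\sigma^\pm$ together with inner and outer regularity, extract a compact $K\subset E\cap B$ with $\sigma^+(K)>0$ and $\dist(K,\supp\sigma^-)>0$.

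Because $w\equiv 0$ on $K$, Fubini's theorem applied to any positive Borel measure $\rho$ supported on $K$ yields
\begin{equation*}
\int\Phi^{\rho}\,d\sigma^+ - \int\Phi^{\rho}\,d\sigma^- \;=\; \int\Phi^{\sigma}\,d\rho \;=\; \int(w-h)\,d\rho \;=\; -\int h\,d\rho.
\end{equation*}
Using $\sigma^+\ge\sigma^+|_K$ and the positivity of the Newton kernel, this produces the one-sided estimate $\int\Phi^\rho\,d(\sigma^+|_K)\le \int\Phi^\rho\,d\sigma^- -\int h\,d\rho$, whose right-hand side is finite and controlled by $\|h\|_{L^\infty(K)}\rho(K)$ together with the positive separation of $K$ from $\supp\sigma^-$. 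Choosing $\rho:=\sigma^+|_K$ and combining with the classical domination principle for Newton potentials applied to the equality $\Phi^{\sigma^+}+h=\Phi^{\sigma^-}$ on $K\supset\supp(\sigma^+|_K)$ promotes the relation to a global comparison whose rigidity is incompatible with the strict inequality $w>0$ off of $E$. The main obstacle I anticipate is precisely this final rigidity step: when $\sigma^+|_K$ has finite Newton energy the naive self-energy blow-up does not occur, and one must invoke the domination principle with a careful quasi-everywhere/fine-topology treatment of the polar exceptional sets. An alternative, more analytic route that sidesteps this technicality is mollification by a smooth non-negative kernel $\phi_\epsilon$: the smooth, non-negative $w_\epsilon:=u*\phi_\epsilon-v*\phi_\epsilon$ satisfies $\Delta w_\epsilon\ge 0$ near its set of approximate minima, and passing $\epsilon\to 0$ in the weak-$*$ limit recovers $(\mu-\nu)|_E\le 0$ directly.
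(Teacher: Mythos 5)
The paper does not prove Theorem~\ref{dlvp-class}: it is quoted as a classical result of de La Vall\'ee Poussin (with Fuglede and Janssen cited for related refinements). What the paper \emph{does} prove, and by a method that specializes directly to Theorem~\ref{dlvp-class}, is the Riesz-kernel analogue Theorem~\ref{dlvp}. That method is an averaging/covering argument: the map $r\mapsto L_u(x_0,r)$ is nonincreasing and absolutely continuous, and in the Newton case $\alpha=2$ satisfies $-r^{d-1}L_u'(x_0,r)=2c_{d,2}\,\mu(B(x_0,r))$, see~\eqref{rel-newton}; for $x_0\in E$ the hypotheses $u\ge v$ and $u(x_0)=v(x_0)<\infty$ give $L_u(x_0,r)-u(x_0)\ge L_v(x_0,r)-v(x_0)$, which after integration becomes $\int_0^r t^{1-d}(\mu-\nu)(B(x_0,t))\,dt\le 0$ for every small $r>0$; hence for every $r$ there is $t\in(0,r]$ with $(\mu-\nu)(B(x_0,t))\le 0$, and Lemma~\ref{sodin} (a Vitali covering argument) then delivers $(\mu-\nu)|_E\le 0$. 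No Riesz decomposition, no domination principle, and no polar-set technology are needed.

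Your proposal takes a genuinely different route, but it has gaps it does not close. The first is concrete and cannot be repaired as stated: mutual singularity of $\sigma^+$ and $\sigma^-$ produces disjoint \emph{Borel} supporting sets, not disjoint \emph{closed} supports. The closed supports $\supp\sigma^+$ and $\supp\sigma^-$ can coincide (take, on a common interval, Lebesgue measure and a singular continuous measure of full support), so there is in general no compact $K\subset E\cap B$ with $\sigma^+(K)>0$ and $\dist(K,\supp\sigma^-)>0$, and the finiteness of $\int\Phi^\rho\,d\sigma^-$ that your estimate relies on is not secured. The second gap is the ``rigidity'' step: you assert that the domination principle applied to $\Phi^{\sigma^+}+h=\Phi^{\sigma^-}$ on $K$ yields a global comparison incompatible with $w\ge 0$ off $E$, but you give no argument, you explicitly flag it as the main unresolved obstacle, and it is unclear what contradiction is being derived --- $w\ge 0$ everywhere with $w=0$ on $E$ is perfectly consistent and does not obviously clash with any domination estimate. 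The mollification alternative is also not a proof as written: $\Delta w_\epsilon\ge 0$ is guaranteed only at interior minima of $w_\epsilon$, and a point $x_0\in E$, though a minimum of $w$, is generally not a minimum of $w_\epsilon$ (typically $w_\epsilon(x_0)>0$), so ``near its set of approximate minima'' does not localize to $E$; and passing to the weak-$*$ limit of $-\Delta w_\epsilon$ \emph{restricted to $E$} requires precisely the kind of quantitative control that the averaging argument supplies and the mollification argument does not. In short, the proposal identifies the right objects but does not produce a proof; the paper's own technique for Theorem~\ref{dlvp}, restricted to $\alpha=2$, is both shorter and complete.
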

See also \cite[Theorem IV.4.5]{ST} for the logarithmic case in the complex plane. A similar result was proved by Janssen \cite[Theorem 2.5]{J} in a general potential theoretic setting, see also Fuglede \cite[Theorem 1.1]{F} for \textcolor{black}{a refinement of Theorem \ref{dlvp-class} to $\delta$-subharmonic functions}.

In this subsection, \textcolor{black}{unless explicitly stated otherwise,} it is assumed that $d-2<s<d$ or equivalently $0<\alpha<2$.

We can now state the main result of this subsection.
\begin{theorem}\label{dlvp}
%Let $\Omega$ be an open set of $\R^{d}$.
Let $\mu$ and $\nu$ be two positive measures such that the potential \textcolor{black}{$U^{\nu}$ is
finite $\nu$-a.e. (for instance $\nu$ has finite energy)}.
If for some constant $C\geq0$,
\begin{equation}\label{ineq-pot}
U^{\nu}\leq U^{\mu}+C,\quad\text{$\nu$-almost everywhere},
\end{equation}
then the restriction of the signed measure $\mu-\nu$ to the set
$$E=\{x\in\R^{d},~%U^{\mu}(x)<\infty\text{ and }
U^{\nu}(x)=U^{\mu}(x)+C<\infty\}$$
is a negative measure.
\end{theorem}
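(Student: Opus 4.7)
The strategy parallels the classical proof of Theorem \ref{dlvp-class}, but substitutes the fractional-Laplacian picture appropriate to the Riesz setting $d-2<s<d$ (so that $\alpha:=d-s\in(0,2)$). First, I would upgrade the hypothesis via the principle of domination for Riesz potentials, standard in this range: after a truncation/approximation reducing to the case where $\nu$ has finite $s$-energy, the inequality $U^{\nu}\leq U^{\mu}+C$ can be assumed to hold \emph{everywhere} on $\R^{d}$ rather than only $\nu$-a.e. Setting $f:=U^{\mu}-U^{\nu}+C$, one then has $f\geq 0$ on $\R^{d}$ and $f=0$ precisely on $E$.

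Next I would exploit the distributional identity $(-\Delta)^{\alpha/2}f=c_{\alpha}(\mu-\nu)$, which follows from $(-\Delta)^{\alpha/2}|x|^{-s}=c\,\delta_{0}$ when $\alpha=d-s\in(0,2)$. The conclusion $(\mu-\nu)|_{E}\leq 0$ then becomes the assertion that at each point $x_{0}\in E$, viewed as a global minimum of the nonnegative function $f$, the fractional Laplacian is nonpositive. Heuristically, if $f$ were smooth, the representation
$$(-\Delta)^{\alpha/2}f(x_{0})=c'_{\alpha}\,\mathrm{p.v.}\!\int_{\R^{d}}\frac{f(x_{0})-f(y)}{|x_{0}-y|^{d+\alpha}}\,dy$$
is nonpositive since $f(x_{0})-f(y)\leq 0$ for every $y\in\R^{d}$.

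To convert this heuristic into a measure-theoretic inequality I would mollify: set $\mu_{\varepsilon}=\mu\ast\varphi_{\varepsilon}$, $\nu_{\varepsilon}=\nu\ast\varphi_{\varepsilon}$, and $f_{\varepsilon}=f\ast\varphi_{\varepsilon}$. Then $f_{\varepsilon}\geq 0$ is smooth and $(-\Delta)^{\alpha/2}f_{\varepsilon}=c_{\alpha}(\mu_{\varepsilon}-\nu_{\varepsilon})$ as classical functions. For a compact $K\subset E$ and a nonnegative continuous $\phi$ supported in a small neighborhood of $K$, the pointwise minimum-principle estimate applied to $f_{\varepsilon}$ on its small-value set, followed by the joint limit $\varepsilon\to 0^{+}$ and a shrinking of $\supp\phi$ towards $K$, would yield $\int\phi\,d(\mu-\nu)\leq 0$. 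Inner regularity of $\mu$ and $\nu$ then promotes this to $\mu(A)\leq\nu(A)$ for every Borel $A\subset E$, which is precisely the desired conclusion.

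The main obstacle will be making the limiting argument rigorous, since the fractional Laplacian is nonlocal: its value at any point depends on the global behavior of $f$, so the tail contributions of $f_{\varepsilon}$ far from $K$ must be tracked carefully as $\varepsilon\to 0^{+}$. The global bound $f\geq 0$ secured in the first step is precisely what controls those tails. A cleaner alternative, which I would pursue if the direct route becomes too technical, is to appeal to the abstract $\delta$-subharmonic framework of Fuglede \cite{F} or Janssen \cite{J}, after verifying that the Riesz kernels in the range $d-2<s<d$ satisfy the continuity and domination axioms required by those general theorems.
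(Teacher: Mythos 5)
Your strategy is genuinely different from the paper's. The paper never invokes the fractional Laplacian explicitly; instead it studies the averages $L_u(x_0,r)=\int U^{\mu}\,d\hat\delta_{x_0,r}$, where $\hat\delta_{x_0,r}$ is the Riesz balayage of the point mass outside the ball $B(x_0,r)$. The key lemma is that $r\mapsto L_u(x_0,r)$ is absolutely continuous with derivative expressible through $\mu(B(x_0,t))$; comparing $L_u$ and $L_v$ at $x_0\in E$ (where $u(x_0)=v(x_0)$) and applying Bonnet's mean-value theorem produces a sequence $r_n\downarrow 0$ with $\mu(B(x_0,r_n))\le\nu(B(x_0,r_n))$, after which the Vitali-covering Lemma \ref{sodin} closes the argument. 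The first step in both proofs is the same: the domination principle upgrades \eqref{ineq-pot} to a global inequality.

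There is, however, a genuine gap in your route that the paper's averaging device is designed precisely to circumvent, and it is more serious than the "tail control" issue you flag. The heuristic $(-\Delta)^{\alpha/2}f(x_0)\le 0$ relies on $f$ attaining a global minimum $0$ at $x_0$, but the mollified function $f_\varepsilon=f\ast\varphi_\varepsilon$ does \emph{not} vanish on $E$: smoothing smears the zero set, so $x_0\in E$ is generally not even a local minimizer of $f_\varepsilon$, and the clean sign argument does not transfer. Moreover, even at $x_0\in E$ the pointwise singular integral $\mathrm{p.v.}\int(f(x_0)-f(y))|x_0-y|^{-d-\alpha}\,dy=-\int f(y)|x_0-y|^{-d-\alpha}\,dy$ need not be absolutely convergent near $x_0$ (the potential difference $f$ can vanish at $x_0$ to order lower than $\alpha$), so the pointwise fractional Laplacian may be $-\infty$ and cannot be identified directly with the measure $c_\alpha(\mu-\nu)$. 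You would need a localized, quantitative substitute for the minimum-principle inequality that survives mollification and the $\varepsilon\to 0$ limit; the paper's $L_u(x_0,r)$ achieves exactly this by replacing the ill-behaved pointwise fractional Laplacian with the everywhere-finite, absolutely continuous radial function $L_u(x_0,\cdot)$ and its derivative formula \eqref{exp-derL}--\eqref{rel-L-mu}. Your fallback appeal to the Fuglede/Janssen abstract theory is a reasonable escape route (the paper cites \cite{J,F} as related), but as stated it is an appeal rather than a proof, since you would still need to verify the domination and continuity axioms for the Riesz kernel in the relevant range.
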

\begin{remark}
Notice that the statement of Theorem \ref{dlvp-class}
%the usual de La Vall\'ee Poussin theorem
%, see \cite[p.\ 21]{DLVP}, or \cite[Theorem IV.4.5]{ST} for the logarithmic case in $\C$,
does not hold for Riesz subharmonic kernels $|x|^{-s}$ with $d-2<s<d$. Indeed, considering the equilibrium measure $\omega$ of the unit ball $B\subset\R^{d}$, we know that
$$
U^{\omega}(x)=W(B),\qquad x\in B,
$$
where $W(B)$ denotes the Riesz energy of $B$. Applying Theorem \ref{dlvp-class}
%the usual statement (where the constant $C$ can be negative) of the de La Vall\'ee Poussin theorem
with $U^{\omega}$ and the constant function equal to $W(B)$, the inequality $W(B)\leq U^{\omega}(x)$
would imply that $\omega\leq0$ on any open subset $\Omega$ of $B$, which is not true as the support of $\omega$ is the entire ball $B$ when $d-2<s<d$.
\end{remark}
Let us first recall the notion of {\em balayage}, see \cite[Chapter IV, \S 5]{Land}, in particular p.264. Given
%$d-2 \leq s <d$,
a closed set $F\subset \R^d$ of positive capacity and a \textcolor{black}{positive} measure $\sigma$ \textcolor{black}{of finite total mass}, 
%satisfying (\ref{cond-inf}), 
there exists a unique \textcolor{black}{positive} measure
$$\widehat{\sigma}:=
Bal(\sigma,F)$$
called the Riesz $s$-balayage of $\sigma$ onto $F$ satisfying the following: \\
$\bullet$ $S_{\widehat{\sigma}} \subseteq F$, \\
$\bullet$ $\widehat{\sigma}$ is zero on the set of irregular points of \textcolor{black}{the complement of $F$}, and
\begin{equation}\label{balayage}
U^{\widehat{\sigma}}(x) = U^{\sigma}(x)\text{ q.e.\ on }F,\qquad U^{\widehat{\sigma}}(x) \leq U^{\sigma}(x)\text{ on }\R^d.
\end{equation}
\textcolor{black}{We caution the reader that these properties are different from those for the logarithmic case when $d=2$ and $s=0$. In particular,} the \textcolor{black}{logarithmic} balayage preserves the total mass, but this is not true for general $d-2 \leq s < d$.
\textcolor{black}{For positive $s$,} $\|\widehat{\sigma}\| = \|\sigma\|$ for all measures $\sigma$ if and only if $F$ is not thin at infinity, see \cite[Theorem 3.22]{FZ}, \cite[Corollary 5.3]{Z3}.
%
%If $d=2$ and $s=0$, the usual logarithmic case, the balayage $\widehat{\sigma}_0$ preserves the total mass: $\|\widehat{\sigma}_0\| = \|\sigma\|$, but this is not true for general $d-2 \leq s < d$.
%Actually, $\|\widehat{\sigma}\| = \|\sigma\|$ for all measures $\sigma$ if and only if $F$ is not thin at infinity, see \cite[Theorem 3.22]{FZ}, \cite[Corollary 5.3]{Z3}.
%In the logarithmic case, \eqref{balayage} is modified in the sense that
%\begin{equation*}%\label{logbalayage}
%U^{\widehat{\sigma}_{0}}(z) = U^{\sigma}(z) + C\quad \text{q.e. on }F,\qquad
%U^{\widehat{\sigma}_{0}}(z) \leq U^{\sigma}(z) + C\;\;\text{on }\C,
%\end{equation*}
%where $C=0$ if, for example, $\C \setminus F$ is a bounded set.

We remark that for $0<s<d-2$ we shall utilize a measure $\sigma^w$, which we call {\em weak balayage}, for which only the first part of \eqref{balayage} holds true, namely $U^{\sigma^w}(x)=U^\sigma (x)$ q.e. on $F$. For a signed measure $\nu$, we define $\nu^w :=(\nu^+)^w-(\nu^-)^w$ , where $\nu=\nu^+-\nu^-$ is the Jordan decomposition of $\nu$.

Now we consider the family of balayages $\hat\delta_{0,r}$, $r>0$, of $\delta_{0}$ (the unit point mass at $x=0$) outside of the balls $|x|<r$. The balayage $\hat\delta_{0,r}$ is a probability measure, \textcolor{black}{absolutely continuous with respect to the Lebesgue measure, 
see \cite[p.265]{Land}},
supported on the complement of $B(0,r)$, \textcolor{black}{the open ball with center at $0$ and radius $r$}. Its
potential satisfies %(cf. \cite{Land} p.\ 112)
$$
U^{\hat\delta_{0,r}}(x)=
\begin{cases}
U^{\delta_{0}}(x)={|x|^{-s}},\quad |x|\geq r,\\[10pt]
U^{\hat\delta_{0,r}}(x)\leq{|x|^{-s}},\quad |x|\leq r.
\end{cases}
$$
For $x_{0}\in\R^{d}$, we set $\hat\delta_{x_0,r}(x):=\hat\delta_{0,r}(x-x_{0})$ and, for
$u$ a potential, we will consider its average with respect to $\hat\delta_{x_0,r}$:
$$
L_{u}(x_{0},r)=\int u(x)d\hat\delta_{x_0,r}(x),\qquad r>0.
$$
Note that, when \textcolor{black}{$u(x)=U^{\mu}(x)$} is the potential of a \textcolor{black}{(positive)} measure $\mu$, 
$L_{u}(x_{0},r)$ is a finite number. Indeed,
$$
L_{u}(x_{0},r)=\int u(x)d\hat\delta_{x_0,r}(x)=\int U^{\hat\delta_{x_0,r}}(x)d\mu(x),
$$
and for $0<r_{1}<r$, we have
\begin{equation*}
\int_{r_{1} \leq|x-x_{0}|} U^{\hat\delta_{x_0,r}}(x)d\mu(x)  \leq
\int_{r_{1}\leq|x-x_{0}|} \frac{d\mu(x)}{|x-x_{0}|^{s}}<\infty,
\end{equation*}
where the last inequality follows from (\ref{cond-inf}), and
\begin{equation*}
\int_{|x-x_{0}|<r_{1}} U^{\hat\delta_{x_0,r}}(x)d\mu(x)
%= \int_{|x-x_{0}|<r_{1}} \int\frac{d\hat\delta_{x_0,r}(y)}{|x-y|^{s}}d\mu(x)
\leq
\int_{|x-x_{0}|<r_{1}} \frac{d\mu(x)}{|r-r_{1}|^{s}}<\infty,
\end{equation*}
where the first inequality uses the fact that the mass of $\hat\delta_{x_0,r}$ is outside of the ball $B(x_{0},r)$.

For the proof of Theorem \ref{dlvp}, we will need several lemmas.
\begin{lemma}
Let  $x_{0}\in\R^{d}$ and %$u$ be an $\alpha$-superharmonic function,
$$
\textcolor{black}{u(x):=U^{\mu}(x)},
$$
with $\mu$ a positive measure. The function
$r\to L_{u}(x_{0},r)$ satisfies
\begin{equation}\label{lim-L}
\lim_{r\to0}L_{u}(x_{0},r)= u(x_{0}),
\end{equation}
and is non-increasing with $r$.

%Assume moreover that $u(x_{0})<\infty$ (which is satisfied almost everywhere in $\R^{d}$). Then
Moreover, the function $r\mapsto L_{u}(x_{0},r)$ is absolutely continuous on any closed subinterval of $(0,\infty)$ and, almost everywhere, one has
\begin{equation}\label{exp-derL}
L_{u}'(x_{0},r)=-\frac{2c_{d,\alpha}}{r^{d-1}}\int_{|x-x_{0}|\leq r}(r^{2}-|x-x_{0}|^{2})^{\alpha/2-1}d\mu(x),
\end{equation}
where
\begin{equation}\label{const-c}
c_{d,\alpha}=\frac{\Gamma(d/2)}{2^{\alpha}\pi^{d/2}\Gamma^{2}(\alpha/2)}.
\end{equation}
\end{lemma}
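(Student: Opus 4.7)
By translation invariance we may assume $x_0=0$; set $V_r(x):=U^{\hat\delta_{0,r}}(x)$, so that Fubini gives $L_u(0,r)=\int V_r(x)\,d\mu(x)$.

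\emph{Monotonicity and the limit \eqref{lim-L}.} For $r_1<r_2$ the measure $\hat\delta_{0,r_2}$ coincides with the balayage of $\hat\delta_{0,r_1}$ onto $\{|x|\geq r_2\}$: both are supported there, and their potentials agree with $|x|^{-s}$ quasi-everywhere on that set, so uniqueness of balayage identifies them. The balayage inequality in \eqref{balayage} then yields $V_{r_2}(y)\leq V_{r_1}(y)$ pointwise on $\R^d$, so $r\mapsto L_u(0,r)$ is non-increasing. Moreover, for each $y\neq 0$ one has $V_r(y)=|y|^{-s}$ as soon as $r\leq|y|$, hence $V_r(y)\nearrow|y|^{-s}$ as $r\searrow 0^+$; monotone convergence then yields \eqref{lim-L}.

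\emph{Derivative formula.} The decisive step is the explicit identity
\[
V_r(x)=|x|^{-s}-2c_{d,\alpha}\int_{|x|}^{r}\frac{(t^2-|x|^2)^{\alpha/2-1}}{t^{d-1}}\,dt,\qquad |x|<r,
\]
complementing $V_r(x)=|x|^{-s}$ for $|x|\geq r$. One may derive it from the fractional Poisson representation of $V_r$ on $B(0,r)$, or equivalently by writing the explicit density of $\hat\delta_{0,r}$ on $\{|y|>r\}$ (the Riesz--Poisson kernel of the exterior problem) and evaluating the resulting integral via a radial change of variables together with a Beta-function identity; this simultaneously fixes the constant $c_{d,\alpha}$. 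Granted this identity, for $r_1<r_2$ one has $V_{r_1}(x)-V_{r_2}(x)=0$ when $|x|\geq r_2$, while
\[
V_{r_1}(x)-V_{r_2}(x)=2c_{d,\alpha}\int_{\max(|x|,r_1)}^{r_2}\frac{(t^2-|x|^2)^{\alpha/2-1}}{t^{d-1}}\,dt\;\geq\;0
\]
otherwise (note the continuity $V_r(x)\to r^{-s}$ as $|x|\to r^-$, which avoids any boundary contribution). Integrating against $d\mu$ and applying Fubini yields
\[
L_u(0,r_1)-L_u(0,r_2)=\int_{r_1}^{r_2}\frac{2c_{d,\alpha}}{t^{d-1}}\int_{|x|\leq t}(t^2-|x|^2)^{\alpha/2-1}\,d\mu(x)\,dt.
\]
This exhibits $L_u(0,\cdot)$ as an indefinite integral on any compact subinterval of $(0,\infty)$, hence absolutely continuous, with a.e.\ derivative given by \eqref{exp-derL}.

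\emph{Main obstacle.} The central difficulty is the explicit integral representation of $V_r$ inside the ball and the correct identification of the constant $c_{d,\alpha}$: this is the Riesz analogue of the classical Poisson formula, elementary but delicate, reducible to a single Beta-function evaluation after rescaling. The Fubini applications above require local integrability in $r$ of the inner kernel, which follows from the integrable singularity $(t-|x|)^{\alpha/2-1}$ at $t=|x|$ (since $\alpha/2-1>-1$) combined with condition \eqref{cond-inf} controlling the behavior of $\mu$ at infinity.
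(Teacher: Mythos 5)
Your proof is correct and follows essentially the same route as the paper's: the crucial identity $V_r(x)=|x|^{-s}-2c_{d,\alpha}\int_{|x|}^{r}(t^2-|x|^2)^{\alpha/2-1}t^{1-d}\,dt$ inside the ball (which the paper obtains from the Green-function formula of the ball via the substitution $t=|x|\sqrt{s+1}$) followed by Tonelli to convert $L_u(0,r_1)-L_u(0,r_2)$ into an indefinite integral in $r$. The only substantive difference is that you give a self-contained argument for the monotonicity of $r\mapsto L_u(x_0,r)$ and the limit as $r\to 0$, using transitivity of balayage plus monotone convergence, whereas the paper simply cites Landkof for these two facts; the derivative-formula computation itself is the same.
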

\begin{remark}
Note that (\ref{exp-derL}) is finite for almost every $r>0$. Note also that in the \textcolor{black}{Newton} case $\alpha=2$, (\ref{exp-derL})
simplifies to
\begin{equation}\label{rel-newton}
-r^{d-1}L_{u}'(x_{0},r) = 2c_{d,2}{\mu(B(x_{0},r))}.
\end{equation}
In view of \textcolor{black}{\eqref{lim-L}} we will define $L_{u}(x_{0},0)$ as $u(x_{0})$.
\end{remark}
\begin{proof}
Equality (\ref{lim-L}) and the fact that $r\mapsto L_{u}(x_{0},r)$ is non-increasing are shown in \cite{Land} p.\ 114 and p.\ 126 respectively.
%Hence it is differentiable a.e.
%Let us assume it is differentiable at $x_{0}=0$ and let us compute the derivative at this point.

In the sequel, we choose $x_{0}=0$, the proof being identical at any other point $x_{0}\in\R^{d}$.
Let $r_{1}$ be a fixed positive number with $0<r_{1}<r$. We write
\begin{align}
L(r) -L(r_{1})& :=L_{u}(0,r)-L_{u}(0,r_{1})\notag
%\\
%& =\int u(x)d(\eps_{\alpha}^{(r)}-\eps_{\alpha}^{(r_{1})})(x)
%=\int (U^{\epsilon_{\alpha}^{(r)}}(x)-U^{\epsilon_{\alpha}^{(r_{1})}}(x))d\mu(x)\notag
\\[10pt]
& = \int_{|x|\leq r_{1}}(U^{\hat\delta_{0,r}}(x)-U^{\hat\delta_{0,r_{1}}}(x))d\mu(x)+\int_{r_{1}<|x|\leq r}(U^{\hat\delta_{0,r}}(x)-U^{\delta_{0}}(x))d\mu(x).\label{exp-L}
%\\
%& = \int_{|x|\leq r}(U^{\hat\delta_{0,r}}(x)-U^{\delta_{0}}(x))d\mu(x)+u(0).
%\label{exp-L}
\end{align}
%We assume $U^{\mu}(0)<\infty$, it is independent of $r$.
To reexpress the last integrals, we will use the following expression for the Green function of the ball of radius $r$, see e.g. \cite[Theorem 3.1]{Buc}
$$
G_{r}(x,y)=U^{\delta_{y}}(x)-U^{\hat\delta_{y,r}}(x)=
\frac{c_{d,\alpha}}{|x-y|^{d-\alpha}}\int_{0}^{a}s^{\alpha/2-1}(1+s)^{-d/2}ds,\qquad
x,y\in B(0,r),
$$
where $c_{d,\alpha}$ is the constant (\ref{const-c}) and
$$a=\frac{(r^{2}-|x|^{2})(r^{2}-|y|^{2})}{r^{2}|x-y|^{2}}.
$$
When $y=0$, we get
\begin{equation*}%\label{green}
G_{r}(x,0)=U^{\delta_{0}}(x)-U^{\hat\delta_{0,r}}(x)=
\frac{c_{d,\alpha}}{|x|^{d-\alpha}}\int_{0}^{a}s^{\alpha/2-1}(1+s)^{-d/2}ds,\qquad
a=\frac{r^{2}-|x|^{2}}{|x|^{2}}.
\end{equation*}
Making use of the above expression, (\ref{exp-L}) becomes
\begin{multline*}
-\int_{|x|\leq r_{1}} \frac{c_{d,\alpha}}{|x|^{d-\alpha}}
\int_{a_{1}}^{a}s^{\alpha/2-1}(1+s)^{-d/2}ds~d\mu(x)
\\
-\int_{r_{1}<|x|\leq r} \frac{c_{d,\alpha}}{|x|^{d-\alpha}}
\int_{0}^{a}s^{\alpha/2-1}(1+s)^{-d/2}ds~d\mu(x).
%\\
%=-\int_{t=0}^{r} \frac{c_{d,\alpha}}{t^{d-\alpha}}
%\int_{0}^{r^{2}/t^{2}-1}s^{\alpha/2-1}(1+s)^{-d/2}ds~G(t)dt.
\end{multline*}
Performing the change of variable $t=|x|\sqrt{s+1}$, we get
\begin{multline*}
L(r) -L(r_{1}) = -2c_{d,\alpha}\int_{|x|\leq r_{1}}\left(\int_{t=r_{1}}^{r} {(t^{2}-|x|^{2})^{\alpha/2-1}}{t^{1-d}}dt\right)d\mu(x)
\\
-2c_{d,\alpha}\int_{r_{1}<|x|\leq r}\left(\int_{t=|x|}^{r} {(t^{2}-|x|^{2})^{\alpha/2-1}}{t^{1-d}}dt\right)d\mu(x).
\end{multline*}
Since the integrand in the double integrals is positive, we may interchange the order of integrations which leads to
%rewrite the previous expression as
$$
L(r) -L(r_{1})= -2c_{d,\alpha}\int_{t=r_{1}}^{r}\left(\int_{|x|=0}^{t}{(t^{2}-|x|^{2})^{\alpha/2-1}}{t^{1-d}}d\mu(x)\right)dt.
$$
Finally, since both $L(r)$ and $L(r_{1})$ are finite numbers, the double integral is also finite which implies, by Fubini's Theorem, that the function
$$
t\mapsto\int_{|x|=0}^{t}{(t^{2}-|x|^{2})^{\alpha/2-1}}{t^{1-d}}d\mu(x)
$$
is integrable. Hence, $L(r)-L(r_{1})$ is the indefinite integral of an integrable function, from which follows that the function $r\mapsto L(r)$ is absolutely continuous, differentiable almost everywhere, with derivative equal to
$$
-2c_{d,\alpha}r^{1-d}\int_{|x|=0}^{r}{(r^{2}-|x|^{2})^{\alpha/2-1}}d\mu(x).
$$
\end{proof}
The following lemma is a consequence of the previous one.
%%%%%%%%%%%%%%%%
\begin{lemma}
Assume $L_{u}(x_{0},r)$ is differentiable at $r>0$. Then,
\begin{equation}\label{rel-L-mu}
-r^{d}L_{u}'(x_{0},r)=2c_{d,\alpha}\frac{d}{dr}\int_{t=0}^{r}t(r^{2}-t^{2})^{\alpha/2-1}\mu(B(x_{0},t))dt.
\end{equation}
\end{lemma}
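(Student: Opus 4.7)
The plan is to begin from formula (\ref{exp-derL}) in the preceding lemma. Multiplying it by $-r^d$ yields
$$-r^d L_u'(x_0,r) = 2c_{d,\alpha}\,r\,G(r),\qquad G(r):=\int_{|x-x_0|\leq r}(r^2-|x-x_0|^2)^{\alpha/2-1}\,d\mu(x),$$
so that the identity to prove reduces to
$$r\,G(r) = \frac{d}{dr}\int_0^r t(r^2-t^2)^{\alpha/2-1}\mu(B(x_0,t))\,dt.$$

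For the right-hand side, I would write $\mu(B(x_0,t)) = \int \mathbf{1}_{|y-x_0|<t}\,d\mu(y)$ and apply Tonelli's theorem (legitimate because the integrand is nonnegative for $0<t<r$ and $\alpha/2>0$) to swap the order of integration. The resulting inner integral is computed by the substitution $u=r^2-t^2$ to give
$$\int_{|y-x_0|}^r t(r^2-t^2)^{\alpha/2-1}\,dt = \frac{1}{\alpha}(r^2-|y-x_0|^2)^{\alpha/2},$$
and hence
$$\int_0^r t(r^2-t^2)^{\alpha/2-1}\mu(B(x_0,t))\,dt = \frac{1}{\alpha}H(r),\qquad H(r):=\int(r^2-|y-x_0|^2)_+^{\alpha/2}\,d\mu(y).$$

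It remains to show that $H'(r) = \alpha r\,G(r)$ at every $r$ where $L_u(x_0,\cdot)$ is differentiable. This is the main obstacle: since $(r^2-|y-x_0|^2)^{\alpha/2-1}$ is not integrable near the sphere $|y-x_0|=r$ when $\alpha<2$, one cannot naively differentiate under the integral sign. To circumvent this, I would use the elementary identity $(r^2-|y-x_0|^2)_+^{\alpha/2} = \alpha\int_0^r s(s^2-|y-x_0|^2)_+^{\alpha/2-1}\,ds$ (fundamental theorem of calculus in $s$) and apply Tonelli once more to obtain
$$H(r) = \alpha\int_0^r s\,G(s)\,ds.$$
The local integrability of $s\mapsto sG(s)$ on $(0,\infty)$ is precisely the Fubini-based integrability already established in the proof of the preceding lemma, so $H$ is absolutely continuous. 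Combining the Lebesgue differentiation theorem with (\ref{exp-derL}) then gives $H'(r) = \alpha r\,G(r)$ at every $r$ where $L_u(x_0,\cdot)$ is differentiable, which completes the argument.
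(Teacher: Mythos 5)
Your proof is correct and follows essentially the same route as the paper: both swap the order of integration via Fubini/Tonelli to rewrite $\int_0^r t(r^2-t^2)^{\alpha/2-1}\mu(B(x_0,t))\,dt$ as $\frac{1}{\alpha}\int(r^2-|x-x_0|^2)_+^{\alpha/2}\,d\mu(x)$ and then differentiate in $r$, matching the result against (\ref{exp-derL}). Your extra step of justifying the differentiation (via the representation $H(r)=\alpha\int_0^r sG(s)\,ds$ and absolute continuity of $H$) is a welcome refinement rather than a different argument, since the paper's proof passes over that last interchange of $d/dr$ and the integral without comment.
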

\begin{remark}
Note that in the \textcolor{black}{Newton} case $\alpha=2$, (\ref{rel-L-mu}) just gives (\ref{rel-newton}) again.
%simplifies to
%\begin{equation*}%\label{rel-newton}
%-r^{d-1}L_{u}'(x_{0},r) = 2c_{d,2}{\mu(B(x_{0},r))}.
%\end{equation*}
\end{remark}
\begin{proof}
We have
\begin{align*}
\int_{t=0}^{r}t(r^{2}-t^{2})^{\alpha/2-1}\int_{|x-x_{0}|=0}^{t}d\mu(x)dt
& = \int_{|x-x_{0}|=0}^{r}\int_{t=|x-x_{0}|}^{r}t(r^{2}-t^{2})^{\alpha/2-1}dtd\mu(x)
 \\[10pt]
& =  \frac12\int_{|x-x_{0}|=0}^{r}(r^{2}-|x-x_{0}|^{2})^{\alpha/2}\int_{s=0}^{1}(1-s)^{\alpha/2-1}ds
d\mu(x)
\\[10pt]
& = \frac1\alpha\int_{|x-x_{0}|=0}^{r}(r^{2}-|x-x_{0}|^{2})^{\alpha/2}d\mu(x),
\end{align*}
where the variable $s$ in the second equality is such that $t^{2}-|x-x_{0}|^{2}=s(r^{2}-|x-x_{0}|^{2})$. Hence, the right-hand side of (\ref{rel-L-mu}) is equal to
\begin{align*}
\frac{2c_{d,\alpha}}{\alpha}\frac{d}{dr}\int_{|x-x_{0}|=0}^{r}(r^{2}-|x-x_{0}|^{2})^{\alpha/2}d\mu(x)
& =2c_{d,\alpha}\int_{|x-x_{0}|=0}^{r}r(r^{2}-|x-x_{0}|^{2})^{\alpha/2-1}d\mu(x)
\\[10pt]
& =-{r^{d}}L_{u}'(x_{0},r).
\end{align*}
\end{proof}
Finally we recall a result from measure theory. Its proof uses the Vitali covering theorem, see \cite{Sodin} for details.
\begin{lemma}\label{sodin}
Let $A$ be a Borel set, and let $\nu$ be a signed measure in $\R^{d}$. Suppose that for each $x\in A$ there is a sequence $t_{n}\downarrow0$ with $\nu(B(x,t_{n}))\geq0$. Then $\nu|_{A}$ is a non-negative measure.
\end{lemma}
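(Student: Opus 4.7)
The plan is to reduce the claim, via the Jordan--Hahn decomposition, to showing that the negative part $\nu^{-}$ puts zero mass on $A$, and then to extract from the hypothesized ``good'' balls a Vitali-type disjoint family which bounds this mass by an arbitrarily small quantity.

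First I would write $\nu = \nu^{+} - \nu^{-}$ and note that $\nu|_{A}$ being a non-negative measure is equivalent to $\nu^{-}(A) = 0$. Choosing a Hahn set $N$ carrying $\nu^{-}$ with $\nu^{+}(N) = 0$, I set $A' := A \cap N$, so that $\nu^{-}(A') = \nu^{-}(A)$ and $\nu^{+}(A') = 0$. The hypothesis is inherited on $A'$: for every $x \in A'$ there is a sequence $t_n \downarrow 0$ with $\nu(B(x,t_n)) \geq 0$.

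Next, fix $\varepsilon > 0$. Using outer regularity of the locally finite Radon measure $\nu^{+}$ together with $\nu^{+}(A') = 0$, I would enclose $A'$ in an open set $V$ with $\nu^{+}(V) < \varepsilon$. For each $x \in A'$, the balls $B(x,t_n(x))$ eventually lie in $V$, so the family
$$\mathcal{F} := \{B(x,t) : x \in A',\; B(x,t) \subset V,\; \nu(B(x,t)) \geq 0\}$$
is a Vitali cover of $A'$ (each point of $A'$ is the center of balls in $\mathcal{F}$ of arbitrarily small radius).

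The crux is a Vitali covering extraction for the Radon measure $\nu^{-}$: invoke the Vitali covering theorem in the form valid for arbitrary Radon measures on $\R^{d}$ (which follows from the Besicovitch covering theorem plus a pigeonhole step) to produce a countable disjoint subfamily $\{B_i\} \subset \mathcal{F}$ with $\nu^{-}(A' \setminus \bigcup_i B_i) = 0$. Then, since each $\nu(B_i) \geq 0$ and the $B_i$ are disjoint,
$$\nu^{-}(A') \;\leq\; \sum_i \nu^{-}(B_i) \;\leq\; \sum_i \nu^{+}(B_i) \;=\; \nu^{+}\Bigl(\bigcup_i B_i\Bigr) \;\leq\; \nu^{+}(V) \;<\; \varepsilon,$$
and letting $\varepsilon \to 0$ gives $\nu^{-}(A) = 0$, which is the claim. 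The main obstacle is precisely this Vitali extraction, delicate for a general (possibly non-doubling) Radon measure; a secondary bookkeeping issue is that $\nu$ need not be globally finite, which I would handle by first restricting to a bounded Borel piece of $A$ and then exhausting $\R^{d}$ by balls.
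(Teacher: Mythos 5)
The paper does not give a proof of this lemma; it cites Sodin and notes only that ``its proof uses the Vitali covering theorem.'' Your blind proposal fills in the details along exactly those lines (Jordan--Hahn decomposition, outer regularity, Besicovitch--Vitali extraction), and the overall scheme is sound: reduce to showing $\nu^{-}(A)=0$, sandwich $A'=A\cap N$ inside an open set of small $\nu^{+}$-measure, extract a disjoint subfamily of ``good'' balls, and read off $\nu^{-}(A')\leq\nu^{+}(V)<\varepsilon$.

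There is one genuine technical wrinkle you should address. The Besicovitch--Vitali covering theorem for general Radon measures (Mattila, Thm.~2.8) is stated for \emph{closed} balls, and this is essential to the iterative pigeonhole step: after removing a finite batch of chosen balls, the remaining points of $A$ must lie at positive distance from the union of the balls already selected, so that small balls around them can still be chosen disjoint; this fails for open balls, since a remaining point may sit on the boundary of a selected ball. The hypothesis of the lemma, however, is $\nu(B(x,t_n))\geq 0$ with (open) balls, and passing from $\nu(B(x,t_n))\geq 0$ to $\nu(\bar B(x,s_n))\geq 0$ for some $s_n\downarrow 0$ is not automatic (one can write down a right-continuous function $r\mapsto\nu(\bar B(x,r))$ that is strictly negative on $(0,t_1)$ yet has left limits $0$ along $t_n$). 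Two standard ways around this: (a) argue via the Besicovitch \emph{differentiation} theorem instead --- since $\nu^{+}\perp\nu^{-}$, for $\nu^{-}$-a.e.\ $x$ one has $\nu^{+}(B(x,r))/\nu^{-}(B(x,r))\to 0$ as $r\to 0^{+}$ (open or closed balls give the same limit since all but countably many spheres are $|\nu|$-null), so $\nu(B(x,r))<0$ for all small $r$, contradicting the hypothesis at any $x\in A$ with $x\in\mathrm{supp}\,\nu^{-}$; this gives $\nu^{-}(A)=0$ directly and bypasses the covering extraction; or (b) keep the covering-theorem route but, at each pigeonhole step, remove the \emph{closures} of the chosen open balls and track the boundary contribution, which can be absorbed into the $\varepsilon$ bookkeeping. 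Aside from this point, your reduction to the Hahn set, the use of outer regularity to produce $V$, the disjointness estimate $\nu^{-}(B_i)\leq\nu^{+}(B_i)$, and the exhaustion remark to dispose of global finiteness are all correct and match the approach the paper attributes to Sodin.
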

%%%%%%%%%%%%%%
%\begin{remark}
%Since $U^{\mu}$ and $U^{\nu}$ are superharmonic functions in the newtonian sense, the classical version of the above theorem can also be used, showing that $-\Delta U^{\mu}+\Delta U^{\nu}$ $(\neq\mu-\nu)$ is also a negative measure on $E$.
%\end{remark}
\begin{proof}[Proof of Theorem \ref{dlvp}]
From the domination principle, see \cite[p.\ 115]{Land}, we know that (\ref{ineq-pot}) actually holds everywhere on $\R^{d}$.
Hence, with $u=U^{\mu}$ and $v=U^{\nu}$, and for $R>0$,
$$L_{v}(x,R)\leq L_{u}(x,R)+C,\qquad x\in\R^{d}.$$
 Consider a point $x_{0}\in E$.
Then $v(x_{0})=u(x_{0})+C<\infty$ and thus
$$
L_{v}(x_{0},R)-v(x_{0})\leq L_{u}(x_{0},R)-u(x_{0}).
$$
Since $r\mapsto L_{v}(x_{0},r)$ is an absolutely continuous function, we have, for any $\eps>0$,
$$
\int_{\eps}^{R}L_{v}'(x_{0},r)dr=L_{v}(x_{0},R)-L_{v}(x_{0},\eps).
$$
From the non-positivity of $L_{v}'(x_{0},r)$ and the monotone convergence theorem, we get by letting $\eps$ tend to 0,
$$
\int_{0}^{R}L_{v}'(x_{0},r)dr=L_{v}(x_{0},R)-{v}(x_{0}),
$$
and the same holds true for the function $L_{u}$.
Consequently,
$$
\int_{0}^{R}L_{v}'(x_{0},r)dr\leq\int_{0}^{R}L_{u}'(x_{0},r)dr.
$$
Next, considering equation (\ref{rel-L-mu}) applied to $u$ and $v$, taking the difference between the two and then the antiderivative, we get
\begin{equation}\label{rel-antider}
-\int_{r=0}^{R}r^{d}L_{u-v}'(x_{0},r)dr=2c_{d,\alpha}\int_{t=0}^{R}t(R^{2}-t^{2})^{\alpha/2-1}(\mu-\nu)(B(x_{0},t))dt,\quad R>0.
\end{equation}
In the sequel, we fix an $R>0$ and show that there exists an $0<r\leq R$ for which the inequality $\mu(B(x_{0},r))\leq\nu(B(x_{0},r))$ holds true. We proceed by contradiction. Assume that, for all $0<r\leq R$, we have $\mu(B(x_{0},r))>\nu(B(x_{0},r))$. Then, the right-hand side of (\ref{rel-antider}) is a positive number. On the other hand, consider a fixed $\delta>0$. For all $\eps>0$ sufficiently small, we have
$$
%\int_{\eps}^{R}L_{v}'(x_{0},r)dr
-\delta\leq\int_{\eps}^{R}L_{u-v}'(x_{0},r)dr.
$$
Then, for some $R'\in(0,R]$,
$$
-\delta\leq\int_{\eps}^{R}L_{u-v}'(x_{0},r)dr=\int_{\eps}^{R}r^{-d}(r^{d}L_{u-v}'(x_{0},r))dr
=\eps^{-d}\int_{\eps}^{R'}r^{d}L_{u-v}'(x_{0},r)dr,
$$
where in the last equality we have used the Bonnet's mean value theorem for integrals, cf.\ \cite[Section 12.6]{B} (note that $r^{-d}$ is a positive decreasing function). Observing that
$$
0=\lim_{\eps\to0}(-\delta\eps^{d})\leq\lim_{\eps\to0}\int_{\eps}^{R'}r^{d}L_{u-v}'(x_{0},r)dr=\int_{0}^{R'}r^{d}L_{u-v}'(x_{0},r)dr,
$$
we get a contradiction since by (\ref{rel-antider}), the above integral
 is a negative number.

Repeating the argument with a smaller value of $R$, we get a sequence $r_{n}\downarrow0$ such that $\mu(B(x_{0},r_{n}))\leq\nu(B(x_{0},r_{n}))$. It then suffices to apply Lemma \ref{sodin} to conclude the proof.
%and to notice that the subset of points $x\in E$ such that $U^{\mu}(x)=\infty$ is of $\mu$-measure 0.
\end{proof}
%%%%%%%%%%%%%%%%%%%%%%%%
\subsection{Monotonicity and continuity of the equilibrium constant}
In this section, we always assume that a minimizing measure exists \textcolor{black}{(see sufficient and necessary conditions for existence in Theorem 2.3)}.
We derive monotonicity and continuity of the equilibrium constant with respect to the external field. This result, which may be of independent interest, is an analog of \cite[Corollary I.4.2]{ST} for the logarithmic kernel. It will be used in the proof Theorem \ref{thm:general}.

Let us start with a lemma.
\begin{lemma}%\label{lem:inf}
For $d-2\leq s< d$, \textcolor{black}{and $\nu$ a probability measure on $\R^{d}$},
the following inequality holds
\begin{equation}\label{Robin-ineq}
\textcolor{black}{``\displaystyle{\inf_{x\in S_{\mu_Q}}}"(U^{\nu}(x)+Q(x))\leq F_{Q}}, %\quad\forall\nu\in\PP(\Sigma),
\end{equation}
where $\textcolor{black}{``\inf"}$ denotes the essential inf in the sense of \cite[p.\ 43]{ST}, \textcolor{black}{that is, $\textcolor{black}{``\inf_{x\in H}"}h(x)$ denotes the largest number $L$ such that on $H$ the real function $h$ takes values smaller than $L$ only on a set of zero capacity.}
\end{lemma}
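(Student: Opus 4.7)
The plan is to argue by contradiction, combining the Frostman inequality~(\ref{Frostman2}) with the domination (Maria's) principle for Riesz $s$-potentials—available precisely in the range $d-2\leq s<d$—together with the asymptotic vanishing result of Lemma~\ref{Mizu}. First I would suppose, toward a contradiction, that $``\displaystyle{\inf_{x\in S_{\mu_Q}}}"(U^\nu+Q)>F_Q$, so that for some $\varepsilon>0$ one has
\[
U^\nu(x)+Q(x)\;\geq\;F_Q+\varepsilon\quad\text{q.e.\ on }S_{\mu_Q}.
\]
Combining this with the Frostman upper bound $U^{\mu_Q}+Q\leq F_Q$ on $S_{\mu_Q}$ from~(\ref{Frostman2}) and subtracting, I would obtain $U^{\mu_Q}+\varepsilon\leq U^\nu$ q.e.\ on $S_{\mu_Q}$. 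Since $\mu_Q$ has finite energy (as $W_Q<+\infty$), it assigns no mass to sets of zero capacity, so the inequality holds $\mu_Q$-almost everywhere.

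The pivotal step is then to invoke the domination principle for Riesz $s$-potentials, cf.\ \cite[p.\ 115]{Land}: given that $\mu_Q$ has finite energy and that $d-2\leq s<d$, the preceding $\mu_Q$-a.e.\ inequality automatically extends to all of $\R^d$, yielding
\[
U^{\mu_Q}(x)+\varepsilon\;\leq\;U^\nu(x),\qquad x\in\R^d.
\]

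To close the argument, I would apply Lemma~\ref{Mizu} simultaneously to the two positive finite measures $\mu_Q$ and $\nu$: the union of the two resulting exceptional sets is still thin at infinity, so one can select a sequence $x_n\to\infty$ outside it along which both $U^{\mu_Q}(x_n)\to 0$ and $U^\nu(x_n)\to 0$. Evaluating the global inequality along $(x_n)$ then forces $\varepsilon\leq 0$, contradicting $\varepsilon>0$. The step I expect to require the most care is the applicability of the domination principle, since the supports of $\mu_Q$ and $\nu$ may both be unbounded; this is precisely where the hypothesis $d-2\leq s<d$ is essential, the remainder of the proof being a straightforward combination of the two Frostman characterizations with the vanishing of Riesz potentials at infinity.
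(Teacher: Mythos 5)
Your argument breaks down at the pivotal third step. The inequality you want to propagate is $U^{\mu_Q}+\varepsilon\leq U^{\nu}$, that is, $U^{\mu_Q}\leq U^{\nu}-\varepsilon$ with $\varepsilon>0$: the constant is \emph{subtracted} from the dominating side. Neither the domination principle (Landkof, Theorem~1.29) nor its constant variant (the complete maximum principle, which permits $U^{\mu}\leq U^{\nu}+c$ with $c\geq 0$) covers this case; the constant is only allowed to relax the inequality, not to tighten it. In fact the conclusion you claim is structurally impossible: since $\mu_Q$ and $\nu$ are finite positive measures, $U^{\mu_Q}$ and $U^{\nu}$ both tend to $0$ at infinity outside a thin set (Lemma~\ref{Mizu}), so $U^{\mu_Q}+\varepsilon\leq U^{\nu}$ on all of $\R^{d}$ would already force $\varepsilon\leq 0$. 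The contradiction you reach in your final step therefore refutes the unjustified extension, not the assumption $``\inf"_{S_{\mu_Q}}(U^{\nu}+Q)>F_Q$.

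The paper avoids this by first converting the additive constant into the potential of an auxiliary measure. Starting from the same pointwise inequality $U^{\mu_Q}+(L-F_Q)\leq U^{\nu}$ q.e.\ on $S_{\mu_Q}$ (with $L-F_Q>0$), it picks a compact $A\subset S_{\mu_Q}$ of positive capacity and sets $\sigma=\cp(A)\,\omega_{A}$, so that $U^{\sigma}\leq 1$ everywhere. Then
$$
U^{\mu_Q+(L-F_Q)\sigma}(x)\;\leq\;U^{\mu_Q}(x)+(L-F_Q)\;\leq\;U^{\nu}(x),\qquad\text{q.e.\ on }S_{\mu_Q},
$$
hence $(\mu_Q+(L-F_Q)\sigma)$-a.e. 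Now \emph{both} sides are potentials of positive measures of finite energy, the domination principle legitimately extends the inequality to all of $\R^{d}$, and the principle of positivity of mass yields $1+(L-F_Q)\cp(A)\leq\nu(\R^{d})=1$, a genuine contradiction. The ingredient your sketch is missing is precisely this replacement of the constant by $\,(L-F_Q)U^{\sigma}$, so that the contradiction comes from mass comparison rather than from a (vacuous) evaluation at infinity.
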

% and $\PP(E)$ the set of probability measures on $E$.
%%%%%%%%%%%%%%%%%%
%{\brown
\begin{proof} We essentially follow the proof of \cite[Theorem 1.3]{DS}, where (\ref{Robin-ineq}) was proved in the case of an external field $Q$ on the $d$-dimensional sphere $S^{d}\subset\R^{d+1}$ and $d-1\leq s<d$.
Let %$\nu \in \PP (\Sigma)$ and
$L$ be some constant such that $L\leq U^{\nu}+Q$, q.e. on $S_{\mu_Q}$ or
$$
U^{\nu}-U^{\mu_{Q}}\geq L-Q-U^{\mu_{Q}},\quad \text{q.e on } S_{\mu_Q}.
$$
From the second Frostman inequality, $\inf_{S_{\mu_Q}}(-U^{\mu_{Q}}-Q)\geq-F_{Q}$; hence
$$
U^{\nu}-U^{\mu_{Q}}\geq L-F_{Q},\quad \text{q.e.\ on } S_{\mu_Q}, \textcolor{black}{\text{ hence } \mu_{Q}\text{-a.e.}}
$$
%If $S_{\mu_Q}$ is unbounded, we get $L\leq F_{Q}$ by letting $x\in S_{\mu_Q}$ go to infinity (because $S_{\mu_Q}$, as a support of a measure of finite energy, is not polar at any of its points, in particular at infinity).
Assume now %$S_{\mu_Q}$ is bounded and
that $L>F_{Q}$ and consider
any \textcolor{black}{compact subset $A$ of $S_{\mu_Q}$ of nonzero capacity}. If $S_{\mu_Q}$ is bounded one can take $A=S_{\mu_Q}$.
Let $\omega_{A}$ be the equilibrium measure of $A$ and let $\sigma=\cp(A) \omega_{A}=\textcolor{black}{W(A)^{-1}} \omega_{A}$, where
$\cp(A)$ and $W(A)$ respectively denote the Riesz capacity and Riesz energy of $A$. Then $U^{\sigma}\leq1$ on $\R^{d}$ and in particular on $S_{\mu_Q}$.
%$\mu_{S_{\mu_Q}}$ of $S_{\mu_Q}$. Then $\sigma=\cp(S_{\mu_Q}) \mu_{S_{\mu_Q}}=W^{-1}_{Q}(\Sigma)\mu_{S_{\mu_Q}}$ (we denote by
%$\cp(E)$ the Riesz capacity of a set $E$) satisfies $U^{\sigma}=1$ q.e. on $S_{\mu_Q}$.
Thus we have
$$
U^{\mu_{Q}+(L-F_{Q})\sigma}\leq U^{\nu},\quad\text{q.e. on }S_{\mu_Q}.
$$
By the domination principles \cite[Theorems 1.27 and 1.29]{Land}, the inequality is satisfied everywhere, and then, by the principle of positivity of mass, cf.\ \cite[Theorem 3.11]{FZ} or \cite[Theorem 1.7 p.\ 62]{M},
%it suffices to multiply the inequality by $|x|^{s}$ and let $x\to\infty$ to
we get $1+(L-F_{Q})\cp(A)\leq1$, which is a contradiction.
\end{proof}
\begin{proposition}\label{monot}
For $d-2\leq s< d$ \textcolor{black}{and a given $\Sigma \subset \R^d$},
the Robin constant $F_{Q}$ is a non-decreasing, continuous function of $Q$. More precisely, let $C,\epsilon \geq0$ be some nonnegative constants \textcolor{black}{and denote by $F_i, i=1,2$ the Robin constants for the external fields $Q_i, i=1,2$, respectively, \textcolor{black}{assuming they exist}}. Then
\begin{equation}\label{Robin-monot1}
C\leq Q_{2}-Q_{1}\text{ on }S_{\mu_{Q_{2}}}\implies
\begin{cases}
C\leq (U^{\mu_{Q_{1}}}(x)-F_{1})-(U^{\mu_{Q_{2}}}(x)-F_{2}),\quad x\in\R^{d},\\[5pt]
C\leq F_{2}-F_{1},
\end{cases}
\end{equation}
and
\begin{equation}\label{Robin-monot2}
|Q_{2}-Q_{1}|\leq\epsilon\text{ on }S_{\mu_{Q_{1}}}\cup S_{\mu_{Q_{2}}}\implies
\begin{cases}
|(U^{\mu_{Q_{1}}}(x)-F_{1})-(U^{\mu_{Q_{2}}}(x)-F_{2})|\leq\epsilon,\quad x\in\R^{d},\\[5pt]
|F_{2}-F_{1}|\leq\epsilon.
\end{cases}
\end{equation}
\begin{proof}
The proof is based on the following identity:
\begin{equation}\label{caract}
U^{\mu_{Q}}(x)-F_Q=\inf\{U^{\nu}(x)-\text{``$\inf_{x\in \Sigma}$''}(U^{\nu}+Q),~\nu\in\PP(\Sigma) \},\quad x\in\R^{d},
\end{equation}
where the essential ``inf'' can also be taken on $S_{\mu_Q}$ instead of $\Sigma$.
The left-hand side is no smaller than the infimum because $\mu_{Q}\in\PP(\Sigma)$ and ``$\inf_{x\in \Sigma}$''$(U^{\mu_{Q}}+Q)=F_Q$. For the converse inequality, using
%the second inequality in
\eqref{Frostman2}, note that,
$$
U^{\mu_{Q}}(x)+Q(x)-F_Q+\text{``$\inf_{x\in \Sigma}$''}(U^{\nu}+Q)\leq U^{\nu}(x)+Q(x),\quad x\in S_{\mu_Q},
$$
or equivalently
$$
U^{\mu_{Q}}(x)\leq U^{\nu}(x)+(F_Q-\text{``$\inf_{x\in \Sigma}$''}(U^{\nu}+Q)),\quad x\in S_{\mu_Q},
$$
where the constant term $F_{Q}-\text{``$\inf_{x\in \Sigma}$''}(U^{\nu}+Q)$ is non-negative by (\ref{Robin-ineq}). Then, by the domination principle in the preceding proof, the inequality holds everywhere. Thus the left-hand side in (\ref{caract}) is also no-larger than the right-hand side.

Now, if $C\leq Q_{2}-Q_{1}$ on $S_{\mu_{Q_{2}}}$, we have, for a given $\nu\in\PP(\Sigma)$,
$$
\text{``$\inf_{x\in \Sigma}$''}(U^{\nu}+Q_{1})\leq \text{``$\inf_{x\in S_{\mu_{Q_{2}}}}$''}(U^{\nu}+Q_{1}) \leq
\text{``$\inf_{x\in S_{\mu_{Q_{2}}}}$''}(U^{\nu}+Q_{2})-C;
$$
hence
$$
U^{\nu}(x)-\text{``$\inf_{x\in S_{\mu_{Q_{2}}}}$''}(U^{\nu}+Q_{2})+C\leq U^{\nu}(x)-
\text{``$\inf_{x\in \Sigma}$''}(U^{\nu}+Q_{1}),\qquad x\in\R^{d}.
$$
Taking the infimum over $\nu$ and making use of (\ref{caract}), one gets the first inequality in (\ref{Robin-monot1}); the second inequality $C\leq F_{2}-F_{1}$ simply follows by letting $x$ go to infinity (here we note that a potential that integrates the Riesz kernel at infinity, tends to $0$ at infinity, up to a set thin at infinity, see Lemma \ref{Mizu}).
%\cite[Theorem 3.3]{KM}).
The second implication (\ref{Robin-monot2}) can be proved in the same way.
\end{proof}
\end{proposition}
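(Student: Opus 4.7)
My plan is to reduce the proposition to a single variational identity for the shifted potential:
\begin{equation*}
U^{\mu_Q}(x)-F_Q \;=\; \inf_{\nu\in\PP(\Sigma)}\Bigl\{U^\nu(x) - \text{``$\inf_{y\in\Sigma}$''}(U^\nu+Q)\Bigr\},\qquad x\in\R^d,
\end{equation*}
which is the Riesz-kernel analogue of the representation exploited for the logarithmic case in \cite[Corollary I.4.2]{ST}. Once such a formula for $U^{\mu_Q}-F_Q$ as an infimum over probability measures on $\Sigma$ is available, both implications of the proposition follow almost mechanically, since the comparison between $Q_1$ and $Q_2$ affects only the essential-infimum term inside the braces.

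To establish the identity, the ``$\geq$'' direction is obtained by substituting $\nu=\mu_Q$: the Frostman inequalities (\ref{Frostman1})--(\ref{Frostman2}) yield $\text{``$\inf$''}(U^{\mu_Q}+Q)=F_Q$, so the right-hand side attains the value $U^{\mu_Q}(x)-F_Q$ at this $\nu$. For the ``$\leq$'' direction, fix an arbitrary $\nu\in\PP(\Sigma)$; combining the upper Frostman inequality $U^{\mu_Q}+Q\leq F_Q$ on $S_{\mu_Q}$ with the trivial lower bound $U^\nu+Q\geq\text{``$\inf$''}(U^\nu+Q)$ quasi-everywhere on $S_{\mu_Q}$ gives
\begin{equation*}
U^{\mu_Q}(x) \;\leq\; U^\nu(x)+\bigl(F_Q-\text{``$\inf$''}(U^\nu+Q)\bigr)\quad\text{q.e.\ on }S_{\mu_Q}.
\end{equation*}
The preceding lemma asserting (\ref{Robin-ineq}) guarantees that the additive constant $F_Q-\text{``$\inf$''}(U^\nu+Q)$ is nonnegative, so Landkof's domination principles \cite[Theorems 1.27, 1.29]{Land} propagate the inequality to all of $\R^d$, closing the ``$\leq$'' direction.

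With the characterization in hand, the first implication is immediate: if $C\leq Q_2-Q_1$ on $S_{\mu_{Q_2}}$, then for any $\nu\in\PP(\Sigma)$,
\begin{equation*}
\text{``$\inf_{x\in\Sigma}$''}(U^\nu+Q_1) \;\leq\; \text{``$\inf_{x\in S_{\mu_{Q_2}}}$''}(U^\nu+Q_1) \;\leq\; \text{``$\inf_{x\in S_{\mu_{Q_2}}}$''}(U^\nu+Q_2)-C,
\end{equation*}
so rearranging and taking the infimum over $\nu$ yields the pointwise bound $C\leq(U^{\mu_{Q_1}}(x)-F_1)-(U^{\mu_{Q_2}}(x)-F_2)$ for $x\in\R^d$. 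Letting $|x|\to\infty$ along a set not thin at infinity and invoking Lemma \ref{Mizu}, both Riesz potentials decay to $0$, yielding $C\leq F_2-F_1$. The continuity statement (\ref{Robin-monot2}) then follows by applying (\ref{Robin-monot1}) twice, with $C=-\epsilon$ and with the roles of $Q_1$ and $Q_2$ exchanged.

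The main obstacle, in my view, is the ``$\leq$'' direction of the characterization: it crucially relies on the sign $F_Q-\text{``$\inf$''}(U^\nu+Q)\geq 0$ supplied by the preceding lemma, without which the Riesz domination principle could not be invoked. A secondary subtlety is that the identity is used in two forms --- with essential infimum taken over $\Sigma$ or over $S_{\mu_Q}$ --- and the interchangeability of these two must be justified carefully from the Frostman inequalities together with the fact that sets of zero $s$-capacity are $\mu_Q$-null.
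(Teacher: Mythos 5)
Your proposal reproduces the paper's argument essentially verbatim: the same variational identity $U^{\mu_Q}-F_Q=\inf_\nu\{U^\nu-\text{``inf''}(U^\nu+Q)\}$, established the same way (Frostman inequality for $\geq$, the preceding lemma on $F_Q-\text{``inf''}(U^\nu+Q)\geq0$ plus the domination principle for $\leq$), then comparison of the essential-infimum terms and a limit $|x|\to\infty$ via Lemma \ref{Mizu}. The only wrinkle is your plan to derive (\ref{Robin-monot2}) by invoking (\ref{Robin-monot1}) with $C=-\epsilon$: as stated (\ref{Robin-monot1}) assumes $C\geq0$, so you should either note explicitly that the proof of (\ref{Robin-monot1}) never uses the sign of $C$, or rerun the same comparison argument directly in both directions as the paper implicitly does.
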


\subsection{Kelvin transform and formulas for the sphere and the ball} \label{KBS}
In the sequel, we will use the Kelvin transform, see e.g. \cite[Chapter IV.5]{Land}. We denote by $T$ the Kelvin transform in $\R^{d+1}$, with respect to the point $y= (y_1,\ldots,y_{d+1})$, $y_{d+1}>0$ and with radius $\sqrt{2 y_{d+1}}$. It maps $\R^d$ onto the sphere $S_{y}^{d}$ of radius 1, having $y$ as its north pole. This transformation was used in \cite{BDT2012} and, recently, in \cite{BDO}. Then, with $x^{*}=T(x)$, $t^{*}=T(t)$, %where $x,t\in \R^d$,
%is denoted by $u\in S$,
 we have %that the following relation hold.
\begin{equation*}%\label{invrelat}
|x-y| |x^{*}-y| = 2 y_{d+1},\qquad |t^{*}-x^{*}| = \frac{2 y_{d+1} |x-t|}{|x-y|\,|t-y|}\,.
%\; |du| = \frac{2 |x_{d+1}| dx}{|x-y|^2}\,.
\end{equation*}
%and, if $t\in \R^d$ is another arbitrary point whose image is denoted by $v\in S$, the following relation between the distances holds,
%\begin{equation}\label{distrelat}
%|t^{*}-x^{*}| = \frac{2 |y_{d+1}| |x-t|}{|x-y|\,|t-y|}\,.
%\end{equation}
For the Riesz potentials and energies, we have the following relations:
%under the Kelvin map $T$, in the following way
\begin{equation}\label{rel-pot}
U^{\mu^{*}}(x^{*})=\frac{|x-y|^{s}}{(2y_{d+1})^{s/2}}U^{\mu}(x),\qquad
I(\mu^{*})=I(\mu),
\end{equation}
where
\begin{equation}\label{rel-meas}
d\mu^{*}(t^{*})=\frac{(2y_{d+1})^{s/2}}{|t-y|^{s}}d\mu(t)=\frac{|t^{*}-y|^{s}}{(2y_{d+1})^{s/2}}d\mu(t).
\end{equation}
Note, in particular, that
\begin{equation}\label{rel-mass-pot}
\mu(\R^{d+1})=(2y_{d+1})^{s/2}U^{\mu^{*}}(y).
\end{equation}

We also recall that the surface area of the $d$-dimensional unit sphere $S^{d}\subset \R^{d+1}$ equals
\begin{equation}\label{surface}
\omega_{d} =\frac{2 {\pi}^{(d+1)/2}}{\Gamma \left({(d+1)}/{2}\right)}.
\end{equation}
The equilibrium measure of the $d$-dimensional unit sphere $S^{d}$ is the normalized surface measure, denoted by $\sigma_{d}$.
Its energy $W(S^{d})$  is given by
\begin{equation}\label{sphereenergy}
W(S^{d}) = \begin{cases} {\displaystyle \frac{\Gamma \left(\frac{d+1}{2}\right)\Gamma (\alpha)}{\Gamma \left(\frac{\alpha+1}{2}\right)\Gamma \left(d-\,\frac{s}{2} \right)}}, & 0<s<d,\,d\geq 3,
\\[10pt]
{\displaystyle 2^{1-s}/(2-s)}, & 0<s<2, \,d=2, \end{cases}
\end{equation}
%$C_{s,d}$ is a positive normalization constant depending on $s$ and the dimension $d$.
see \cite[Formula (4.6.5)]{BHS}.
%For future use, we first recall that,
For $d-2 < s < d$ and $d\geq 2$, the equilibrium measure $\omega_R$ of the closed ball $B_R$ of radius $R$ in $\R^{d}$ is absolutely continuous with respect to Lebesgue measure, with density
\begin{equation}\label{equilball}
\omega_R' (x) =  \frac{c_{R}}{(R^2 - |x|^2)^{\alpha/2}},\qquad
c_{R}= \frac{\pi^{-d/2}\Gamma (1+s/2)}{R^{s}\Gamma (1-\alpha/2)},
\end{equation}
see \cite[p.\ 163]{Land}, \cite[Eq.(4.6.12)]{BHS}. Its potential at the point $y=(0,y_{d+1})$ equals
\begin{equation}\label{U-ball}
U^{\omega_{R}}(y)=\int_{\R^{d}}\frac{d\omega_{R}(x)}{|x-y|^{s}}
=
%\frac{\omega_{d-1}\Gamma(d/2)}{2\pi^{d/2}y_{d+1}^{s}}
y_{d+1}^{-s}~{}_{2}F_{1}
\left(\frac{s}{2},\frac{d}{2},1+\frac{s}{2},-\frac{R^{2}}{y_{d+1}^{2}}\right),
\end{equation}
which is easily checked from the Euler integral formula for hypergeometric functions. \textcolor{black}{Here $_{2}F_{1}$ 
denotes the Gauss hypergeometric function (see e.g. \cite{Abramowitz})}.
Finally, the $s$-energy of $B_{R}$ is
\begin{equation}\label{ballenergy}
W(B_R) = \frac{s}{2R^{s}}B\left(\frac{s}{2},\frac{\alpha}{2}\right),
\end{equation}
%with $L_{s,d}$ given in \eqref{Lsd},
where $B(x,y)$ denotes the Beta function, see \cite[Section 4.6, p.\ 183]{BHS}.
\subsection{Weak balayage and signed equilibrium measures}

The following result plays an important role for our analysis.

\begin{lemma}\label{lem:balayage}
Let $0<s<d$ and
$y= (y_1,\ldots,y_{d+1}) \in \R^{d+1} \setminus \R^d$ with $y_{d+1}\not= 0$.
%Then, we have the following expression for
The weak balayage $\delta_{y}^w$ of $\delta_{y}$ onto $\R^d$ is given by
\begin{equation}\label{pointbalay}
d {\delta}_{y}^w (x) = \frac{(2|y_{d+1}|)^{d-s}}{{\omega_{d}} W(S^{d})|x - y|^{2d-s}}dx,
\end{equation}
where $dx$ denotes the Lebesgue measure in $\R^d$.
Moreover, there is no mass loss in this case; that is,
%\begin{equation}\label{nomassloss}
$\|{\delta}_{y}^w\| = \|\delta_{y}\| = 1.$
Furthermore, if $\nu$ is a signed measure of finite mass with support in $\R^{d+1}\setminus \R^{d}$ (not necessarily compact), its weak balayage $\nu^w$ is given by the superposition
\begin{equation}\label{nubalay}
d \nu^w (x)=\left( \int_{S_\nu} \delta_y^w (x)\, d\nu(y) \right) dx.
\end{equation}
%\end{equation}
\end{lemma}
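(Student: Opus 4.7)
The plan is to prove the formula for $\delta_y^w$ via the Kelvin transform, reducing the problem to the well-known fact that the normalized surface measure on a sphere has constant potential. Assume without loss of generality $y_{d+1}>0$ (the case $y_{d+1}<0$ is symmetric). Consider the Kelvin transform $T$ from Section \ref{KBS} with respect to $y$ and radius $\sqrt{2y_{d+1}}$, which maps $\R^d$ bijectively onto the sphere $S_y^d$ of radius $1$ having $y$ as north pole. Let $\sigma_d$ be the normalized surface measure on $S_y^d$, which is the equilibrium measure of $S_y^d$ with $U^{\sigma_d}\equiv W(S^d)$ on $S_y^d$ (in particular at the point $y\in S_y^d$).

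Let $\mu$ be the unique measure on $\R^d$ whose Kelvin image $\mu^*$ equals $\sigma_d$. Applying \eqref{rel-pot} with this $\mu$ and evaluating at any $x\in\R^d$ (so that $x^*\in S_y^d$), we obtain
\begin{equation*}
W(S^d)=U^{\sigma_d}(x^*)=\frac{|x-y|^s}{(2y_{d+1})^{s/2}}\,U^\mu(x),
\end{equation*}
so that $U^\mu(x)=(2y_{d+1})^{s/2}W(S^d)\,|x-y|^{-s}$. Dividing by the constant $(2y_{d+1})^{s/2}W(S^d)$ yields a measure whose potential equals $U^{\delta_y}(x)=|x-y|^{-s}$ everywhere on $\R^d$, so by definition this normalized measure is the weak balayage $\delta_y^w$. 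For the explicit density, use \eqref{rel-meas} to pull back $\sigma_d$ to $\R^d$: with $d\sigma_d(t^*)=\omega_d^{-1}dS(t^*)$ and the conformal factor of the Kelvin transform giving $dS(t^*)=\bigl(2y_{d+1}/|t-y|^2\bigr)^d\,dt$, we arrive at
\begin{equation*}
d\mu(t)=\frac{|t-y|^s}{(2y_{d+1})^{s/2}}\,\frac{1}{\omega_d}\Bigl(\frac{2y_{d+1}}{|t-y|^2}\Bigr)^d\,dt=\frac{(2y_{d+1})^{d-s/2}}{\omega_d\,|t-y|^{2d-s}}\,dt,
\end{equation*}
and dividing by $(2y_{d+1})^{s/2}W(S^d)$ gives exactly \eqref{pointbalay}.

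For the mass statement, applying \eqref{rel-mass-pot} to $\mu$ gives $\mu(\R^d)=(2y_{d+1})^{s/2}U^{\sigma_d}(y)=(2y_{d+1})^{s/2}W(S^d)$, so $\|\delta_y^w\|=1$ after the normalization, confirming there is no mass loss. Finally, for a signed measure $\nu$ with support in $\R^{d+1}\setminus\R^d$, define the candidate $\tilde\nu$ by the superposition \eqref{nubalay}; Fubini's theorem yields $U^{\tilde\nu}(x)=\int U^{\delta_y^w}(x)\,d\nu(y)=\int|x-y|^{-s}\,d\nu(y)=U^\nu(x)$ for $x\in\R^d$, and since $\tilde\nu$ is supported on $\R^d$, it satisfies the defining property of the weak balayage, hence $\nu^w=\tilde\nu$.

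The only step requiring care is the identification of the Jacobian of the Kelvin transform restricted to $\R^d$ (a $d$-dimensional surface inside $\R^{d+1}$), which gives the factor $\bigl(2y_{d+1}/|t-y|^2\bigr)^d$; this is the main computational point but it follows from the standard conformal character of the Kelvin inversion. Everything else is a bookkeeping application of the transformation rules \eqref{rel-pot}, \eqref{rel-meas}, \eqref{rel-mass-pot} and the known equilibrium property of $\sigma_d$.
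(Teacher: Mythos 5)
Your proposal is correct and uses essentially the same machinery as the paper's proof: the Kelvin transform $T$ centered at $y$, the constant‑potential property $U^{\sigma_d}\equiv W(S^d)$ on $S^d_y$, and the conformal factor $(2y_{d+1}/|t-y|^2)^d$ relating surface measure on $S^d_y$ to Lebesgue measure on $\R^d$. The only distinction is one of presentation: the paper starts from the proposed density \eqref{pointbalay} and verifies its potential equals $|z-y|^{-s}$, whereas you derive the density by pulling $\sigma_d$ back through $T$ via \eqref{rel-meas} and normalizing, and you also handle the mass via \eqref{rel-mass-pot} rather than by a direct integral; these are the same computation run in opposite directions.
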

\begin{remark} We remark that for $d-2\leq s<d$ the weak balayage measures $\delta_y^w$ and $\nu^w$ coincide with the respective $s$-balayage measures $\widehat{\delta}_{y}$ and $\widehat\nu$ defined in \eqref{balayage} \textcolor{black}{(note that the set of irregular points of ${\mathbb R}^{d+1}\setminus \mathbb{R}^d$ is the empty set)}.
\end{remark}
\begin{proof}
Without loss of generality assume $y_{d+1}>0$. To prove \eqref{pointbalay}, it suffices to verify that
\[U^{\delta_{y}^w}(z) =\int_{\R^d}  \frac{(2y_{d+1})^{d-s}}{{\omega_{d}} W(S^{d})|z-x|^s |x - y|^{2d-s}}dx=\frac{1}{ |z-y|^{s}},\quad z\in\R^{d}.\]
We shall utilize the Kelvin transform $T$ from the previous subsection. The relation between the measure normalized unit surface measure $d\sigma_{d}$ on $S^{d}_{y}$ and the Lebesgue measure on $\R^{d}$ is
$$
\omega_{d}d\sigma_d(x^{*})=\frac{|x^{*}-y|^d}{|x-y|^d}dx =\frac{(2 y_{d+1})^d\, dx}{|x-y|^{2d}},
$$
which yields
\[U^{\delta_{y}^w}(z) =\frac{1}{W(S^{d})|z-y|^s} \int_{S_y^d}  \frac{1}{|z^*-x^*|^s}d\sigma_d(x^*),\quad z^* \in S_y^{d}.\]
Since $\sigma_d(x^*)$ is the equilibrium measure on $S_y^d$, the integral above equals $W(S^d)$ for any $z^* \in S_y^d$, equation \eqref{pointbalay} follows.
The total mass of $\delta_y^w$ is computed as
\[\|\delta_y^w \|=\int_{\R^d}  \frac{(2y_{d+1})^{d-s}}{{\omega_{d}} W(S^{d})|x - y|^{2d-s}}dx=\frac{1}{W(S^d)}\int_{S_y^d}  \frac{1}{|x^*-y|^s}d\sigma_d(x^*) =1.\]
The equation \eqref{nubalay} for any $z\in \R^d$ can be derived as in \cite[Section IV.5, (4.5.5)]{Land}, which completes the proof.
\end{proof}
%Consider the Kelvin transform $\left( \delta_{y}^{w}\right)^*(x)$.  by (\ref{rel-pot}), the potential of  is constant on $S_{y}^{d}$, equal to $(2y_{d+1})^{-s/2}$.
%Since
%%For $d-2<s<d$,
%the equilibrium measure of the sphere $S_{y}^{d}$ is the normalized Lebesgue measure $\sigma_{d}$, whose potential equals $W(S^{d})$ on $S_{y}^{d}$, we have
%\begin{equation}\label{form-lebes}
%\hat\delta_{y}^{*}=
%(2y_{d+1})^{-s/2}W(S^{d})^{-1}\sigma_{d}.
%\end{equation}
%The relation between the measure $d\sigma_{d}$ on $S^{d}_{y}$ and the Lebesgue measure on $\R^{d}$ is known, namely
%%Thus, if we set $\displaystyle d\tau (t) = \frac{y_{d+1}^{\alpha}}{|t-y|^{2d-s}}\,d t,$ then
%Hence, by (\ref{rel-meas}),
%$$
%d\sigma_d^{*}(x)=\frac{|x-y|^{s}}{(2y_{d+1})^{s/2}}d\sigma_{d}(x^{*})=\frac{(2 y_{d+1})^{d-s/2}}{{\omega_{d}} |x-y|^{2d-s}}dx.
%$$
%We now obtain (\ref{pointbalay}) from the above equality and by taking the Kelvin transform on both sides of (\ref{form-lebes}). Finally, by (\ref{rel-mass-pot}), $\|\hat\delta_{y}\|=(2y_{d+1})^{s/2}U^{\hat\delta_{y}^{*}}(y)=1$.
For our analysis the notion of signed equilibrium measure will be very important.
\begin{definition}
Let $\Sigma$ be a closed subset of $\R^d$. A {\it  signed equilibrium measure} for $\Sigma$ in the external field $Q$ is a \textcolor{black}{(finite)} signed measure $\eta_{Q,\Sigma}$ \textcolor{black}{with finite $s$-energy} supported on $\Sigma$ such that $\eta_{Q,\Sigma} (\Sigma) = 1$ and there exists a finite constant $C_{Q,\Sigma}$ such that
\begin{equation}\label{defsigned}
U^{\eta_{Q,\Sigma}}(x) + Q(x) = C_{Q,\Sigma}\quad\text{q.e. on }\Sigma.
\end{equation}
\end{definition}
\begin{remark}
If this signed equilibrium measure exists, then it is unique, see \cite[Lemma 23]{BDS2009}.
\textcolor{black}{Also, recall that a signed measure $\nu$ has finite energy if and only if both $\nu^+$ and $\nu^-$ have finite energy
%Note also that $\eta_{Q,\Sigma}^+$ and $\eta_{Q,\Sigma}^-$ have finite $s$-energy 
(see \cite{Fu}, as well as \cite[Definition 4.2.4, p.134]{BHS} for more general kernels). Thus, it follows from our definition that both $\eta_{Q,\Sigma}^+$ and $\eta_{Q,\Sigma}^-$ have finite energy.}
\end{remark}
Our next result describes the relation between the signed equilibrium and the (positive) equilibrium measure . This result corresponds to \cite[Lemma 3]{KD}, where it was established for the logarithmic kernel in the complex plane, when %we know in advance that
$S_{\mu_Q}$ is a compact set. \textcolor{black}{We consider this result of independent interest and will establish it for a general closed subset $\Sigma$, without imposing the previous assumptions A1) and A2).}
%%%%%%%%%%%%%%%%%%
\begin{lemma}\label{lem:inclusion}
Let $d-2\leq s <d$ and let $\Sigma$ be a closed subset of $\R^d$ \textcolor{black}{of positive capacity}.
Assume an equilibrium measure $\mu_{Q}$ and a signed equilibrium measure $\eta_{Q,\Sigma}$ exist. Denote by $\eta_{Q,\Sigma}^+$ the positive part in the Jordan decomposition of $\eta_{Q,\Sigma}$.
%of the signed equilibrium measure $\eta_{Q,\Sigma}$.
 Then,
\\[10pt]
(i) one has
\begin{equation}\label{signeddomin}
\mu_Q \leq \eta_{Q,\Sigma}^+\, ;
\end{equation}
in particular,
\begin{equation*}%\label{signedsupp}
S_{\mu_Q} \subseteq S_{\eta_{Q,\Sigma}^+}.
\end{equation*}
(ii) \textcolor{black}{Let $\Sigma_{1}$ be a closed subset of $\Sigma$ that admits a signed equilibrium measure $\eta_{Q,\Sigma_1}$} for which $S_{\mu_Q}\subset\Sigma_{1}$. If $\eta_{Q,\Sigma_{1}}$ is a positive measure, then $\mu_{Q}=\eta_{Q,\Sigma_{1}}$.
\\[10pt]
(iii) Let $\Sigma=\R^{d}$ and let $Q$ be an external field such that $S_{\mu_Q}$ is compact. Assume that there exists an $R_{0}>0$ such that, for each $R$ larger than $R_{0}$, \textcolor{black}{there exists a neighborhood $V$ of the boundary of $B_{R}$, such that the restriction of the signed equilibrium measure $\eta_{Q,B_{R}}$ to $V$ is negative.} Then $S_{\mu_Q}\subset B_{R_{0}}$.
\end{lemma}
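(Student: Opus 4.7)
The plan is to establish the three parts sequentially, using the Frostman characterization of Theorem~\ref{equiv-Min-Fro} together with the Riesz version of the de La Vall\'ee Poussin theorem proved in Theorem~\ref{dlvp}.

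For part (i), I would combine the first Frostman inequality $U^{\mu_Q}+Q\ge F_Q$ (q.e.\ on $\Sigma$) with the defining identity $U^{\eta_{Q,\Sigma}^+}-U^{\eta_{Q,\Sigma}^-}+Q=C_{Q,\Sigma}$ (q.e.\ on $\Sigma$) and subtract to obtain
\[
U^{\eta_{Q,\Sigma}^+}(x)\le U^{\mu_Q+\eta_{Q,\Sigma}^-}(x)+(C_{Q,\Sigma}-F_Q),\qquad\text{q.e.\ on }\Sigma.
\]
Since $\eta_{Q,\Sigma}^+$ has finite energy it does not charge polar sets, so the inequality holds $\eta_{Q,\Sigma}^+$-a.e. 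Applying Theorem~\ref{dlvp} with $\nu=\eta_{Q,\Sigma}^+$ and $\mu=\mu_Q+\eta_{Q,\Sigma}^-$ shows that $(\mu_Q+\eta_{Q,\Sigma}^-)-\eta_{Q,\Sigma}^+\le 0$ on the coincidence set $E$. On $S_{\mu_Q}$ the Frostman relation is an equality q.e., so $S_{\mu_Q}\setminus E$ is polar and $\mu_Q$ (having finite energy) does not charge it. Consequently, for every Borel set $A\subset\R^d$,
\[
\mu_Q(A)=\mu_Q(A\cap E)\le \eta_{Q,\Sigma}^+(A\cap E)-\eta_{Q,\Sigma}^-(A\cap E)\le \eta_{Q,\Sigma}^+(A),
\]
which yields $\mu_Q\le\eta_{Q,\Sigma}^+$ and the support inclusion.

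Part (ii) is a direct application of the converse Frostman statement. Since $S_{\mu_Q}\subset\Sigma_1$ and $S_{\eta_{Q,\Sigma_1}}\subset\Sigma_1$, the identity $U^{\eta_{Q,\Sigma_1}}+Q=C_{Q,\Sigma_1}$ q.e.\ on $\Sigma_1$ yields both \eqref{Frostman4} and the weaker lower bound \eqref{Frostman5} for the positive probability measure $\eta_{Q,\Sigma_1}$, with common constant $F=C_{Q,\Sigma_1}$; Theorem~\ref{equiv-Min-Fro}(iv) then forces $\eta_{Q,\Sigma_1}=\mu_Q$.

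For part (iii), I would argue by contradiction. Since $S_{\mu_Q}$ is compact, $R^{*}:=\max\{|x|:x\in S_{\mu_Q}\}$ is finite; assume $R^{*}>R_0$. Because $S_{\mu_Q}\subset B_{R^{*}}$, the Frostman inequalities of Theorem~\ref{equiv-Min-Fro} remain valid when $\Sigma$ is replaced by $B_{R^{*}}$, so by its converse part $\mu_Q$ is also the equilibrium measure for the problem on $B_{R^{*}}$. Applying part~(i) with $\Sigma=B_{R^{*}}$ then gives $S_{\mu_Q}\subset S_{\eta_{Q,B_{R^{*}}}^+}$. By hypothesis there is a neighborhood $V$ of $\partial B_{R^{*}}$ on which $\eta_{Q,B_{R^{*}}}$ is negative; because the Jordan parts have disjoint supports, negativity of the signed equilibrium on the open set $V$ forces $S_{\eta_{Q,B_{R^{*}}}^+}\cap V=\emptyset$, so $S_{\mu_Q}\cap V=\emptyset$. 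But the maximum $R^{*}$ is attained, hence $S_{\mu_Q}\cap\partial B_{R^{*}}\neq\emptyset$ and $\partial B_{R^{*}}\subset V$, a contradiction. The main obstacle of the whole proof lies in part~(i): the careful tracking of the q.e.\ and a.e.\ qualifiers in the hypothesis of Theorem~\ref{dlvp} and on the coincidence set $E$, and the verification that neither $\mu_Q$ nor $\eta_{Q,\Sigma}^{\pm}$ charges the resulting exceptional polar sets. The sign of $C_{Q,\Sigma}-F_Q$ is not a genuine obstacle, since the proof of Theorem~\ref{dlvp} given in the paper does not actually depend on $C\ge 0$.
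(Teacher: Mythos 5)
Your overall strategy—derive the comparison inequality between $U^{\eta_{Q,\Sigma}^+}$ and $U^{\mu_Q+\eta_{Q,\Sigma}^-}$, apply the Riesz de La Vall\'ee Poussin theorem, then invoke the converse Frostman characterization—matches the paper, and parts (ii) and (iii) are essentially identical to the paper's arguments (your Jordan-decomposition argument in (iii) is a correct and slightly more explicit version of the paper's "$\eta_{Q,B_R}(V)<0$ while $\mu_{Q,R}(V)>0$" step).

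However, part (i) has a genuine gap: your closing claim that "the sign of $C_{Q,\Sigma}-F_Q$ is not a genuine obstacle, since the proof of Theorem~\ref{dlvp} given in the paper does not actually depend on $C\ge 0$" is false. The very first step of the proof of Theorem~\ref{dlvp} invokes the domination principle of Landkof to upgrade the hypothesis $U^\nu\le U^\mu+C$ ($\nu$-a.e.) to an inequality valid everywhere on $\R^d$, and that principle genuinely requires $C\ge 0$. (For measures of finite energy and compact support, both potentials vanish at infinity, so a global bound $U^\nu\le U^\mu+C$ with $C<0$ is simply false; the domination principle with negative constant does not hold.) Since you apply Theorem~\ref{dlvp} with $C=C_{Q,\Sigma}-F_Q$, you must first establish $C_{Q,\Sigma}-F_Q\ge 0$. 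The paper devotes the bulk of the proof of part (i) precisely to this verification, splitting into two cases: when $\Sigma$ is bounded or thin at infinity it uses the generalized Robin measure $\gamma$ (and exhausting compacts $E_n$, monotone convergence, and the equality $\eta^+_{Q,\Sigma}(\Sigma)=(\eta^-_{Q,\Sigma}+\mu_Q)(\Sigma)$) to integrate inequality \eqref{PAMS} against $\gamma_n$; when $\Sigma$ is not thin at infinity it uses Lemma~\ref{Mizu} to produce a sequence $x_n\to\infty$ on which both $U^{\eta^+_{Q,\Sigma}}(x_n)\to 0$ and $U^{\eta^-_{Q,\Sigma}+\mu_Q}(x_n)\to 0$ while \eqref{PAMS} holds. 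Without supplying an argument of this kind, the application of Theorem~\ref{dlvp} in your proof of (i) is not justified.
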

\begin{proof}
(i) From \eqref{defsigned} and \eqref{Frostman1}--\eqref{Frostman2} we have that
\begin{align}\label{PAMS}
U^{\eta^+_{Q,\Sigma}}(x) & \leq U^{\eta^-_{Q,\Sigma} + \mu_Q}(x) + C-F_Q,\quad\text{q.e. on } \Sigma,
\\\label{PAMS2}
U^{\eta^+_{Q,\Sigma}}(x)& \geq U^{\eta^-_{Q,\Sigma} + \mu_Q}(x) + C-F_Q,\quad\text{ on }S_{\mu_Q}\textcolor{black}{,}
\end{align}
\textcolor{black}{where \textcolor{black}{$C = C_{Q,\Sigma}$} is the equilibrium constant introduced in (3.24). 
We will prove that $C-F_Q \geq 0$. We consider two cases.} 

\textcolor{black}{First, assume that $\Sigma$ is bounded or thin at infinity. Then, an equilibrium measure $\gamma$ exists normalized so that its potential $U^{\gamma}$ is 1 q.e.\ on $\Sigma$ \textcolor{black}{(see \cite[Sections II.2, V.1]{Land}). Note that $I (\gamma)=\cp(\Sigma)=\gamma(\Sigma)$. 
The measure $\gamma$ is characterized by the following extremal property: its potential $U^\gamma (x)$ is the greatest lower bound of the potentials of measures $\mu$ satisfying $U^\mu (x)\geq 1 $ q.e. on $E$. 
In addition, let us consider for all positive integers $n$ the expanding sequence of compact sets $E_n:=\{ x \in \Sigma \, :\, |x|\leq n\} $ and their equilibrium measures normalized analogously so that $U^{\gamma_n}(x)=1$ q.e. on $E_n$, or $\gamma_n =\cp(E_n)\mu_{E_n}$. Because of the extremal property mentioned above, which characterizes these measures as well, we have that the sequence of potentials $U^{\gamma_n}(x)$ is increasing (see e.g. \cite[Lemma 4.5, p. 275]{Land}) and converges pointwise to the potential of $U^\gamma (x)$ (the argument is similar to \cite[Proof of Theorem 5.1, p. 280]{Land}). Since the $\gamma_n$ have finite energies, \eqref{PAMS} holds $\gamma_n$-a.e., therefore; after integration we get that for all positive integers $n$
\[ \int U^{\eta^+_{Q,\Sigma}}(x) \, d\gamma_n (x) \leq \int U^{\eta^-_{Q,\Sigma} + \mu_Q}(x)\, d\gamma_n(x) + (C-F_Q)\cp(E_n).\]
Applying Fubini-Tonelli's theorem and Lebesgue's monotone convergence theorem (see e.g. \cite{R}), we get
\[\lim_{n\to +\infty} \int U^{\eta^+_{Q,\Sigma}}(x) \, d\gamma_n (x) = \lim_{n\to +\infty} \int U^{\gamma_n}(x) \, d\eta^+_{Q,\Sigma} (x) = \int U^\gamma (x) \, d\eta^+_{Q,\Sigma} (x) =\eta^+_{Q,\Sigma}(\Sigma).\]
In the last equality we used that $\eta_{Q,\Sigma}^+$ has finite $s$-energy. Similarly,
\[\lim_{n\to +\infty} \int U^{\eta^-_{Q,\Sigma}+\mu_Q}(x) \, d\gamma_n (x) = (\eta^-_{Q,\Sigma} +\mu_Q)(\Sigma).\]
Since $\eta^+_{Q,\Sigma}(\Sigma) = (\eta^-_{Q,\Sigma}+\mu_Q)(\Sigma)$ we obtain that
\[0\leq (C-F_Q)\lim_{n\to +\infty}\cp(E_n),\]
%Here we have used that $\eta^+_{Q,\Sigma}$ and $\eta^-_{Q,\Sigma}+\mu_Q$ have finite Riesz energy. We conclude that, even when $\gamma(\Sigma)=+\infty$, we have $0\leq (C-F_Q)\gamma(\Sigma)$, and therefore 
which implies that $C-F_Q \geq 0$ (observe that the limit in the last inequality is either finite or $+\infty$).}}

\textcolor{black}{If $\Sigma$ is not thin at infinity, then we can apply \eqref{Mizu1} to $\eta^+_{Q,\Sigma}$ and $(\eta^-_{Q,\Sigma}+\mu_Q)$ with some thin at infinity sets $P_1$ and $P_2$ respectively, such that 
\[ \lim_{|x| \to +\infty, \ x\in \mathbb{R}^d\setminus P_1} U^{\eta^+_{Q,\Sigma}}(x)= 0,\quad  \lim_{|x| \to +\infty,\  x\in \mathbb{R}^d\setminus P_2} U^{\eta^-_{Q,\Sigma}+\mu_Q}(x)=0.\] 
Since \eqref{PAMS} holds q.e., the exceptional set $P_3$,  where the inequality fails is of zero capacity, and hence thin at infinity. Then $P=P_1\cup P_2 \cup P_3$ is thin at infinity. Therefore, there exists a sequence $\{ x_n\} \subset \Sigma\setminus P$, $|x_n|\to +\infty$ as $n\to \infty$, for which \eqref{PAMS} holds and
\[ \lim_{n \to +\infty} U^{\eta^+_{Q,\Sigma}}(x_n)= 0,\quad  \lim_{n \to +\infty} U^{\eta^-_{Q,\Sigma}+\mu_Q}(x_n)=0.\] 
This yields $C-F_{Q}\geq0$ in this case too.} 

\textcolor{black}{This allows us to apply the principle of domination and conclude \eqref{PAMS} holds everywhere (recall that $\eta_{Q,\Sigma}^{+}$ has finite energy), which in turn implies that equality holds in \eqref{PAMS2}. Hence, the assumptions in Theorem \ref{dlvp} \textcolor{black}{with $\mu=\mu_{Q}+\eta_{Q,\Sigma}^{-}$, $\nu=\eta_{Q,\Sigma}^{+}$ and $C=C-F_Q$,}  are satisfied and we thus derive that
$\mu_{Q}+\eta_{Q,\Sigma}^{-}\leq\eta_{Q,\Sigma}^{+}$ on $S_{\mu_Q}$ and consequently $\mu_{Q}\leq\eta_{Q,\Sigma}^{+}$.}\vskip 3 mm
(ii) The fact that $\mu_{Q}=\eta_{Q,\Sigma_{1}}$ is a consequence of (\ref{defsigned}) since, under this hypothesis, the inequalities
(\ref{Frostman4}) and (\ref{Frostman5}) characterizing the equilibrium measure are satisfied by the positive measure $\eta_{Q,\Sigma_{1}}$.
\vskip 3mm
(iii) Assume $S_{\mu_Q}$ is not included in $B_{R_{0}}$. Consider the smallest ball $B_{R}$, $R>R_{0}$, that contains $S_{\mu_Q}$. Since $\mu_{Q}$ satisfies (\ref{Frostman3})--(\ref{Frostman4}) on $B_{R}$, it holds that $\mu_{Q}=\mu_{Q,R}$, the weighted equilibrium measure of $B_{R}$. Now pick some $x\in \partial B_{R}\cap S_{\mu_Q}$. In a small neighborhood $V$ of $x$, we have $\eta_{Q,B_{R}}(V)<0$ while $\mu_{Q,R}(V)>0$ which contradicts (\ref{signeddomin}).
\end{proof}
%%%%%%%%%%%%%%%%%%%%%%%%%%%%%
\section{External fields created by pointwise charges}
\label{examples}
Now, we are concerned with the particular case of external fields as in Corollary \ref{cor:admiss} and Theorem \ref{thm:weakadmiss}, %when the external fields $Q$ are
created by signed discrete measures $\nu$ supported in $\R^{d+1}\setminus \R^d$,
%; namely, these external fields are
of the form
\begin{equation}\label{discretemeas}
Q(x):= \,\sum_{j=1}^k\,\gamma_j\,U^{\delta_{y_{j}}}(x)= \sum_{j=1}^k\,\gamma_j\,|x - y_{j}|^{-s}=\sum_{j=1}^k\,\gamma_j\,\left(|x|^2 + y_{j,d+1}^2\right)^{-s/2},
\end{equation}
where $y_{j} := (0; y_{j,d+1})= (0,\ldots,0,y_{j,d+1})$, with $y_{j,d+1}\neq 0$. Without loss of generality, we assume that $y_{j,d+1}>0$, $j=1,\ldots, k$. For the charges, we assume that
$$
%y_{j,d+1}\neq 0,\quad\gamma_j \in \R,\quad j=1,\ldots,k,\quad \text{and }
\Gamma:= \sum_{j=1}^k \gamma_j\leq -1.$$

In the sequel, the case $\Gamma < -1$, where the compactness of the support of $\mu_Q$ is guaranteed by Corollary \ref{cor:admiss}, will be called admissible (using the analogy with the logarithmic potential setting, see \cite[Chapter I]{ST}), and the case $\Gamma=-1$ will be called weakly admissible (here, only the existence of $\mu_Q$ is ensured a priori by Theorem \ref{thm:general}).
%($\Gamma = -1$) settings will be analyzed separately.

\subsection{Admissible setting}

%In particular,
We will focus on the case of a single ``attractor'', that is $k=1$, and
$$\gamma = \gamma_1 < -1.$$

We suppose, without loss of generality, that $y = (0; y_{d+1})$, with $y_{d+1} >0$.
Then, the external field acting on the hyperplanar conductor $\R^d$ is given by
\begin{equation}\label{efsingle}
Q(x) = \,\frac{\gamma}{(|x|^2 + y_{d+1}^2)^{s/2}}\,,\,x\in \R^d.
\end{equation}
From Theorem \ref{thm:general} %and Corollary \ref{cor:admiss}
we know that the equilibrium measure $\mu_Q$ has compact support, extending the result in \cite[Theorem 2.1]{BDO} for the case $d=1$. Since the external field is radial, the support $S_{\mu_Q}$ has circular symmetry, but it may be, in principle, a ball, a sphere or several spheres, or even several shells. Observe that $Q$ is convex in the closed ball
\begin{equation*}%\label{convexball}
|x|\leq \frac{y_{d+1}}{\sqrt{s+1}},
\end{equation*}
(but not in the whole $\R^d$); hence we can only assert that the intersection of $S_{\mu_Q}$ with that ball is a convex set and, in our case, it is a ball.

We now state our main result in this section.
\begin{theorem}\label{thm:singleattr}
 Let $d-2\leq s<d$. The equilibrium problem in $\R^d$ in the external field \eqref{efsingle} satisfies the following.

\begin{itemize}

\item[(i)] The support $S_{\mu_Q}$ of the equilibrium measure $\mu_Q$ is a closed ball $B_{R_0}$.
%$ = \{x\in \R^{d}\,:\,|x|\leq R_0\}\,.$

\item [(ii)] The density of $\mu_Q$ is given by
\begin{equation}\label{muQ}
\mu'_Q(x) = -\gamma H_{y,R_{0}}(x),\quad H_{y,R_{0}}(x)= \frac{(2y_{d+1})^{\alpha}}{W(S^{d})\omega_{d}}
\left(\frac{1}{|x-y|^{2d-s}}
- \frac{\sin(\alpha\pi/2)}{\pi}J(x,y)
\right),
\end{equation}
where $W(S^{d})$ and $\omega_{d}$ are given in \eqref{sphereenergy} and \eqref{surface}, respectively, and
\begin{equation}\label{integralJ(x)}
J(x,y) %= J(x,R_0,y_{d+1},s,d)
:=\int_0^{+\infty}\,\frac{u^{\alpha/2-1} du}{(u+1) \left((R_0^2-|x|^2)u+R_0^2+y_{d+1}^2\right)^{d-s/2}}.
\end{equation}
The density of $\mu_{Q}$ vanishes on the boundary of $B_{R_{0}}$.
\item [(iii)] The radius $R_0$ is equal to $R_{0}=y_{d+1}\sqrt{z}$ where $z$ is the unique positive solution of the equation
\begin{equation}\label{sol}
z^{s/2+1}{}_2F_{1}\left(1+\frac{s}{2},1+\frac{d}{2},2+\frac{s}{2},-z\right)
= -\frac{\Gamma(\alpha/2)\Gamma(2+s/2)}{\gamma\Gamma(1+d/2)}.
%-\frac{s}{2\gamma}B\left(\frac{s}{2},\frac{\alpha}{2}\right).
\end{equation}
In particular, $R_{0}$ is a linear non-decreasing function of the distance $y_{d+1}$.
\end{itemize}
\end{theorem}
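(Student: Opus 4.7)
My plan is to identify $\mu_Q$ with the signed equilibrium measure $\eta_{Q,B_{R_0}}$ on a closed ball of suitable radius $R_0$ and then invoke Lemma~\ref{lem:inclusion}. Since $Q$ is radial, the uniqueness of $\mu_Q$ (Theorem~\ref{equiv-Min-Fro}(ii)) forces $\mu_Q$ to be rotationally symmetric about the origin, so its support is a union of concentric balls and spherical shells; by Corollary~\ref{cor:admiss}, it is compact.

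For an arbitrary $R>0$, I first construct $\eta_{Q,B_R}$. Since $y\notin B_R$, Lemma~\ref{lem:balayage} together with the transitivity of balayage yields $Q(x)=\gamma U^{\widehat{\delta_y}}(x)$ q.e.\ on $B_R$, where $\widehat{\delta_y}:=\Bal(\delta_y,B_R)$. Then the defining identity \eqref{defsigned} rewrites as $U^{\eta_{Q,B_R}+\gamma\widehat{\delta_y}}\equiv C_{Q,B_R}$ q.e.\ on $B_R$, giving
\[
\eta_{Q,B_R}=\frac{C_{Q,B_R}}{W(B_R)}\,\omega_R\;-\;\gamma\,\widehat{\delta_y},
\]
with $C_{Q,B_R}$ fixed by $\eta_{Q,B_R}(\R^d)=1$. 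Writing $\widehat{\delta_y}=\Bal(\delta_y^w,B_R)$ and splitting $\delta_y^w=\delta_y^w|_{B_R}+\delta_y^w|_{B_R^c}$, the first piece already sits on $B_R$ and contributes, by Lemma~\ref{lem:balayage}, the bounded density $(2y_{d+1})^{\alpha}/(W(S^d)\omega_d|x-y|^{2d-s})$, matching exactly the first term of $H_{y,R}$ in \eqref{muQ}.

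The core of the computation is the explicit evaluation of $\frac{C_{Q,B_R}}{W(B_R)}\omega_R-\gamma\,\Bal(\delta_y^w|_{B_R^c},B_R)$. Exploiting radial symmetry reduces this to sweeping a one-variable radial density onto the ball, which is carried out in closed form via the Euler integral representation of the Gauss hypergeometric function. The result splits into a regular part proportional to $J(x,y)$ as in \eqref{integralJ(x)} and a boundary-singular part proportional to $(R^2-|x|^2)^{-\alpha/2}$ (the latter originating from both the density \eqref{equilball} of $\omega_R$ and the swept tail). Its coefficient, an explicit combination of $C_{Q,B_R}/W(B_R)$, $\omega_d$, $W(S^d)$ and $U^{\omega_R}(y)$ from \eqref{U-ball}, is strictly monotonic in $R$. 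The distinguished value $R=R_0$ is the one at which this coefficient vanishes; setting $z=(R_0/y_{d+1})^2$ and applying a standard contiguous relation of ${}_2F_1$ to convert the function in \eqref{U-ball} into ${}_2F_1(1+s/2,1+d/2,2+s/2,-z)$ reduces the cancellation to equation~\eqref{sol}. The left-hand side of \eqref{sol} increases monotonically from $0$ to $+\infty$, giving uniqueness of $R_0$; since $z$ depends only on $s,d,\gamma$, $R_0=y_{d+1}\sqrt{z}$ is linear in $y_{d+1}$.

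Finally, I verify that the density $-\gamma H_{y,R_0}(x)$ is nonnegative on $B_{R_0}$ and vanishes on the boundary: for $|x|\leq R_0$ and $u\geq 0$ one has $(R_0^2-|x|^2)u+R_0^2+y_{d+1}^2\geq|x|^2+y_{d+1}^2=|x-y|^2$, so using the Beta-function identity $\int_0^{+\infty}u^{\alpha/2-1}/(u+1)\,du=\pi/\sin(\alpha\pi/2)$ gives
\[
\frac{\sin(\alpha\pi/2)}{\pi}\,J(x,y)\;\leq\;\frac{1}{|x-y|^{2d-s}},\qquad |x|\leq R_0,
\]
with equality iff $|x|=R_0$. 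Thus $\eta_{Q,B_{R_0}}$ is a positive measure whose density vanishes on $\partial B_{R_0}$. For $R>R_0$, the boundary-singular coefficient becomes negative, so $\eta_{Q,B_R}$ is negative on a collar of $\partial B_R$, and Lemma~\ref{lem:inclusion}(iii) yields $S_{\mu_Q}\subseteq B_{R_0}$. Lemma~\ref{lem:inclusion}(ii) applied with $\Sigma_1=B_{R_0}$ then gives $\mu_Q=\eta_{Q,B_{R_0}}$, and strict positivity of the density throughout the interior forces $S_{\mu_Q}=B_{R_0}$, completing (i)--(iii). The principal technical obstacle is the closed-form sweeping of $\delta_y^w|_{B_R^c}$ onto $B_R$ and the hypergeometric manipulation leading to the precise integrand in \eqref{integralJ(x)} and to equation \eqref{sol}; checking that the sign of the boundary-singular coefficient is the one needed to apply Lemma~\ref{lem:inclusion}(iii) for $R>R_0$ also requires some care.
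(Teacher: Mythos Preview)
Your approach is essentially the paper's: form the signed equilibrium measure $\eta_{Q,B_R}=-\gamma\,\Bal(\delta_y,B_R)+(1+\gamma m_R)\,\omega_R$, choose $R_0$ so the boundary-singular $(R^2-|x|^2)^{-\alpha/2}$ part cancels, verify nonnegativity of the remaining density, and invoke Lemma~\ref{lem:inclusion}(ii)--(iii). Your positivity check via $(R_0^2-|x|^2)u+R_0^2+y_{d+1}^2\geq |x-y|^2$ is a clean variant of the paper's inequality~\eqref{ineq-Lamb}.

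Two points need repair. First, the left-hand side of \eqref{sol} does \emph{not} tend to $+\infty$ as $z\to\infty$; it increases monotonically to the finite value $\Gamma(\alpha/2)\Gamma(2+s/2)/\Gamma(1+d/2)$ (see the integral representation \eqref{integ-G} of $G(z)$ in the paper). Existence of a solution therefore hinges on the right-hand side of \eqref{sol} lying strictly below this limit, which holds precisely because $-\gamma>1$; this is where the strict admissibility $\gamma<-1$ enters and must be made explicit. Second, your assertion that the boundary-singular coefficient is ``strictly monotonic in $R$'' is not justified and is not obvious (it involves the product $(1+\gamma m_R)c_R$ with $1+\gamma m_R$ changing sign). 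The paper avoids this: it shows only that $H(R):=\lim_{|x|\to R}(R^2-|x|^2)^{\alpha/2}\eta'_{Q,R}(x)$ satisfies $H(0^+)=+\infty$ and $H(R)\to 0^-$ as $R\to\infty$, takes $R_0$ to be the \emph{largest} zero (so that $H(R)<0$ for all $R>R_0$, as required by Lemma~\ref{lem:inclusion}(iii)), and derives uniqueness of $R_0$ a posteriori from the monotonicity of the left side of \eqref{sol}. You should adopt that ordering of the argument.
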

%%%%%%%%%%%%%%%%
\begin{remark}
Theorem \ref{thm:singleattr} extends for general dimension $d$ and $d-2\leq s<d$ the results in \cite[Theorems 2.1 and 2.3]{BDO} for $d=1$ and $0\leq s<1$.
\end{remark}
%\begin{remark}
%%Preferably with the aid of some computational algebra software, it is possible to solve \eqref{sol} and obtain the value of the radius of the support $S_{\mu_Q} = B_{R_0}$. Next,
%We list in Table 1 the radius $R_0$ for different values of the ``distance'' $y_{d+1}$ and the magnitude of the charge $\gamma$, for $d=3$ and $s=2$. Observe that, as it seems natural, for a fixed distance, the greater the magnitude, the smaller is the radius; otherwise, for a fixed magnitude, the greater the distance, the greater the radius.
%\end{remark}
%\begin{table}[h!]
%%\begin{center}
%\centering
%\begin{tabular}{|c|c|c|c|c|c|c|}
%\hline
%%$|\gamma|$ & $|y_{4}|$ & $R_0$     \\[1mm] \hline
%%$1.1$ & $ 1$ & $16.45$  \\[1mm] \hline
%%$2$ & $1$ & $2.70$ \\[1mm] \hline
%%$5$ & $1$ & $1.30$\\[1mm] \hline
%%$1.1$ & $3$ & $49.35$ \\[1mm] \hline
%%$2$ & $3$ & $8.10$ \\[1mm] \hline
%%$5$ & $3$ & $3.91$ \\[1mm] \hline
%$\gamma$ & $-1.1$ & $-2$ & $-5$ & $-1.1$ & $-2$ & $-5$ \\[1mm] \hline
%$y_{4}$ & 1 & 1 & 1 & 3 & 3 & 3\\[1mm] \hline
%$R_0$ & 16.45 & 2.70 & 1.30 & 49.35 & 8.10 & 3.91 \\[1mm] \hline
%\end{tabular}
\begin{figure}[htb]
\centering
  \includegraphics[scale=0.5]{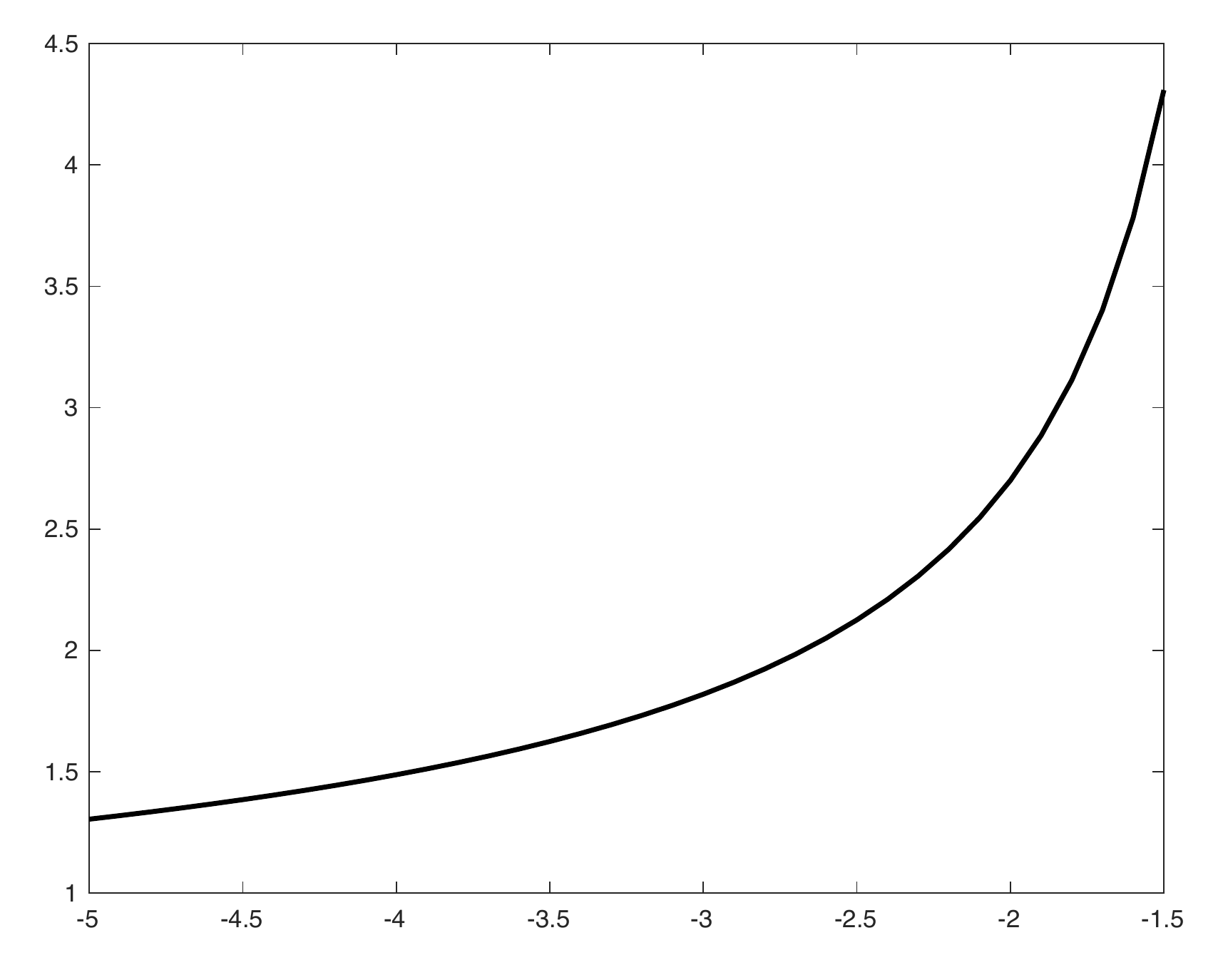}
\caption{The radius $R_0$ of the support $S_{\mu_Q}$ as a function of the attractive charge $\gamma$ ($d=3, s=2, y_{4}=1$)}
\end{figure}
\begin{figure}[htb]
\centering
  \includegraphics[scale=0.5]{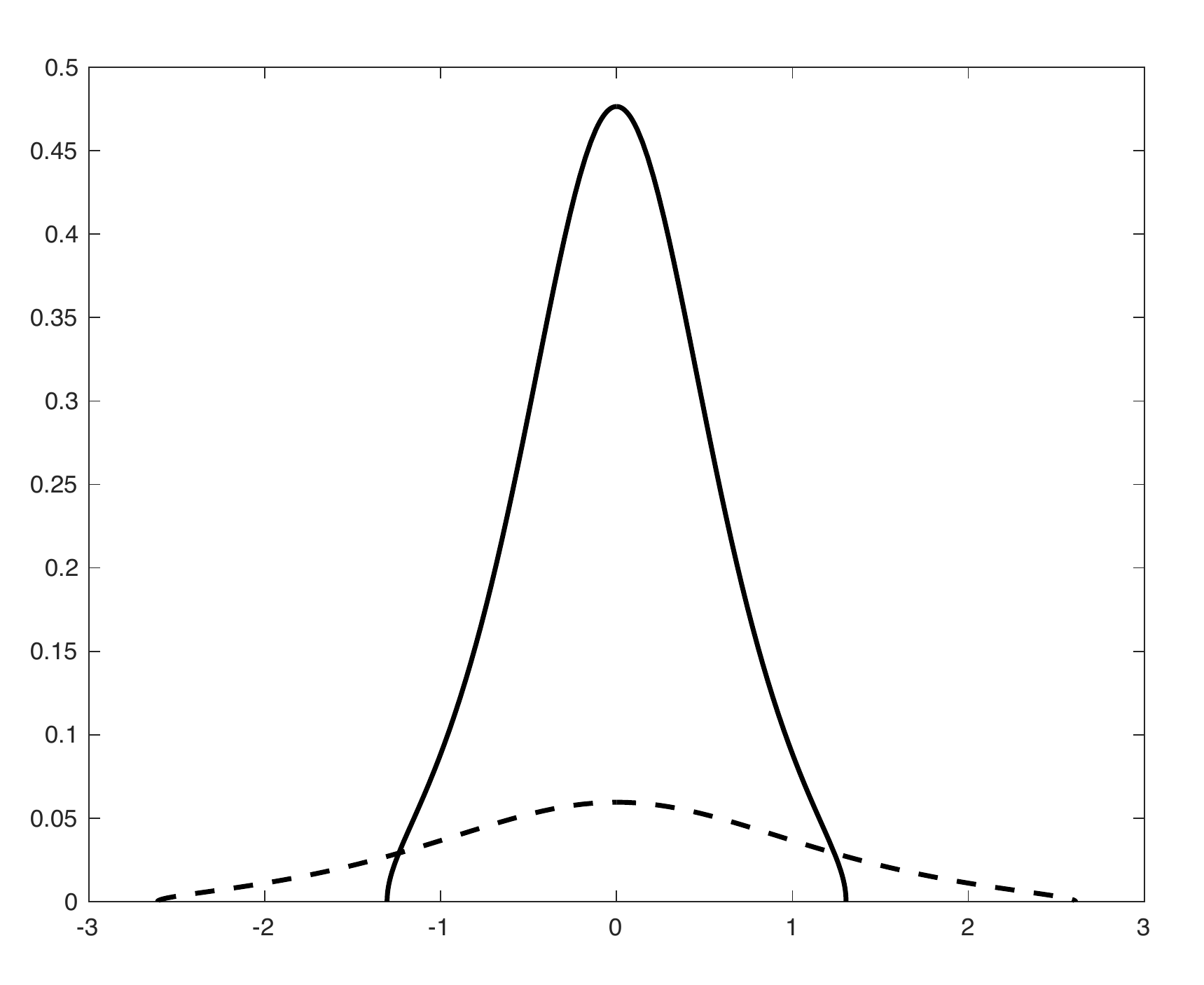}
%  \end{center}
\caption{Density of the equilibrium measure as a function of $r$ for $d=3$, $s=2$, $\gamma=-5$, $y_{4}=1$ (solid line) and $y_{4}=2$ (dashed line)}
\end{figure}
\begin{remark}\label{rem:Newton}
When $s=d-2$ (Newton case), with $d\geq 3$, \cite[Proposition 2.13]{abey}, which in turn extends \cite[Sec. IV.6]{ST}, yields the explicit expression of $S_{\mu_Q}$ and the density of $\mu_Q$. Observe that in \cite{abey} the growth condition \eqref{Q-infty} is required, but it is just to ensure the existence of $\mu_Q$ and the compactness of $S_{\mu_Q}$; the proof of that result easily follows without assuming \eqref{Q-infty}. Indeed, \cite[Proposition 2.13]{abey} shows that if $d\geq 3$ and $Q(x) = Q(|x|) = Q(r)$, with $r^{d-1}\,Q'(r)$ being a non-decreasing function on $[0,\infty)$, then $S_{\mu_Q} = \{x\in \R^d\,:\,r_0\leq |x|\leq R_0\}$, where $r$ is the smallest value of $r>0$ for which $r^{d-1}\,Q'(r) > 0$ for $r>r_0$, and $R_0$ is the smallest positive solution of the equation
\begin{equation}\label{NewtonR}
R^{d-1}\,Q'(R) = d-2.
\end{equation}
Since for our external field \eqref{efsingle} we have that $r^{d-1}\,Q'(r) = - \gamma s r^d\,(r^2+y_{d+1}^2)^{-1-s/2} > 0\,,$ for each $r>0$, and
\begin{equation*}
(r^{d-1}\,Q'(r))' = - \gamma s d r^{d-1} y_{d+1}^2\,(r^2+y_{d+1}^2)^{-2-s/2} > 0\,,
\end{equation*}
we get $r_0 = 0$ and, consequently, $S_{\mu_Q}$ is a closed ball. To determine the radius $R$ of the ball, \eqref{NewtonR} may be used. Namely, for $d=3, s=1, y_4=1$  and different values of $\gamma$, the following values of $R$ are obtained in Table \ref{Newton-tab}.

\begin{table}[h!]
\begin{center}
\begin{tabular}{|c|c|c|c|c|}
\hline
$\gamma$ & $-10.0$ & $-2.5$ & $-1.1$ & $-1.001$\\[1mm] \hline
$R$ & $ 0.524$ & $1.090$ & $3.90$ & $38.73$ \\[1mm] \hline
\end{tabular}
%\vspace{.5cm}
\caption{The radius $R$ of $S_{\mu_Q}$ for different values of the charge $\gamma$ ($d=3, s=1, y_4=1$)}
\label{Newton-tab}
\end{center}
\end{table}

It is easy to check that, as is natural, the higher the size of the attractive charge, the lowest the radius of the ball. Moreover, the result in \cite{abey} provides the expression for the density of $\mu_Q$, namely, in our case,
$$d\mu_Q(x) = \,\frac{-\gamma s r^{d+1}}{(r^2+y_{d+1}^2)^{2+s/2}}\,dr\,d\sigma_{d-1}(u)\,,\,x=ru\,,\,r=|x|\,,$$
with $d\sigma_{d-1}$ denoting the normalized surface measure of the unit sphere $S^{d-1}$ in $\R^d$.
\end{remark}
\begin{remark}
The results in \cite{abey} were partially extended in the recent paper \cite[Theorem I.1]{Bilog} for more general values of $s$, in particular for $d-2<s<d$; but there the conductor is the unit ball in $\R^d$ and the external field $Q$ is assumed to be convex. Furthermore, in \cite[Theorem I.2]{Bilog} the author obtains the expression of the density of the equilibrium measure, provided that its support $S_{\mu_Q}$ is a ball in $\R^d$. This last result also applies to our Theorem \ref{thm:singleattr}, in particular to the expression for the density of the equilibrium measure in \eqref{muQ}, and one can check the coincidence of \eqref{muQ}--\eqref{integralJ(x)} with \cite[Eq. (8)]{Bilog}. However, there is an important difference in the method of proof which makes this part of Theorem \ref{thm:singleattr} of interest itself: while we need to prove first that $S_{\mu_Q}$ is a ball, and take advantage of this proof to find the density of the equilibrium measure, for the proof of \cite[Theorem I.2]{Bilog} the author obtains that density by handling the Fredholm integral equation of the second kind derived from the Frostman's identity in $S_{\mu_Q}$ (see \eqref{Frostman1}--\eqref{Frostman2}).
Furthermore, though our external field is not convex in the whole $\R^d$ it would be possible to adapt the method of proof of \cite[Theorem I.1]{Bilog} to prove that $S_{\mu_Q}$ is a ball centered at the origin in our case.
\end{remark}

\subsection{Weakly admissible setting}\label{w-Q-charges}

If $\Gamma = -1$, assertion (i) of Theorem \ref{thm:general}  guarantees the existence of the equilibrium measure $\mu_Q$, and condition \eqref{sufficRd} applied to the external field \eqref{discretemeas} provides
\begin{equation*}
\sum_{j=1}^k\,\gamma_j y_{j,d+1}^{\alpha}> 0,
\end{equation*}
as a sufficient condition for the compactness of $S_{\mu_Q}$. Thus, we get infinitely many configurations (indeed, a continuum) of charges and distances $\{(\gamma_j, y_{j,d+1}), j=1,\ldots,k\}$ producing compactly supported equilibrium measures.

\textbf{Case (i). A single attractor at $y=(0;y_{d+1})$, $y_{d+1}>0$ and $\gamma=-1$.}

In this case, $\mu_{Q}=\hat\delta_{y}$, the balayage of $\delta_{y}$ on $\R^{d}$, and from Lemma \ref{lem:balayage}, we know that its support is the entire space $\R^{d}$ (see Remark \ref{rem:weakadmiss} above).

Let us focus now in the simplest non--trivial case of two points, that is, an ``attractor--repellent'' pair.

\textbf{Case (ii). An ``attractor--repellent'' pair }
$$y_{1} = (0; y_{1,d+1}), \quad y_{2} = (0; y_{2,d+1})\quad\text{ with }\quad\gamma_1  = -1 -\gamma, \quad\gamma_2=\gamma>0.
$$
Now, the external field is given by
\begin{equation}\label{extfieldattrep}
Q(x) = -\frac{1+\gamma}{(|x|^2+y_{1,d+1}^2)^{s/2}}+\frac{\gamma}{(|x|^2+y_{2,d+1}^2)^{s/2}},
\end{equation}
and, by Lemma \ref{lem:balayage}, the density of the signed equilibrium measure on $\R^{d}$ is given by
\begin{equation}\label{signedattrep}
\eta_{Q,\R^{d}}'(x)=\frac{2^{\alpha}}{{\omega_{d}} W(S^{d})}
\left(\frac{(1+\gamma)y_{1,d+1}^{\alpha}}{|x - y_{1}|^{2d-s}}-\frac{\gamma y_{2,d+1}^{\alpha}}{|x - y_{2}|^{2d-s}}\right).
\end{equation}
We set
\begin{equation}\label{def-g-R}
g=\frac{\gamma}{1+\gamma},\qquad
\rho=\left[g\left(\frac{y_{2,d+1}}{y_{1,d+1}}\right)^{\alpha}\right]^{2/(2d-s)},\qquad
R=\left(\frac{\rho y_{1,d+1}^{2}-y_{2,d+1}^{2}}{1-\rho}\right)^{1/2}.
\end{equation}
where, in the definition of $R$, it is assumed that the argument of the square root is nonnegative.
%%%%%%%%%%%%%%%%%%
\begin{theorem}\label{thm:weakly}
 Let $d-2\leq s<d$. Regarding the equilibrium problem in $\R^d$ in the weakly admissible external field (4.7) three different cases arise according to the value of the quotient $y_{2,d+1}/y_{1,d+1}$:
\begin{itemize}
\item[(i)] If $y_{2,d+1}/y_{1,d+1}\in(0,g^{1/d})$, $S_{\mu_Q} \subset B_R^{c}$,  where $B^{c}_{R}$ denotes the complement of the ball $B_{R}$, and the radius $R$ is given by \eqref{def-g-R}.
\item[(ii)] If $y_{2,d+1}/y_{1,d+1}\in[g^{1/d},g^{-1/\alpha}]$,
%and (4.8) holds or, conversely, $y_{2,d+1} < y_{1,d+1}$ and \eqref{bound1} is satisfied,
then $S_{\mu_Q} = \R^d$ and $\mu_Q = \eta_{Q,\R^d}$.

\item[(iii)] If $y_{2,d+1}/y_{1,d+1}\in(g^{-1/\alpha},\infty)$, $S_{\mu_Q} \subset B_R$, with $R$ also given by \eqref{def-g-R}.
\end{itemize}
\end{theorem}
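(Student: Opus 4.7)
The strategy is to compute explicitly the signed equilibrium measure $\eta_{Q,\mathbb{R}^d}$, analyze the sign of its density as a function of $r=|x|$, and then apply Lemma \ref{lem:inclusion} to locate $S_{\mu_Q}$.

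Existence of $\mu_Q$ follows from Theorem \ref{thm:general}(i): with $Q=U^\nu$ for $\nu=-(1+\gamma)\delta_{y_1}+\gamma\delta_{y_2}$ we have $Q(\infty)=0$, and a direct expansion gives $|x|^s\,Q(x)\to-(1+\gamma)+\gamma=-1$ as $|x|\to\infty$, so condition \eqref{cond2-Q} is fulfilled. For the signed equilibrium measure, since both $y_j$ lie off $\mathbb{R}^d$, Lemma \ref{lem:balayage} supplies the weak balayage $\hat\delta_{y_j}^w$ of each $\delta_{y_j}$, having total mass $1$, finite energy, and density \eqref{pointbalay}. By linearity, the signed measure
\[
\eta:=(1+\gamma)\,\hat\delta_{y_1}^w-\gamma\,\hat\delta_{y_2}^w
\]
has total mass $(1+\gamma)-\gamma=1$, finite energy, and satisfies $U^\eta+Q\equiv 0$ q.e.\ on $\mathbb{R}^d$; hence $\eta=\eta_{Q,\mathbb{R}^d}$, with density precisely \eqref{signedattrep}.

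The heart of the argument is the sign analysis of \eqref{signedattrep}. Writing $r=|x|$ and $|x-y_j|^{2d-s}=(r^2+y_{j,d+1}^2)^{(2d-s)/2}$, the density has the sign of
\[
(r^2+y_{2,d+1}^2)^{(2d-s)/2}-\rho^{(2d-s)/2}(r^2+y_{1,d+1}^2)^{(2d-s)/2},
\]
which, upon taking the $(2d-s)/2$-th root, reduces to the linear-in-$r^2$ inequality $r^2(1-\rho)>\rho\,y_{1,d+1}^2-y_{2,d+1}^2$. Since $\gamma>0$ forces $g\in(0,1)$, one has $g^{1/d}<1<g^{-1/\alpha}$, and setting $\tau:=y_{2,d+1}/y_{1,d+1}$ an elementary computation shows that $\rho<1$ is equivalent to $\tau<g^{-1/\alpha}$ and $\rho\,y_{1,d+1}^2>y_{2,d+1}^2$ is equivalent to $\tau<g^{1/d}$. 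Case analysis then yields exactly: for $\tau<g^{1/d}$ the density is negative on $\{|x|<R\}$ and positive on $\{|x|>R\}$; for $g^{1/d}\le\tau\le g^{-1/\alpha}$ it is nonnegative on all of $\mathbb{R}^d$; and for $\tau>g^{-1/\alpha}$ it is positive on $\{|x|<R\}$ and negative on $\{|x|>R\}$, with $R$ in (i) and (iii) exactly the quantity \eqref{def-g-R}.

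The three assertions follow at once. In case (ii), $\eta_{Q,\mathbb{R}^d}$ is a nonnegative probability measure of finite energy on $\mathbb{R}^d$, so Lemma \ref{lem:inclusion}(ii) applied with $\Sigma_1=\Sigma=\mathbb{R}^d$ forces $\mu_Q=\eta_{Q,\mathbb{R}^d}$, and the strict positivity of the density a.e.\ gives $S_{\mu_Q}=\mathbb{R}^d$. In cases (i) and (iii), the positive part $\eta_{Q,\mathbb{R}^d}^+$ is supported respectively on $\{|x|\ge R\}$ and on $\overline{B_R}$, and the inclusion $\mu_Q\le\eta_{Q,\mathbb{R}^d}^+$ from Lemma \ref{lem:inclusion}(i) yields the claimed $S_{\mu_Q}\subset B_R^c$ and $S_{\mu_Q}\subset B_R$. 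The only mildly delicate point is handling the borderline ratios $\tau=g^{1/d}$, where $R$ collapses to $0$, and $\tau=g^{-1/\alpha}$, where $\rho=1$ and the inequality in $r$ degenerates; both are settled by direct substitution into the density formula.
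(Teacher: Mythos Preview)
Your proof is correct and follows essentially the same route as the paper: compute the signed equilibrium measure $\eta_{Q,\mathbb{R}^d}$ from \eqref{signedattrep}, determine the sign of its density according to the value of $\tau=y_{2,d+1}/y_{1,d+1}$, and then invoke Lemma \ref{lem:inclusion}(i)--(ii). Your write-up is slightly more explicit than the paper's in that you spell out the existence of $\mu_Q$ via Theorem \ref{thm:general}(i) and the identification $\eta_{Q,\mathbb{R}^d}=-\hat\nu$ via Lemma \ref{lem:balayage}, but the argument is otherwise the same.
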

\begin{remark}
We conjecture that in case (i), $S_{\mu_Q}$ is the complement of a ball of radius larger than $R$, and that in case (iii), $S_{\mu_Q}$ is a ball of radius smaller than $R$.
%Note also that the case $y_{2,d+1}=y_{1,d+1}$ corresponds to case (i) at the beginning of this section.
Note that the case where $y_{2,d+1}/y_{1,d+1} = 1$, that is, $y_{2,d+1}=y_{1,d+1}$, is included in part (ii) of the previous theorem. In this case, part of the negative charge cancels with the positive one and it results in a single attractor of charge $-1$ as in case (i) at the beginning of this section. Furthermore, the conclusion is the same if the attractor and the repellent are placed at the same distance of the hyperplanar conductor $\R^d$ but on different half-hyperplanes of $\R^{d+1}$.
\end{remark}
For an illustrative purpose,
%consider the expression (\ref{signedattrep}) of the signed equilibrium in the whole $\R^d$, and note, in particular, its circular symmetry.
%From \eqref{pointbalay} and Section 5.2 below, the density of the signed equilibrium measure $\eta_Q = \eta_{Q,\R^d}$, is given by
%\begin{equation}\label{signedattrep}
%%\begin{split}
%\eta'_{Q}(x)   =  \left((1+\gamma) \hat \delta_{y_1} - \gamma \hat \delta_{y_2}\right)
% = \frac{2^{\alpha}}{\omega_{d}W(S^{d})}\left((1+\gamma)\frac{y_{1,d+1}^{\alpha}}{|x - y_1|^{2d-s}}
%-\gamma\frac{y_{2,d+1}^{\alpha}}{|x - y_2|^{2d-s}}\right).
%%\end{split}
%\end{equation}
%Note the circular symmetry of the density \eqref{signedattrep}.
in Figure \ref{fig:signed} the density of the positive part of the signed equilibrium measure of $\R^2$, with $s=1$, in the presence of the external field created by an attractor of charge $-2$ placed at $(0,0,1)\in \R^3$ and a repellent of charge $1$ located at $(0,0,3)$ is shown; also, in Figure \ref{fig:signedradius} it is plotted that density as a function of $r = \sqrt{x^2+y^2}$. It is clear that the support of the positive part of $\eta_Q$ is a compact subset of $\R^2$. Thus, Lemma \ref{lem:inclusion} implies that $S_{\mu_Q}$ is also a compact subset of $\R^2$. In this case, part (iii) in Theorem \ref{thm:weakly} applies and we have that $S_{\mu_Q} \subset B_R$, with $R = 4.978$.
\begin{figure}[h!]
    \begin{center}
    \includegraphics[scale=.6]{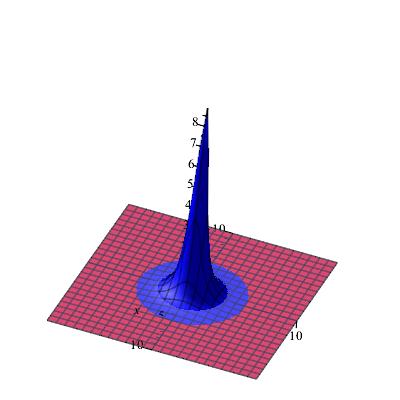}
    \end{center}
    \caption{Density of the positive part of the signed equilibrium measure (blue). In red, the plane $\R^2$.}
    \label{fig:signed}
\end{figure}

\begin{figure}[h!]
    \centering
    \includegraphics[scale=.5]{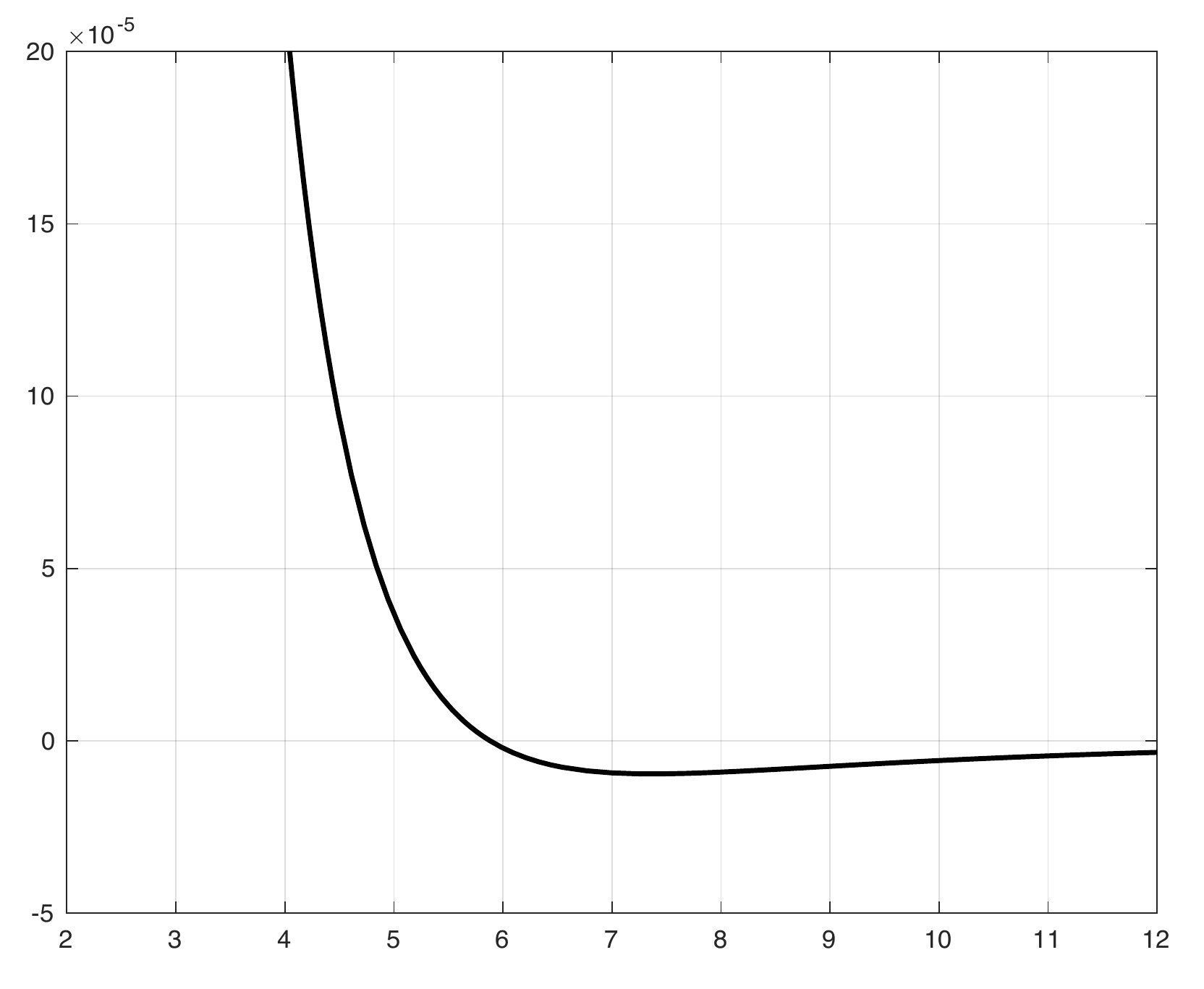}
    %{signedradius2.jpg}
    %\end{center}
    \caption{Density of the signed equilibrium measure $\eta_{Q,\R^{d}}$ as a function of $r$, near the point where it vanishes.}
    \label{fig:signedradius}
\end{figure}

Numerically one can check the validity of the conjectures made after Theorem \ref{thm:weakly}.
%the value of $R$ from \eqref{sol-wa}.
In that connection, Figure \ref{Rvscharge_y2} shows the radius $R_{0}$ of the ball $S_{\mu_Q}$ (case (iii) of Theorem \ref{thm:weakly}) as a function of the charge $\gamma$ and as a function of $y_{2,4}$ with $d=3$, $s=2$ and $y_{1,4}$=1.
\begin{figure}[htb]
\centering
\includegraphics[scale=0.4]{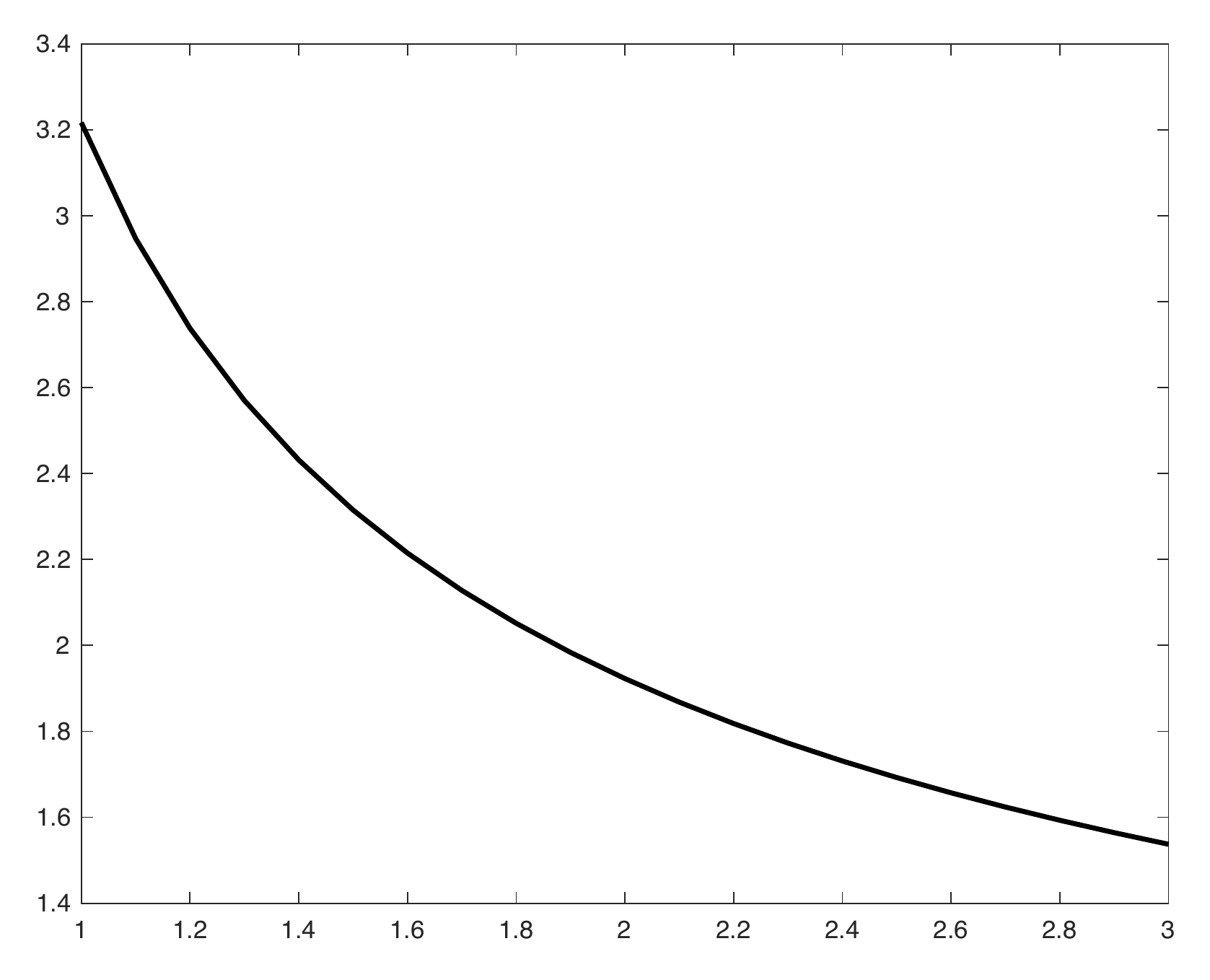}
\includegraphics[scale=0.4]{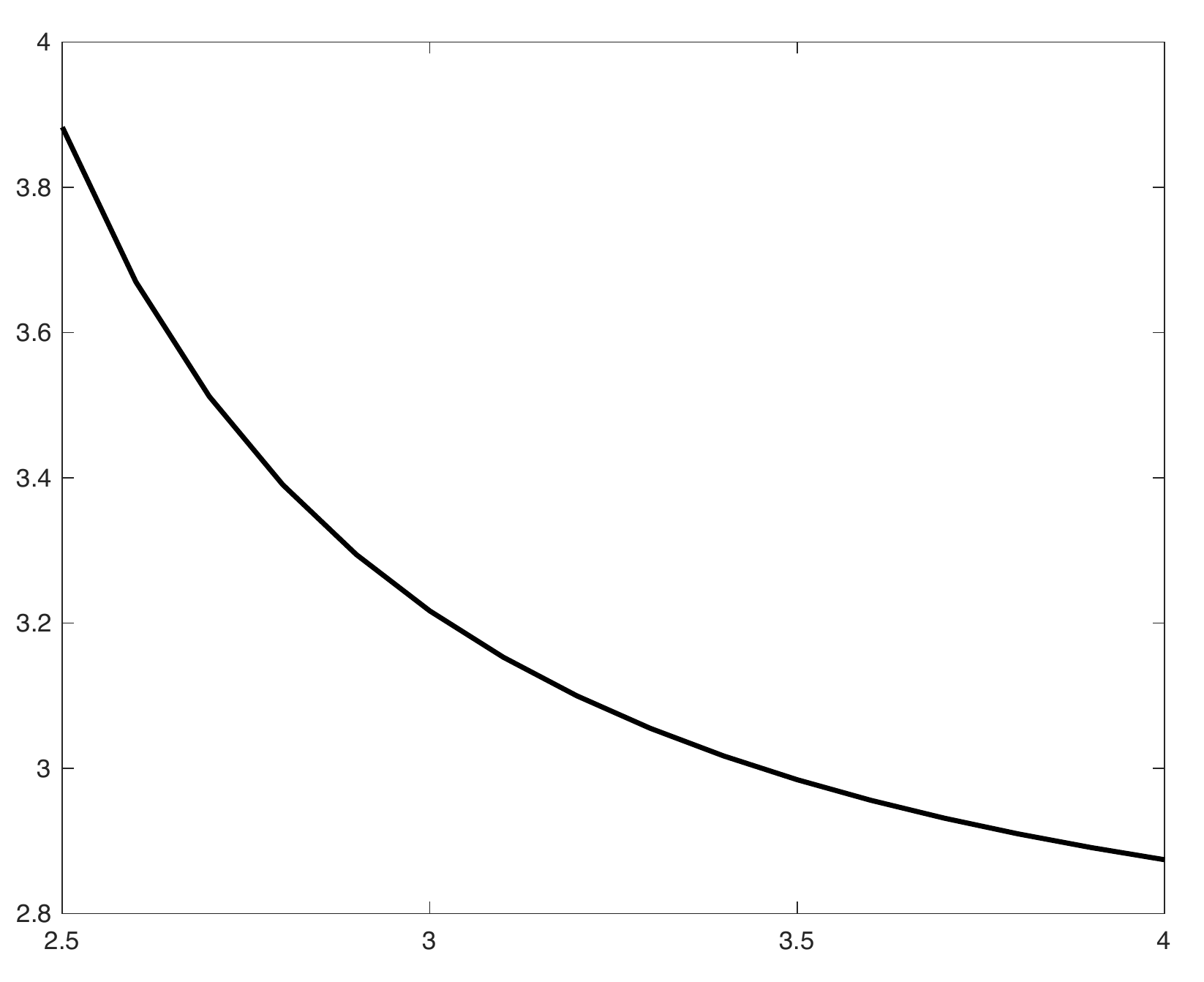}
%  \end{center}
\caption{The radius $R_0$ of the support $S_{\mu_Q}$ as a function of $\gamma$ (left) and as a function of $y_{2,4}$ (right) ($d=3, s=2, y_{1,4}=1$)}
\label{Rvscharge_y2}
\end{figure}
\begin{figure}[htb]
\centering
  \includegraphics[scale=0.5]{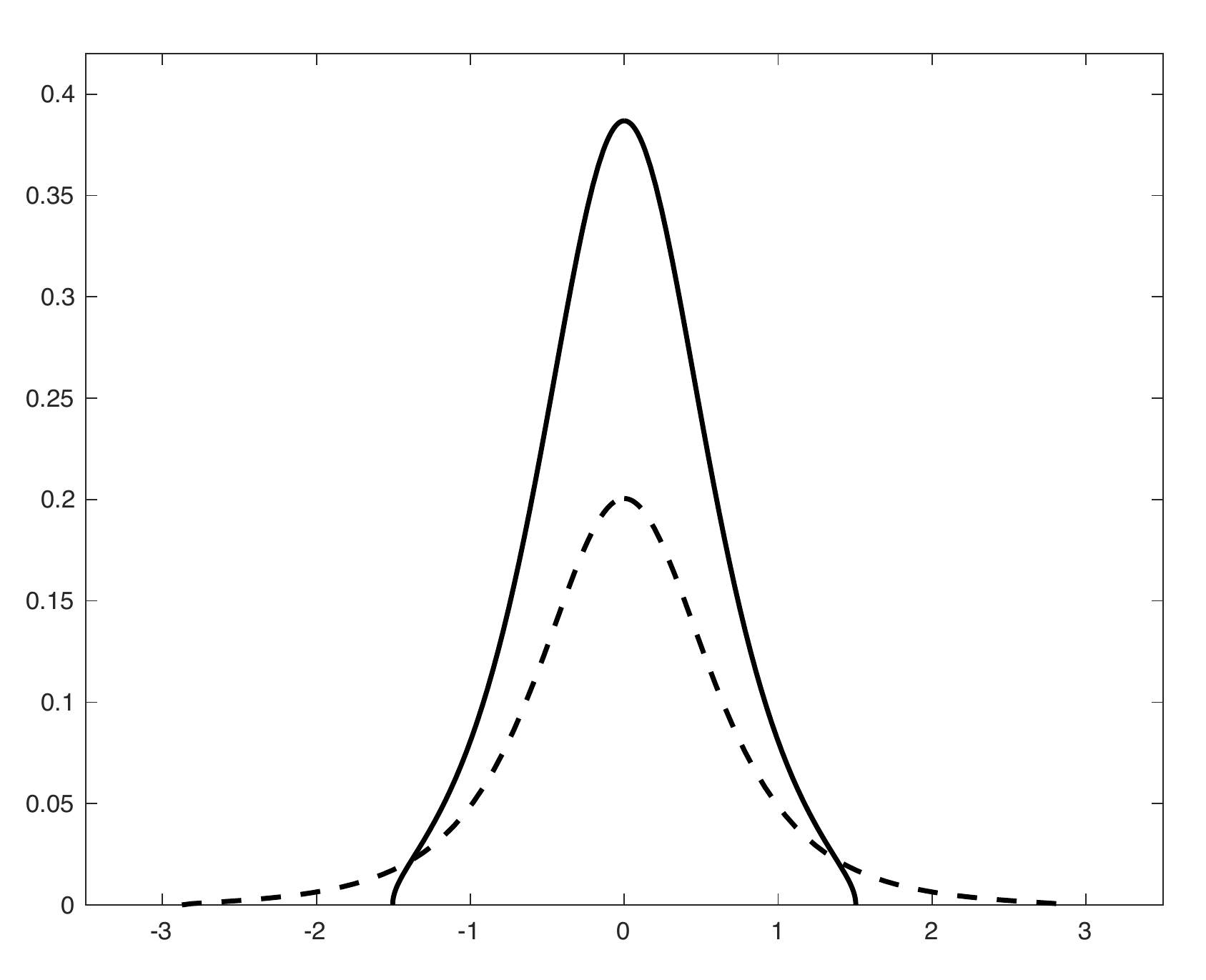}
%  \end{center}
\caption{Density of the equilibrium measure as a function of $r$ for $d=3$, $s=2$, $y_{1,4}=1$, $y_{2,4}=4$, $\gamma=3$ (solid line) and $\gamma=1$ (dashed line)}
\end{figure}
\section{Proofs}
\label{proofs}
\subsection{Proofs of Theorems \ref{equiv-Min-Fro}, \ref{thm:general} and Corollary \ref{cor:deltaorigin}}
We start with the proof of Theorem \ref{equiv-Min-Fro}.
%\begin{proof}[Proof of Theorem \ref{equiv-Min-Fro}]
\\[\baselineskip]
{\bf Proof of Theorem \ref{equiv-Min-Fro}.}
%For this proof, we may assume $0<s<d$.\\
{\bf (i)} The inequality $-\infty<W_{Q}(\Sigma)$ holds because the unweighted energy is positive and $Q$ is lower-bounded on $\Sigma$. Moreover, since $\{x\in\Sigma,~Q(x)<\infty\}$ has positive capacity, there exists an $n\in\N$ such that $\Sigma_{n}:=\{x\in\Sigma,~Q(x)<n\}$ has the same property. Hence, there exists $\mu_{n}\in\PP(\Sigma_{n})$ such that $I_{Q}(\mu_{n})<\infty$.
\\
{\bf (ii)} The uniqueness of a minimizing measure can be proved as usual, see e.g. \cite[Theorem I.1.3]{ST}, based on the fact that for two measures $\mu$ and $\nu$, $I(\mu-\nu)=0$ if and only if $\mu=\nu$, see \cite[Theorem 1.15]{Land}.
\\
{\bf (iii)} The Frostman inequalities can be proved by following the arguments of \cite[Theorem I.1.3]{ST} for weighted logarithmic potentials.
\\
{\bf (iv)} For the characterization of the equilibrium measure, we follow the proof of \cite[Proposition 2.6]{BLW2} : assume $\mu$ satisfies (\ref{Frostman3})-(\ref{Frostman4}).
%the Frostman inequalities with some constant $F$.
Pick any $\nu\in\PP(\Sigma)$, with $I_{Q}(\nu)<\infty$.
%, supported on $\Sigma$. %, such that $I_{Q}(\nu)<I_{Q}(\mu)$.
Writing
$\nu=\mu+(\nu-\mu)$, we have
\begin{equation}\label{decomp}
I_{Q}(\nu)=I_{Q}(\mu)+I(\nu-\mu)+2\int_{\Sigma}(U^{\mu}+Q)(d\nu-d\mu),
\end{equation}
where the right-hand side is well-defined since $Q$ is lower-bounded on $\Sigma$, the energies and weighted energies of $\mu$ and $\nu$ are finite, and the mixed energy satisfies $I(\mu,\nu)\leq I(\mu)I(\nu)$, see \cite[p.\ 82]{Land}. Making use of the Frostman inequalities for $\mu$, we obtain
\begin{equation}\label{ineq-nu-mu}
\int(U^{\mu}+Q)(d\nu-d\mu)\geq F\int d\nu-F\int d\mu=F(\nu-\mu)(\R^{d})=0.
\end{equation}
Moreover, by \cite[Theorem 1.15]{Land}, $I(\nu-\mu)\geq0$. Hence, from (\ref{decomp}) we derive that $I_{Q}(\nu)\geq I_{Q}(\mu)$. We conclude that $\mu$ is a minimizing measure, and, finally, $F=F_{Q}$ by uniqueness. For the last assertion, one may restrict $\nu$ to be in $\PP(S_{\mu_Q})$. Then, (\ref{ineq-nu-mu}) still holds and the end of the argument remains the same.
\qed
%\end{proof}
\\[\baselineskip]
We now proceed with preparations for the proof of Theorem \ref{thm:general}. We will make use of the Kelvin transform $T$ as defined in Section \ref{KBS}. Here, we choose $T$ of
radius $\sqrt{2}$ and center $y=(0,0,\ldots,1)$ in $\R^{d+1}$, which sends $\R^{d}$ onto the $d$-dimensional sphere $S$ in $\R^{d+1}$, centered at 0 of radius 1.
We keep the notation $x^{*}=T(x)$ so that, in particular,
\begin{equation}\label{corresp-x-x*}
|x-y||x^{*}-y|=2.
\end{equation}
%Since we assume $\Sigma\neq\R^{d}$, we can suppose without loss of generality, that $0\not\in \Sigma$, hence
Note that \textcolor{black}{$\Sigma^{*}=T(\Sigma)\cup\{y\}$} is a closed subset of $S$, hence a compact set in $\R^{d+1}$.

We first check how the minimization problem \eqref{weightenergy} and the Frostman inequalities \eqref{Frostman1}-\eqref{Frostman2} translate when we apply the Kelvin transform. We know from
(\ref{rel-pot})-(\ref{rel-meas}) that
for Riesz potentials and energies, we have the following relations,
%under the Kelvin map $T$, in the following way
$$
U^{\mu^{*}}(x^{*})=2^{-s/2}|x-y|^{s}U^{\mu}(x),\quad I_{s}(\mu^{*})=I_{s}(\mu),$$
where
$$d\mu^{*}(t^{*})=2^{s/2}\frac{d\mu(t)}{|t-y|^{s}}=2^{-s/2}|t^{*}-y|^{s}d\mu(t).
$$
%Also, it will be convenient to define
%$$
%Q_{0}(x)=Q(x)+\frac{1}{|x|^{s}},\qquad Q_{0}^{*}(x)=Q_{0}(x^{*}).
%$$
Note that, by (\ref{rel-mass-pot}), the condition $\mu\in\PP(\Sigma)$ translates into
$2^{s/2}U^{\mu^{*}}(y)=1$.

Then, with $Q^{*}(x)=Q(x^{*})$, the minimization problem (\ref{weightenergy}) becomes
\begin{equation*}%\label{def-min-pb2}
\min\left(I_{s}(\mu^{*})+2\int \frac{2^{s/2}Q^{*}(t)}{|t-y|^{s}}d\mu^{*}(t)\right),
\end{equation*}
where the minimum is taken over all measures $\mu^{*}$ supported on $\Sigma^{*}$ such that $2^{s/2}U^{\mu^{*}}(y)=1$.
The Frostman inequalities (\ref{Frostman1})-(\ref{Frostman2}) on $\Sigma$ become the following ones on $\Sigma^{*}$,
\begin{align}
U^{\mu_{Q}^{*}}(x)+2^{s/2}\frac{Q^{*}(x)-F}{|x-y|^{s}} & \geq 0,\quad\text{q.e. on }\Sigma^{*}, \label{F3}\\[5pt]
U^{\mu_{Q}^{*}}(x)+2^{s/2}\frac{Q^{*}(x)-F}{|x-y|^{s}} & \leq 0, \quad x\in S_{\mu_Q}^{*}=S_{\mu_{Q}^{*}}, \label{F4}
\end{align}
\textcolor{black}{where now we denote $F = F_Q$, the Robin constant}. As a preliminary result, we show the existence, under some assumptions on $Q^{*}$, of a measure $\mu^{*}$ on $\Sigma^{*}$ satisfying $2^{s/2}U^{\mu^{*}}(y)=1$ and the above Frostman inequalities.
% with some constant $F$.
% when $F<0$ (this is also equivalent to showing that the minimization problem (\ref{def-min-pb2}) has a solution for some $c$).
\begin{lemma}\label{prelim}
%i) Assume $c>1$. There exists a measure $\mu^{*}$ on the compact set $E^{*}$ with $U^{*}(0)=1$ which achieves the minimum of
%$I_{s}(\mu)-2c\|\mu\|$ over all such measures.
%\\
%ii) In terms of Frostman inequalities, we have the following.
Assume
\begin{equation}\label{cond-Q*}
Q^{*}(x)\leq Q^{*}(y)-2^{-s}|x-y|^{s},\quad\text{in a neighborhood of }y,
\end{equation}
or
\begin{equation}\label{cond2-Q*}
\lim_{x\to y,~x\in\Sigma^{*}\setminus P} 2^{s}\frac{Q^{*}(x)- Q^{*}(y)}{|x-y|^{s}}\leq-1,
\end{equation}
where $P$ is thin at $y$.
Then there exists a constant $F\leq Q^{*}(y)$ and a measure $\mu^{*}$ on the compact set $\Sigma^{*}$ with $2^{s/2}U^{\mu^{*}}(y)=1$ such that
\begin{align}
U^{\mu^{*}}(x)+2^{s/2}\frac{Q^{*}(x)-F}{|x-y|^{s}} & \geq 0,\quad\text{q.e. on }\Sigma^{*}, \label{F3bis}
\\[5pt]
U^{\mu^{*}}(x)+2^{s/2}\frac{Q^{*}(x)-F}{|x-y|^{s}} & \leq 0, \quad x\in S_{\mu^{*}}. \label{F4bis}
\end{align}
\end{lemma}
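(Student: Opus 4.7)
The plan is to solve the minimization problem directly on the compact set $\Sigma^*$. Absorbing the constant $Q^*(y)$ term via the constraint $2^{s/2}U^\sigma(y)=1$, the relevant functional on
\[
\mathcal{M} := \bigl\{\sigma\geq 0:\,\supp(\sigma)\subset \Sigma^*,\ 2^{s/2}U^\sigma(y) = 1\bigr\}
\]
is
\[
\mathcal{E}(\sigma) := I(\sigma) + 2\int \tilde Q(x)\,d\sigma(x), \qquad \tilde Q(x) := 2^{s/2}\,\frac{Q^*(x)-Q^*(y)}{|x-y|^s}.
\]
Hypothesis \eqref{cond-Q*} then translates into $\tilde Q(x)\leq -2^{-s/2}$ in a neighborhood of $y$, and \eqref{cond2-Q*} yields the analogous bound off a set thin at $y$. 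I would first verify that $\mathcal{M}$ is non-empty by Kelvin-transforming a compactly supported probability measure from $\{Q<\infty\}\cap\Sigma$ (which has positive capacity), and that $W:=\inf_{\mathcal{M}}\mathcal{E}$ is finite, bounding $\int \tilde Q\,d\sigma$ below near $y$ by $-C\cdot U^\sigma(y)=-C\cdot 2^{-s/2}$ via the lower semicontinuity of $Q^*$ at $y$, while on the complement $\tilde Q$ is bounded.

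Taking a minimizing sequence $\sigma_n\in\mathcal{M}$, the bound $I(\sigma_n)\geq \sigma_n(\Sigma^*)^2/\diam(\Sigma^*)^s$ controls the total mass, and Banach--Alaoglu yields a weak-$*$ limit $\mu^*$. The crucial and most delicate step, which I expect to be the main technical obstacle, is to verify $\mu^*\in\mathcal{M}$: lower semicontinuity of $|\cdot-y|^{-s}$ only gives $U^{\mu^*}(y)\leq 2^{-s/2}$, and strict inequality would signal mass escaping to $y$ along shrinking neighborhoods. Here \eqref{cond-Q*} (or \eqref{cond2-Q*}, after invoking Lemma \ref{Mizu} to reduce to $\Sigma^*\setminus P$) enters decisively. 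Decomposing $\sigma_n = \sigma_n^{\mathrm{far}} + \sigma_n^{\mathrm{near}}$ around a small neighborhood of $y$, the bound $\tilde Q\leq -2^{-s/2}$ on the near part allows one to replace $\sigma_n$ by a rescaled far part (with a small correction elsewhere on $\Sigma^*\setminus\{y\}$ restoring the constraint), producing an admissible measure with strictly smaller $\mathcal{E}$ and contradicting minimality. Combined with the standard lower semicontinuity of $\mathcal{E}$, this shows that $\mu^*$ attains $W$.

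The Frostman inequalities then follow by the usual variational argument on a convex constraint set: comparing $\mu^*$ with $(1-t)\mu^* + t\nu$ for $\nu\in\mathcal{M}$, the first-order condition yields, via a Lagrange multiplier $c$ associated to $2^{s/2}U^\sigma(y)=1$,
\[
U^{\mu^*}(x) + \tilde Q(x) \geq \frac{c}{|x-y|^s}\quad \text{q.e. on }\Sigma^*, \qquad \text{with equality on }S_{\mu^*},
\]
extracted from test measures of the form $\nu_0/(2^{s/2}U^{\nu_0}(y))$. Rearranging is precisely \eqref{F3bis}-\eqref{F4bis} with $F := Q^*(y)+c\cdot 2^{-s/2}$. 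Finally, the bound $F\leq Q^*(y)$ (equivalently $c\leq 0$) follows by pulling $\mu^*$ back through the inverse Kelvin transform, using \eqref{rel-pot}-\eqref{rel-meas}: the pullback is a probability measure on $\Sigma$ satisfying \eqref{Frostman1}-\eqref{Frostman2} with constant $F$, hence by Theorem \ref{equiv-Min-Fro}(iv) coincides with the equilibrium measure $\mu_Q$ with Robin constant $F_Q=F$; Remark \ref{Rem-Frost}, which uses only A1-A2 and the existence of $\mu_Q$, then delivers $F\leq Q(\infty) = Q^*(y)$.
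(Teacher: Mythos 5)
Your approach is genuinely different from the paper's. The paper does not do a one-shot constrained minimization on $\Sigma^*$; it truncates, solving the weighted problem on the compact sets $\Sigma_n = T(\Sigma_n^*)$ (equivalently on $\Sigma_n^* = \Sigma^* \setminus B(y,1/n)$), obtains measures $\mu_n^*$ with the \emph{Frostman inequalities} and constants $F_n$, then passes to a weak-$*$ subsequential limit $\mu^*$. The constraint $2^{s/2}U^{\mu^*}(y)=1$ is recovered at the very end: assuming $U^{\mu^*}(y)<2^{-s/2}$ forces $Q^*(y)>F$, whence the second Frostman inequality for $\mu_n^*$ together with the continuity of $Q^*$ at $y$ shows that the supports $S_{\mu_n^*}$ avoid a fixed ball around $y$ for $n$ large; on the complement of that ball the kernel $|\cdot-y|^{-s}$ is continuous and bounded, so $U^{\mu_n^*}(y)\to U^{\mu^*}(y)$, contradicting $U^{\mu_n^*}(y)\equiv 2^{-s/2}$. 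The Frostman inequalities on the \emph{approximating} measures are thus the engine that controls mass escape.

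Your argument has a genuine gap precisely at the step you yourself flag as ``crucial and most delicate.'' A minimizing sequence $\sigma_n\in\mathcal{M}$ carries no a priori Frostman structure, and the principle of descent only yields $U^{\mu^*}(y)\leq 2^{-s/2}$. The sketched remedy — remove the near part $\sigma_n^{\mathrm{near}}$, rescale, add a ``small correction elsewhere,'' and conclude a strict decrease of $\mathcal{E}$ — does not hold up quantitatively. In the dangerous scenario (mass $m_n\to 0$ concentrated at distance $r_n\to 0$ from $y$ with $m_n/r_n^s$ bounded away from $0$), the external-field gain from removing the near part is $O(m_n)\to 0$ and its self-energy is $O(r_n^s)\to 0$, yet the correction $\tau$ required to restore the lost potential $p_n\gtrsim\delta>0$ at $y$ from a location at distance $\gtrsim d>0$ must carry mass $\gtrsim \delta d^s$ bounded away from zero, bringing an $O(1)$ cost to $I(\tau)+2I(\tau,\sigma_n^{\mathrm{far}})$. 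No sign-definite comparison emerges, so no contradiction with minimality. In addition, the ``standard lower semicontinuity of $\mathcal{E}$'' is not standard here: $\tilde Q$ is not bounded below near $y$, and lower semicontinuity of $\sigma\mapsto\int\tilde Q\,d\sigma$ holds only after exploiting the constraint $U^\sigma(y)\leq 2^{-s/2}$ together with the lower semicontinuity of $Q^*$ at $y$; this needs to be spelled out. (A repairable route within your framework would be to minimize over the closure $\bar{\mathcal{M}}=\{\sigma\geq 0\text{ on }\Sigma^*:\,2^{s/2}U^\sigma(y)\leq 1\}$, establish the Euler--Lagrange/Frostman inequality for that minimizer, and then argue that the inequality constraint must bind, since if it were slack the inequality $U^{\mu^*}+\tilde Q\geq 0$ q.e.\ near $y$ combined with $\tilde Q\leq -2^{-s/2}$ and the fine limit $U^{\mu^*}(x)\to U^{\mu^*}(y)$ would force $U^{\mu^*}(y)\geq 2^{-s/2}$. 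That is, however, a different argument from the one you wrote.) The remaining components of your plan — the translation of the functional under the Kelvin transform, the Lagrange multiplier giving $F=Q^*(y)+c\cdot 2^{-s/2}$, and deducing $F\leq Q^*(y)$ by pulling back to $\Sigma$, invoking Theorem \ref{equiv-Min-Fro}(iv) and Remark \ref{Rem-Frost} — are sound and a nice alternative to the paper's direct argument for that last bound, but they all sit downstream of the existence step that is currently unjustified.
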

\begin{remark}
Recall that (\ref{F3bis})-(\ref{F4bis}) on $\Sigma^{*}$ correspond, via the Kelvin transform, to \eqref{Frostman1}-\eqref{Frostman2} on $\Sigma$.% when $Q$ has the special form (\ref{def-Q}).
\end{remark}
\begin{proof}[Proof of Lemma \ref{prelim}]
We will obtain the measure $\mu^{*}$ by considering the weak-* limit of a sequence of measures $\mu_{n}^{*}$ solving the problem on the set $\Sigma_{n}^{*}$, obtained from $\Sigma^{*}$ by removing its intersection with the open ball centered at $y$ with radius $1/n$.
The fact that there exists a measure $\mu_{n}^{*}$ on $\Sigma_{n}^{*}$ with $2^{s/2}U^{\mu_{n}^{*}}(y)=1$  satisfying for some $F_{n}$ the Frostman inequalities
\begin{align}
U^{\mu^{*}_{n}}(x)+2^{s/2}\frac{Q^{*}(x)-F_{n}}{|x-y|^{s}} & \geq 0,\quad\text{q.e. on }\Sigma^{*}_{n}, \label{Fn1}
\\[5pt]
U^{\mu^{*}_{n}}(x)+2^{s/2}\frac{Q^{*}(x)-F_{n}}{|x-y|^{s}} & \leq 0, \quad x\in S_{\mu^{*}_{n}}, \label{Fn2}
\end{align}
just follows by considering the corresponding problem, via the Kelvin transform, on the compact set $\Sigma_{n}=T(\Sigma_{n}^{*})$ (this just uses the fact that $Q(x)=Q^{*}(x^{*})$ is \textcolor{black}{lower semicontinuous} on $\Sigma_{n}$).
Next, since
$$
\frac{\|\mu_{n}^{*}\|}{\diam(\Sigma^{*})^{s}}\leq U^{\mu_{n}^{*}}(y)=2^{-s/2},
$$
the masses of the $\mu_{n}^{*}$ are uniformly bounded, and we may consider a subsequence (still denoted by $\mu_{n}^{*}$) which converges weak-* to some measure $\mu^{*}$. The $F_{n}$'s are lower bounded since, from (\ref{Fn2}) and the fact that a Riesz potential is positive, we have, for $x\in S_{\mu^{*}_{n}}$,
$$
-\infty<\inf_{x\in \Sigma^{*}}Q^{*}(x)\leq Q^{*}(x)\leq F_{n}\,.
$$
They are also upper bounded. Indeed, we know that $U^{\mu^{*}}$ is finite q.e. and, by the \textcolor{black}{ lower envelope theorem \textcolor{black}{(see \cite[Theorem 11]{Br2} and \cite[Theorem 3.8, p. 190]{Land}})}
$$
\liminf_{n}U^{\mu_{n}^{*}}(x)=U^{\mu^{*}}(x)\quad\text{q.e.} \ {\rm \textcolor{black}{on}}\ \R^{d}.
$$
Thus, in view of (\ref{Fn1}), and possibly by considering a subsequence, we can find some $x_{0}\neq y\in \Sigma^{*}$ such that \textcolor{black}{for all $n$}
$$
\frac{F_{n}}{|x_{0}-y|^{s}}\leq 2^{-s/2}U^{\mu^{*}}(x_{0})+\frac{Q^{*}(x_{0})}{|x_{0}-y|^{s}} +1<\infty,
$$
(here we also use that $\{x\in \Sigma^{*},~Q^{*}(x)<\infty\}$ is of positive capacity).
Consequently, we may again consider a subsequence so that $F_{n}$ tends to some constant $F$ as $n$ goes large. Taking the limit in (\ref{Fn1}) and making use of the lower envelope theorem \textcolor{black}{ and the countable subadditivity of the capacity on Borel sets},  we get \textcolor{black}{\eqref{F3bis}.}
%$$
%U^{\mu^{*}}(x)+2^{s/2}\frac{Q^{*}(x)-F}{|x-y|^{s}}  \geq 0,\quad\text{q.e. on }\Sigma^{*}.
%$$

\textcolor{black}{To prove} \eqref{F4bis}, recall from \cite[Eq.(0.1.10)]{Land} that
$$
S_{\mu^{*}}\subset\bigcap_{N=1}^{+\infty}\bar{\bigcup_{n=N}^{+\infty}S_{\mu_{n}^{*}}}.
$$
Hence, for $x\in S_{\mu^{*}}$, we can find a sequence $x_{n}\to x$ with $x_{n}\in S_{\mu_{n}^{*}}$. Making use of the principle of descent, we get
$$
U^{\mu^{*}}(x)+2^{s/2}\frac{Q^{*}(x)-F}{|x-y|^{s}}\leq \liminf_{n}U^{\mu^{*}_{n}}(x_{n})+\liminf_{n}2^{s/2}\frac{Q^{*}(x_{n})-F_{n}}{|x_{n}-y|^{s}}
\leq 0.
$$
Next, we show that $F\leq Q^{*}(y)$. For some set $A_{0}$ thin at $y$, we have
$$
\lim_{x\to y,~x\not\in A_{0}}U^{\mu^{*}}(x)=U^{\mu^{*}}(y)\leq\liminf_{n}U^{\mu_{n}^{*}}(y)=2^{-s/2},
$$
where we refer to \cite[Theorem 5.1, p.\ 79]{M} for the equality and to the principle of descent for the inequality.
Now, rewriting (\ref{F3bis}) as
$$
U^{\mu^{*}}(x)+2^{s/2}\frac{Q^{*}(x)-Q^{*}(y)}{|x-y|^{s}}+2^{s/2}\frac{Q^{*}(y)-F}{|x-y|^{s}} \geq 0,\quad\text{q.e. on }\Sigma^{*},
$$
we get, together with (\ref{cond-Q*}) or (\ref{cond2-Q*}), that the sum of the first two terms is less than or equal to $0$ q.e. near $y$. The inequality $Q^{*}(y)\geq F$ follows.

Finally we show that $2^{s/2}U^{\mu^{*}}(y)=1$. Assume to the contrary that $2^{s/2}U^{\mu^{*}}(y)<1$. Then, the previous reasoning shows that $Q^{*}(y)>F$.
\textcolor{black}{But (\ref{Fn2}) entails that
$$
Q^{*}(x)-F_{n}\leq|x-y|^{s},\qquad x\in S_{\mu_{n}^{*}}.
$$
From the continuity of $Q^{*}$ at $y$ and the convergence of $F_{n}$ to $F$, we derive that, for $x$ close to $y$, and large $n$, 
$$
0<\frac12(Q^{*}(y)-F)\leq|x-y|^{s},\qquad x\in S_{\mu_{n}^{*}},
$$
showing, that for $n$ large, $S_{\mu_{n}^{*}}$ does not contain a fixed ball centered at $y$.
%---------\\
%But (\ref{F4bis}) entails that
%$$
%\frac{Q^{*}(x)-F}{|x-y|^{s}}  \leq 0, \quad x\in S_{\mu^{*}},
%$$
%which %, together with the continuity of $Q^{*}$ at y,
%shows that $y\not\in S_{\mu^{*}}$. 
Hence %$U^{\mu^{*}}$ is continuous at $y$ and 
$$
2^{s/2}U^{\mu^{*}}(y)=\lim_{n}2^{s/2}U^{\mu_{n}^{*}}(y)=1,
$$ 
a contradiction.}
\end{proof}
\textcolor{black}{We will also make use of the following elementary lemma.
\begin{lemma}\label{elem-lem}
Let $\mu$ be a probability measure and $U^{\mu}$ be its potential. Let $\eps>0$. Then there exists $C>0$ such that $|x|^{s}U^{\mu}(x)\geq1-\eps$ when $|x|\geq C$.
\end{lemma}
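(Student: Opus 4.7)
The plan is to write
$$|x|^{s}U^{\mu}(x) = \int \frac{|x|^{s}}{|x-t|^{s}}\,d\mu(t),$$
and split the integration over $\{|t|\leq R\}$ and $\{|t|>R\}$, then exploit the positivity of the integrand together with the fact that $\mu$ is a probability measure. Since the integrand is non-negative, we may simply discard the tail contribution, provided that the mass of this tail is small; and on the compact piece $\{|t|\leq R\}$ the ratio $|x|^s/|x-t|^s$ will be close to $1$ whenever $|x|$ is large compared with $R$.

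More precisely, given $\eps>0$, I would first choose $R>0$ large enough that $\mu(\{|t|\leq R\})\geq 1-\eps/2$. This is possible because $\mu$ is a probability measure finite on compact sets, so $\mu(\{|t|\leq R\})\to 1$ as $R\to+\infty$ by monotone convergence. Next, pick $\delta>0$ small enough that $(1+\delta)^{-s}\geq 1-\eps/2$, and set $C:=R/\delta$. For any $x$ with $|x|\geq C$ and any $t$ with $|t|\leq R$, the triangle inequality yields
$$|x-t|\leq |x|+R\leq |x|(1+R/|x|)\leq |x|(1+\delta),$$
and therefore
$$\frac{|x|^{s}}{|x-t|^{s}}\geq (1+\delta)^{-s}\geq 1-\eps/2.$$

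Combining these estimates and dropping the non-negative contribution from $\{|t|>R\}$, we get, for every $|x|\geq C$,
$$|x|^{s}U^{\mu}(x)\geq \int_{|t|\leq R}\frac{|x|^{s}}{|x-t|^{s}}\,d\mu(t)\geq \Bigl(1-\frac{\eps}{2}\Bigr)\mu(\{|t|\leq R\})\geq \Bigl(1-\frac{\eps}{2}\Bigr)^{2}\geq 1-\eps,$$
which is the claimed inequality. There is no substantive obstacle here; the proof is a direct two-line $\eps/2$--argument relying only on the finiteness of $\mu$ and the triangle inequality, and it requires neither thinness nor the more delicate Lemma \ref{Mizu}.
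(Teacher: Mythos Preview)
Your proof is correct and is in fact more elementary than the paper's. The paper argues via the Kelvin transform $T$ of center $y=(0,\ldots,0,1)$ and radius $\sqrt{2}$: from the relations $|x|^{s}U^{\mu}(x)=(|x|/|x-y|)^{s}\,2^{s/2}U^{\mu^{*}}(x^{*})$ and $2^{s/2}U^{\mu^{*}}(y)=\mu(\R^{d})=1$, the lower semicontinuity of $U^{\mu^{*}}$ at $y$ gives a neighborhood $V_{y}$ on which $2^{s/2}U^{\mu^{*}}\geq 1-\eps/2$, and this pulls back under $T$ to a neighborhood of infinity where $|x|^{s}U^{\mu}(x)\geq 1-\eps$ after adjusting constants. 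Your route bypasses the Kelvin machinery entirely: the direct splitting over $\{|t|\leq R\}$ and $\{|t|>R\}$, combined with the triangle inequality and the positivity of the kernel, yields the same conclusion with no appeal to potential-theoretic tools beyond the definition. The paper's approach has the advantage of reusing the Kelvin framework that is already set up for the surrounding arguments, but yours is self-contained and arguably cleaner for this isolated lemma.
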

\begin{proof}
Making use of the Kelvin transform introduced before (\ref{corresp-x-x*}), we have, see (3.14) and (3.15),
$$
|x|^{s}U^{\mu}(x)=\frac{|x|^{s}}{|x-y|^{s}}2^{s/2}U^{\mu^{*}}(x^{*})$$
and
$$
2^{s/2}U^{\mu^{*}}(y)=2^{s/2}\int\frac{d\mu^{*}(t^{*})}{|t^{*}-x|^{s}}
=\int d\mu(t)=1.
$$
By lower semicontinuity of the potential $U^{\mu^{*}}$, for $\eps>0$, there is a neighborhood $V_{y}$ of $y$ such that
$$
\forall x^{*}\in V_{y},\quad 2^{s/2}U^{\mu^{*}}(x^{*})\geq 2^{s/2}U^{\mu^{*}}(y)-\eps/2=1-\eps/2.
$$
Applying the Kelvin transform again, we get a neighborhood of the point at infinity, namely some $C\geq0$ such that for $|x|\geq C$,
$$
%\forall x\in V_{\infty},\quad
|x-y|^{s}U^{\mu}(x)=(|x|^{2}+1)^{s/2}U^{\mu}(x)\geq1-\eps/2.
$$
Hence,
$$
(1+1/C^{2})^{s/2}|x|^{s}U^{\mu}(x)\geq1-\eps/2
$$
%Since $|x|/|x-y|\to1$ as $x\to\infty$ 
and the assertion of the lemma follows by choosing $C$ large enough.
\end{proof}
} 
\noindent
{\bf Proof of Theorem \ref{thm:general}.}
{\bf (i)} The Riesz energy problem on $\Sigma$ and the associated Frostman inequalities are equivalent, via the Kelvin transform, to the Frostman inequalities on $\Sigma^{*}$, considered in Lemma \ref{prelim}.
Moreover, from (\ref{corresp-x-x*}), we see that assumptions (\ref{cond-Q*}) or  (\ref{cond2-Q*})
are respectively equivalent, under the Kelvin transform, to
\begin{equation*}%\label{cond-Q*}
Q(x)\leq Q(\infty)-|x-y|^{-s} = Q(\infty)-(|x|^{2}+1)^{-s/2},\quad\text{in a neighborhood of }\infty,
\end{equation*}
or
\begin{equation*}%\label{cond2-Q*}
\lim_{|x|\to+\infty,~x\in\Sigma\setminus P} |x-y|^{s}(Q(x)- Q(\infty))
=\lim_{|x|\to+\infty,~x\in\Sigma\setminus P} (|x|^{2}+1)^{s/2}(Q(x)- Q(\infty))\leq-1,
\end{equation*}
where $P$ is thin at $\infty$.
Since these conditions are slightly weaker than (\ref{cond-Q}) and (\ref{cond2-Q}), i) follows from Lemma \ref{prelim} and assertion iv) in Theorem \ref{equiv-Min-Fro}.
\\[.5\baselineskip]
{\bf (ii)} Assume that $\mu_{Q}$ has unbounded support (recall it satisfies \eqref{Frostman1}-\eqref{Frostman2}). From Remark \ref{Rem-Frost} we infer that $Q(\infty)\geq F_{Q}$.
Also, \eqref{Frostman2} implies that $Q(x)\leq F_{Q}$ for $x\in S_{\mu_{Q}}$, and since $S_{\mu_{Q}}$ is unbounded, we derive that $Q(\infty)\leq F_{Q}$. Thus, $Q(\infty)=F_{Q}$.

\textcolor{black}{Next, multiplying inequality \eqref{Frostman1} by $|x|^{s}$ and making use of the facts that,
outside of a set thin at infinity,
$\lim_{|x|\to+\infty}|x|^{s}U^{\mu_{Q}}(x)=1$, see Lemma \ref{Mizu}, %\cite[Theorem 3.3]{KM},
and that a polar set is thin at infinity,
we get %\textcolor{black}{(see \eqref{beh-Q})}
$$
K_{Q}:=\lim_{|x|\to+\infty,~x\in\Sigma}|x|^s\left(Q(x) - Q(\infty)\right) \geq -1.
$$
Moreover, from \eqref{Frostman2}
%from \eqref{Frostman2} and Lemma \ref{elem-lem} follows that, for any $\eps>0$ and $x\in S_{\mu_{Q}}$ large enough,
%$$
%|x|^s\left(Q(x) - Q(\infty)\right)\leq-1+\eps.
%$$
%Hence
%$$
%\liminf_{|x|\to+\infty,~x\in\Sigma}|x|^s\left(Q(x) - Q(\infty)\right) \leq -1.
%$$
%Equality (\ref{beh-Q}) is thus proved.
%and \eqref{beh-Q}} 
follows that
\begin{align*}
0 & \geq\liminf_{|x|\to+\infty,~x\in S_{\mu_{Q}}}|x|^{s}U^{\mu_{Q}}(x)+K_Q
%\\[5pt]
%& 
= \liminf_{|x|\to+\infty,~x\in S_{\mu_{Q}}}\int\frac{|x|^{s}}{|t-x|^{s}}d\mu_{Q}(t)+K_Q
\\[5pt]
& \geq \int\liminf_{|x|\to+\infty,~x\in S_{\mu_{Q}}}\frac{|x|^{s}}{|t-x|^{s}}d\mu_{Q}(t)+K_Q
%\\[5pt]
%& 
=1+K_Q,
\end{align*}
where in the second inequality, we have used Fatou's lemma (see e.g.\ \cite{R}), and in the last equality, the fact that $S_{\mu_{Q}}$ is unbounded. Item (ii) is thus proved.
}
\\[.5\baselineskip]
{\bf (iii)} Assume now that $d-2\leq s<d$ and that a probability measure $\mu_{Q}$ exists that satisfies the Frostman inequalities on $\Sigma$, where we substract $Q(\infty)$ on both sides:
\begin{align*}
U^{\mu_{Q}}(x)+Q(x)-Q(\infty) & \geq F_{Q}-Q(\infty),\quad\text{q.e. on }\Sigma, \\[5pt]
U^{\mu_{Q}}(x)+Q(x)-Q(\infty) & \leq F_{Q}-Q(\infty), \quad x\in S_{\mu_Q}.
\end{align*}
%By the monotonicity property stated in Proposition \ref{monot}, $F\geq0$. But,
 \textcolor{black}{ Using Remark \ref{Rem-Frost} again, we have that  $Q(\infty)\geq F_{Q}$.} Then, from the second inequality, the assumption (\ref{cond-Q2}) evaluated near infinity \textcolor{black}{and Lemma \ref{elem-lem}},
%the fact that
%the potential $U^{\mu_{Q}}(x)$ behaves like $1/|x|^{s}$ near infinity, possibly outside a set thin at infinity, recall (\ref{Mizu2}), 
we derive that
$S_{\mu_Q}$ must be bounded. Thus, comparing external fields, we have, for some $\epsilon>0$,
$$-\frac{1}{|x|^{s}}+\epsilon<-\frac{c}{|x|^{s}}\leq Q(x)-Q(\infty),\quad x\in S_{\mu_Q}.$$
In the case of the external field $-1/|x|^{s}$ we know from assertion i) that a minimizing measure exists, and Corollary \ref{cor:deltaorigin} shows (see below) that the corresponding Robin constant is zero.
Thus, together with the property of strict monotonicity of Proposition \ref{monot}, we obtain that
$F_{Q}-Q(\infty)>0$, a contradiction. Hence, no probability measure on $\Sigma$ satisfies Frostman inequalities and, by the direct implication in Theorem \ref{equiv-Min-Fro}, no measure minimizing the weighted energy $I_{Q}$ exists on $\Sigma$.
%Finally, we observe that, since the proofs of assertions i) and ii) do not use the domination principle, they actually hold true when $0<s<d$.
%Choosing $F=Q^{*}(0)=Q(\infty)$, we see from (\ref{F8}) that $c$ must be postive. By monotonicity, this imply that there is no $F\in(-\infty,Q^{*}(0)]$ so that the Robin constant $c$ equals 0, and thus no solution to the Riesz energy problem.
%Here we assume that $Q(\infty)$ is finite and $Q$ satisfies (\ref{cond-Q2}) at infinity. We show that when $F$ is chosen equal to $Q^{*}(0)$ the Robin constant $c$ in (\ref{F7})-(\ref{F8}) must be positive. By monotonicity, this will imply that there is no $F\in(-\infty,Q^{*}(0)]$ so that the Robin constant $c$ equals 0, and thus no solution to the Riesz energy problem. Now $c$ is bigger than the Robin constant associated to $Q^{*}(x)=Q^{*}(0)-(1-\eps)|x|^{s}$ and $F=Q^{*}(0)$, we know that $c=\eps>0$.
\qed
%\end{proof}
\\[\baselineskip]
{\bf Proof of Corollary \ref{cor:deltaorigin}.}
Corollary \ref{cor:deltaorigin} is a consequence of Theorem \ref{thm:general}, except for the fact that the proof of assertion iii) of the theorem relies on the property that $F_{Q}=0$ when $Q(x)=-1/|x|^{s}$. So, in that case, we
give two direct proofs, of possible independent interest, of the existence of a minimizing measure $\mu_{Q}$ and the fact that $F_{Q}=0$ :
%[Proof of assertion i) of Corollary \ref{cor:deltaorigin} when $c=1$]
%When $Q(x)=-1/|x|^{s}$, we propose two direct proofs of existence of a minimizing measure $\mu_{Q}$.
%\\
\\
-- {\em 1st proof, on $\Sigma$ using balayage} (recall that $d-2\leq s<d$): inequalities (\ref{Frostman1})-(\ref{Frostman2}) become
\begin{align*}
U^{\mu_{Q}}(x)-U^{\delta_{0}}(x) & \geq F_{Q},\quad\text{q.e. on }\Sigma, \\[5pt]
U^{\mu_{Q}}(x)-U^{\delta_{0}}(x) & \leq F_{Q}, \quad x\in S_{\mu_Q},
\end{align*}
which are satisfied when $\mu_{Q}=\hat\delta_{0}$, the balayage of $\delta_{0}$ on $\Sigma$ (see Section 3.2). Note that, by \cite[Theorem 3.22]{FZ} and assumption A2, we know that there is no mass loss when sweeping out $\delta_{0}$ onto $\Sigma$, thus $\|\hat\delta_{0}\|=1$, as required. Note also that we get $F_{Q}=0$ for the value of the Robin constant.
\\
-- {\em 2nd proof, using the Kelvin transform of center 0 and radius 1} : %we consider $Q^{*}(x)=-c|x|^{s}$ with $c\geq1$. We
note first, that $|x||x^{*}|=1$ and for a general $c$, (\ref{F3})-(\ref{F4}) translate into
\begin{align*}
U^{\mu_{Q}^{*}}(x)-\frac{F_{Q}}{|x|^{s}} & \geq c,\quad\text{q.e. on }\Sigma^{*}, %\label{F5}
\\[5pt]
U^{\mu_{Q}^{*}}(x)-\frac{F_{Q}}{|x|^{s}} & \leq c, \quad x\in S_{\mu_{Q}^{*}}, %\label{F6}
\end{align*}
with the condition $U^{\mu_{Q}^{*}}(0)=1$. Assume $F_{Q}=0$. Then the above inequalities characterize, up to a constant, the unweighted equilibrium measure for the compact set $\Sigma^{*}$. Then, we know that $U^{\mu_{Q}^{*}}(x)=c$ q.e.\ on $\Sigma$ and, in particular, because of assumptions A1-A2, equality holds at 0, which induces
 $c=U^{\mu_{Q}^{*}}(0)=1$. In particular, we see that the unweighted energy problem on $\Sigma^{*}$ corresponds to the energy problem on $\Sigma$ with external field $-1/|x|^{s}$ and, as a consequence, we deduce again the existence of $\mu_{Q}$ on $\Sigma$ when $c=1$.
 \qed
% \\ ii) This is just a special case of assertion iii) of Theorem \ref{thm:general}.
%\end{proof}
%For the proof of Theorem \ref{thm:general} we first need the following result.
\begin{remark}\label{rem-locat}
%In the proof of Theorem \ref{prelim}, the measure $\mu^{*}$ is obtained as the weak-* limit of the sequence $\mu_{n}^{*}$. Thus, if we know that all supports of the measures $\mu_{n}^{*}$ are included in a particular subset of $\Sigma^{*}$, e.g. its boundary, the same can be asserted for the support of $\mu^{*}$. Since in the proof of Theorem \ref{thm:general} the measure $\mu_{Q}$ is obtained as the Kelvin transform of $\mu^{*}$, we may then also derive a similar property for the support of $\mu_{Q}$. For instance,
We just saw that the energy problem with $Q(x)=-1/|x|^{s}$ on $\Sigma$ corresponds to the unweighted energy problem on the compact set $\Sigma^{*}$. When $s=d-2$, it is known that the support of the equilibrium measure is included in the outer boundary of $\Sigma^{*}$, and thus the support of $\mu_{Q}$ is included in the boundary of $\Sigma$, which may be a bounded set.
\end{remark}

\subsection{Proof of Theorem \ref{thm:weakadmiss}}

%First, take into account that since $Q(x) = U^{\nu} (x)$, with $\nu (\R^{d+1}) = -1$, we have that
%$$Q(x) = -\,\frac{1}{|x|^s}(1+o(x)),\qquad\text{as }x\to\infty,$$
%and, thus, assertion i) in Theorem \ref{thm:general} ensures the existence of the equilibrium measure.
%
%Now, we are concerned with the compactness of the support.
From Lemma \ref{lem:balayage}, the weak balayage of $\nu$ is given by \eqref{nubalay}
\begin{equation*}%\label{balaynu}
d \nu^w (x) = \frac{2^{\alpha}}{W(S^{d}) \omega_{d}} \left(\int \frac{|y_{d+1}|^{\alpha}}{|x - y|^{2d-s}}\,d \nu(y) \right)d x,\quad x\in \R^d.
\end{equation*}
%where $C_{s,d}=2^{\alpha}/W_{S}$. %is a positive constant given by \eqref{constbalay}.
%But in this case,
Since $\nu (\R^{d+1}) = -1$, the signed equilibrium measure $\eta_Q$ exists and agrees with $-\nu^w$, i.e.
\begin{equation*}%\label{etadens}
d \eta_Q(x) = - \frac{2^{\alpha}}{W(S^{d}) \omega_{d}}\left(\int \frac{|y_{d+1}|^{\alpha}}{|x - y|^{2d-s}}\,d \nu(y) \right)\,d x\,,\quad x\in \R^d,
\end{equation*}
which implies that, for $x$ large, its density behaves like
%\begin{equation}\label{limit}
$$
%d \eta_Q(x) \approx
- \frac{2^{\alpha}}{W(S^{d}) \omega_{d} |x|^{2d-s}}\int |y_{d+1}|^{\alpha}\,d\nu(y).
$$%\end{equation}
From condition \eqref{sufficRd}, this density is negative which guarantees the compactness of $S_{\eta_Q^+}$, and then Lemma \ref{lem:inclusion} ensures the compactness of $S_{\mu_Q}$.
\qed
\subsection{Proof of Theorems \ref{thm:singleattr} and \ref{thm:weakly}}
In this subsection we assume that $d-2\leq s<d$, therefore the weak balayage from Lemma \ref{lem:balayage} is regular balayage. The next lemma we study the balayage $Bal (\delta_y, B_{R})$ of $\delta_{y}$ onto the ball $B_{R}\subset\R^{d}$ of radius $R$. By symmetry, its density $Bal' (\delta_y, B_{R})(x)$ is a radial function of $x$. As we are about to see, it behaves like $(R^2 - |x|^2)^{-\alpha/2}$ near the boundary of $B_{R}$. Hence, it will be convenient to introduce the following notation,
\begin{align}
\Lambda_{R}(x) & = (R^2 - |x|^2)^{\alpha/2}Bal' (\delta_y, B_R)(x),\qquad |x|<R, \notag
\\[10pt]
\Lambda_{R}^{*} & = \lim_{|x|\to R_{-}}(R^2 - |x|^2)^{\alpha/2}Bal' (\delta_y, B_R)(x).
\label{Lambda*}
\end{align}
\begin{lemma}\label{lem:balR}
Let $y = (0; y_{d+1})$ with $y_{d+1} >0$. The following holds true:
\\
(i) The density $Bal' (\delta_y, B_{R})(x)$, $|x|<R$,
%onto the ball $B_{R}$ of radius $R$
is given by
\begin{equation}\label{balayageBR}
%\begin{split}
Bal' (\delta_y, B_R)(x) = \frac{(2y_{d+1})^{\alpha}}{W(S^{d})\omega_{d}}
 \left(\frac{1}{(|x|^2 + y_{d+1}^2)^{d-s/2}}+
 \frac{\sin(\alpha\frac{\pi}{2})I(x)}{\pi (R^2 - |x|^2)^{\alpha/2}}\right),
%\end{split}
\end{equation}
where %$B(x,y)$ denotes the Beta function, and
\begin{equation*}%\label{integralI(x)infty}
I(x)=\int_0^{+\infty}\frac{v^{\alpha/2}\,dv}{(v+R^2+y^2_{d+1})^{d-s/2} (v+R^2-|x|^2)}.
\end{equation*}
(ii) Let $m_R$ be the mass of $Bal (\delta_y, B_R)$. Then
%, the mass of $Bal (\delta_y, B_R)$. Then,
\begin{equation}\label{massRy}
m_R = \frac{U^{\omega_R}(y)}{W(B_R)}\in(0,1).
\end{equation}
%$m_R\in (0,1)$ for $R\in (0,+\infty)$ and
The mass $m_{R}$ is a non-decreasing function of $R$ and a decreasing function of $y_{d+1}$. Moreover
\begin{equation}\label{lim-mR}
\lim_{R\to 0}m_R = 0,\qquad \lim_{R\to\infty}m_R = 1.
\end{equation}
The second limit can be made more precise, namely, as $R\to\infty$,
\begin{equation}\label{lim-mR-inf}
m_R = 1-\frac{2}{\alpha B(s/2,\alpha/2)}
\left(\frac{y_{d+1}}{R}\right)^{\alpha}+o\left(\left(\frac{y_{d+1}}{R}\right)^{\alpha}\right)\quad\text{as }
R\to\infty.
\end{equation}
% from the point--mass to $\R^d$.
(iii) With the notation introduced in (\ref{Lambda*}), we have
%Near the boundary of the ball, the density of $Bal (\delta_y, B_{R})$ satisfies
\begin{equation}\label{beh-bal}
%\lim_{|x|\to R_{-}}(R^{2}-|x|^{2})^{\alpha/2}Bal' (\delta_y, B_{R})(x)
\Lambda_{R}^{*}=
\frac{y_{d+1}^{\alpha}K_{s,d}^{(1)}}{(R^2 + y_{d+1}^2)^{d/2}},\qquad
K_{s,d}^{(1)}=\frac{2^{\alpha}\sin(\alpha\frac{\pi}{2})B(d/2,\alpha/2)}
{\pi \omega_{d}W(S^{d})}.
\end{equation}
%\\
(iv) For $x\in B_{R}$, we have
\begin{multline}\label{ineq-Lamb}
\Lambda_{R}(x)-\Lambda_{R}^{*}
=\frac{(2y_{d+1})^{\alpha}(R^2 - |x|^2)}{W(S^{d})\omega_{d}}
\\[10pt]
\times
 \left(\frac{(R^2 - |x|^2)^{\alpha/2-1}}{(|x|^2 + y_{d+1}^2)^{d-s/2}}
-\frac{\sin(\alpha\frac{\pi}{2})}{\pi}
\int_{0}^{+\infty}\frac{v^{\alpha/2-1}dv}{(v+R^2+y^2_{d+1})^{d-s/2} (v+R^2-|x|^2)}\right)\geq0.
\end{multline}
%(v) For a given $x\in B_{R}$, the difference $\Lambda_{R}(x)-\Lambda_{R}^{*}$ is a decreasing function of $y_{d+1}$.
\end{lemma}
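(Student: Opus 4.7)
My plan is to establish the four assertions in order: part (i) provides the explicit formula, and parts (ii)--(iv) are extracted from it.

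For (i), the key algebraic observation is that $|x-y|^{2d-s} = (|x|^2 + y_{d+1}^2)^{d-s/2}$, so the first summand in (5.3) coincides with the density of the weak balayage $\delta_y^w$ from Lemma \ref{lem:balayage} restricted to $B_R$. Since $U^{\delta_y}(x) = U^{\delta_y^w}(x)$ for all $x\in\R^d$, we have $Bal(\delta_y,B_R) = Bal(\delta_y^w, B_R)$, and by additivity of balayage on positive measures and the fact that $Bal(\mu|_{B_R}, B_R) = \mu|_{B_R}$,
\[ Bal(\delta_y^w, B_R) = \delta_y^w|_{B_R} + Bal(\delta_y^w|_{B_R^c}, B_R). \]
It therefore remains to identify $Bal(\delta_y^w|_{B_R^c}, B_R)$ with the second summand of (5.3). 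To this end I plan to superpose the classical Riesz--Poisson kernel of the ball (see \cite[Ch. IV]{Land}): for a source $t\in\R^d\setminus\bar B_R$,
\[ \frac{d\,Bal(\delta_t,B_R)}{dx}(x) = \frac{\Gamma(d/2)\sin(\alpha\pi/2)}{\pi^{d/2+1}}\cdot\frac{(|t|^2-R^2)^{\alpha/2}}{(R^2-|x|^2)^{\alpha/2}|x-t|^d}. \]
Inserting the density of $\delta_y^w$ from Lemma \ref{lem:balayage}, slicing $\R^d\setminus B_R$ into spheres of radius $r>R$, evaluating the rotationally symmetric spherical integral $\int_{|t|=r}d\sigma(t)/|x-t|^d$ via its known closed form, and performing the change of variable $v = r^2 - R^2$ reduces the resulting double integral to the one-variable form $I(x)$ in (5.3).

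For (ii), I apply Fubini together with $U^{Bal(\delta_y,B_R)} = U^{\delta_y}$ q.e. on $B_R$ and $U^{\omega_R} \equiv W(B_R)$ on $B_R$ to get
\[ W(B_R)\,m_R = \int U^{Bal(\delta_y,B_R)}\,d\omega_R = \int U^{\delta_y}\,d\omega_R = U^{\omega_R}(y), \]
which is (5.4). Substituting the Euler integral representation of ${}_2F_1$ into (3.23) and combining with (3.24) yields the clean closed form
\[ m_R = \frac{1}{B(s/2,\alpha/2)}\int_0^{z}u^{s/2-1}(1+u)^{-d/2}\,du,\qquad z := R^2/y_{d+1}^2, \]
from which all conclusions of (ii) are immediate: strict monotonicity in $z$ gives non-decreasingness in $R$ and strict decreasingness in $y_{d+1}$; the Mellin identity $\int_0^{+\infty}u^{s/2-1}(1+u)^{-d/2}\,du = B(s/2,\alpha/2)$ yields $m_R\to 1$ as $R\to+\infty$, while $m_R\to 0$ as $R\to 0$ is obvious; and the asymptotic (5.6) follows from the tail estimate $\int_z^{+\infty}u^{s/2-1}(1+u)^{-d/2}\,du = (2/\alpha)z^{-\alpha/2}(1+o(1))$ as $z\to +\infty$.

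Parts (iii) and (iv) come directly from (5.3). As $|x|\to R^-$, the first summand multiplied by $(R^2-|x|^2)^{\alpha/2}$ vanishes, and at $|x|=R$ the integral $I$ reduces, via the substitution $v = (R^2+y_{d+1}^2)t$, to the Beta integral $(R^2+y_{d+1}^2)^{-d/2}B(\alpha/2,d/2)$, yielding (5.7). For (iv), writing $v/(v+R^2-|x|^2) = 1 - (R^2-|x|^2)/(v+R^2-|x|^2)$ inside $I(x)$ gives
\[ I(x) - I^* = -(R^2-|x|^2)\int_0^{+\infty}\frac{v^{\alpha/2-1}\,dv}{(v+R^2+y_{d+1}^2)^{d-s/2}(v+R^2-|x|^2)}, \]
where $I^*$ is the limiting value from (iii); combining this with $(R^2-|x|^2)^{\alpha/2} = (R^2-|x|^2)(R^2-|x|^2)^{\alpha/2-1}$ yields the equality in (5.8). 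For the non-negativity, I set $a = R^2-|x|^2>0$, $b = |x|^2+y_{d+1}^2>0$ and $c = R^2+y_{d+1}^2 = a+b$, and use the classical identity $\int_0^{+\infty}v^{\alpha/2-1}/(v+a)\,dv = \pi a^{\alpha/2-1}/\sin(\alpha\pi/2)$ (valid for $0<\alpha<2$) to recast the required inequality as
\[ \int_0^{+\infty}\frac{v^{\alpha/2-1}}{v+a}\left[\frac{1}{b^{d-s/2}} - \frac{1}{(v+c)^{d-s/2}}\right]dv \geq 0, \]
whose integrand is pointwise non-negative since $b = c - a \leq c + v$ for $v \geq 0$ and $d - s/2 > 0$. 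The main obstacle is the spherical-integral reduction in (i), where careful bookkeeping of normalizing constants (in particular matching $W(S^d)$ and $\omega_d$) is needed; once (5.3) is in hand, parts (ii)--(iv) are routine calculations on the explicit formula.
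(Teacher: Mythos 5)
Your proof is correct and follows essentially the paper's route for parts (i), (iii), and (iv): the authors likewise build the balayage density by superposing the Riesz--Poisson kernel of $B_R$ over the density of $\delta_y^w = Bal(\delta_y,\R^d)$ restricted to $B_R^c$, reducing the resulting spherical integral via the closed-form identity for $\int_0^\pi \sin^{d-2}\theta\,d\theta/(\rho^2+|x|^2-2\rho|x|\cos\theta)^{d/2}$, and for (iv) they use exactly the Mellin identity $\int_0^{+\infty} v^{\alpha/2-1}(v+a)^{-1}\,dv = \pi a^{\alpha/2-1}/\sin(\alpha\pi/2)$ together with $R^2+y_{d+1}^2 \geq |x|^2+y_{d+1}^2$, packaged as a bound on the whole integral rather than a pointwise inequality of the integrand. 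Where you genuinely diverge is part (ii): after obtaining $m_R = U^{\omega_R}(y)/W(B_R)$, the paper establishes $m_R<1$ via the Frostman inequality and the domination principle, monotonicity in $R$ via transitivity of balayage (sweeping onto $B_{R'}$ and then onto $B_R$ can only lose mass), and the two limits and asymptotics by separate arguments; you instead push through the Euler integral representation of ${}_2F_1$ to reach the closed form $m_R = B(s/2,\alpha/2)^{-1}\int_0^{R^2/y_{d+1}^2} u^{s/2-1}(1+u)^{-d/2}\,du$, from which monotonicity in both variables, both limits, and the tail asymptotic fall out by elementary calculus. Your version of (ii) is more self-contained and arguably cleaner, at the cost of a slightly longer algebraic manipulation up front, while the paper's version leans on general potential-theoretic principles that it has already introduced; both are valid.
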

\begin{proof}
{\bf (i)} From Lemma \ref{lem:balayage}, the \emph{superposition principle}, \cite[Eq.(4.5.6)]{Land}, and the expression for the Poisson kernel of a ball, \cite[Eq.(1.6.11), p.\ 121]{Land}, we derive that the density of the balayage of $\delta_y$ onto $B_R$ is given by
\begin{equation}\label{balBR}
Bal' (\delta_y, B_R)(x) =
\frac{(2y_{d+1})^{\alpha}}{W(S^{d})\,\omega_{d}}
%\right)
\left(\frac{1}{(|x|^2 + y_{d+1}^2)^{d-s/2}}+\frac{\Gamma ({d}/{2})\sin (\alpha \pi/{2})A}
{\pi^{d/2+1}(R^2 - |x|^2)^{\alpha/2}}\right),
\end{equation}
where
\begin{equation*}%\label{integralI(x)}
A =
%I(x;R,y_{d+1},s,d) :=
\int_{|t|\geq R}\,\frac{(|t|^2 - R^2)^{\alpha/2}}{(|t|^2 + y_{d+1}^2)^{d-s/2}}\,
\frac{d t}{|x - t|^d}.
\end{equation*}
%and
%\begin{equation}\label{Ksd}
%K_{s,d}^{(2)} = \pi^{-d/2-1}\Gamma ({d}/{2})\sin (\alpha \pi/{2}).
%\end{equation}
Using polar coordinates in $\R^d$, $A$ may be rewritten as
\begin{equation*}%\label{I(x)polar}
2\pi\,\prod_{k=1}^{d-3}\,\int_0^{\pi}\,\sin^k \theta\,d\theta\,
\int_R^{+\infty}\,\int_0^{\pi}\,\frac{(\rho^2 - R^2)^{\alpha/2}\,\rho^{d-1}\,\sin^{d-2}\theta\,d\rho\,d\theta}{(\rho^2 + y_{d+1}^2)^{d-s/2}\,(\rho^2+|x|^2 - 2\rho |x| \cos \theta)^{d/2}}\,.
\end{equation*}
Setting $\rho = \rho_1 |x|$, we have
$$\int_0^{\pi}\,\frac{\sin^{d-2}\theta\,d\theta}{(\rho^2+|x|^2 - 2\rho |x| \cos \theta)^{d/2}} = \,\frac{1}{|x|^d}\,\int_0^{\pi}\,\frac{\sin^{d-2}\theta\,d\theta}{(\rho_1^2+1 - 2\rho_1 \cos \theta)^{d/2}}.$$
From \cite[p.\ 400]{Land}, the
above expression equals
$$
%\int_0^{\pi}\,\frac{\sin^{d-2}\theta\,d\theta}{(\rho^2+|x|^2 - 2\rho |x| \cos \theta)^{d/2}} =
\frac{1}{\rho^{d-2} (\rho^2-|x|^2)}\,\int_0^{\pi}\,\sin^{d-2} \theta d\theta,$$
which allows us to write $A$ as %\eqref{I(x)polar} in the form
\begin{align*}
%\begin{split}
A & = 2\pi\,\prod_{k=1}^{d-2}\,\int_0^{\pi}\,\sin^k \theta\,d\theta\,
\int_R^{+\infty}\frac{(\rho^2 - R^2)^{\alpha/2}\,\rho\,d\rho}{(\rho^2 + y_{d+1}^2)^{d-s/2}\,(\rho^2-|x|^2)}
\\[10pt]
& = \frac{\pi^{d/2}}{\Gamma(d/2)}\,\int_0^{+\infty}\,\frac{v^{\alpha/2}\,dv}{(v+R^2+y^2_{d+1})^{d-s/2} (v+R^2-|x|^2)}.
%\\
%& = \frac{\omega_{d}}{2}\,\int_0^{+\infty}\,\frac{v^{\alpha/2}\,dv}{(v+R^2+y^2_{d+1})^{d-s/2} (v+R^2-|x|^2)},
%\end{split}
\end{align*}
This, along with \eqref{balBR}, complete the proof of (\ref{balayageBR}).
\\
{\bf (ii)} Formula \eqref{massRy} follows from \cite[Eq.(4.5.6')]{Land}.
From the second Frostman inequality and the domination principle, $U^{\omega_R}(y) \leq W(B_R)$ and thus, $m_R \leq 1$. Since the mass of a measure can only decrease when performing a balayage, and since, for $R<R'$, the balayage onto $B_{R}$ can be obtained by the balayage onto $B_{R'}$ followed by a balayage onto $B_{R}$, we derive that $m_{R}$ increases with $R$. From \eqref{massRy} it is immediate that $m_R$ is a decreasing function of $y_{d+1}$.

When $R\to0$, $U^{\omega_{R}}(y)\to|y|^{-s}$ while $W(B_{R})\to\infty$ which implies the first limit in (\ref{lim-mR}).
The behavior (\ref{lim-mR-inf}) of $m_{R}$ at infinity is derived from \eqref{massRy}, (\ref{ballenergy}), and an expansion of (\ref{U-ball}) when $R\to\infty$.
% PROOF that Limit =1 without the formulas
%For the second limit, we note that, from Lemma \ref{lem:balayage}, or \cite[Theorem 3.22]{FZ}, the mass of
%$Bal(\delta_y,\R^{d})$ is 1. Assume $\lim_{R\rightarrow \infty}m_R = \delta<1$. There exists an $R'$ large enough such that the mass $Bal(\delta_y,\R^{d})(B_{R'})$ is larger than $\delta$ and thus $m_{R'}>\delta$, a contradiction.
\\
{\bf (iii)} The behavior of the density of the balayage near the boundary of $B_{R}$ simply follows from
(\ref{balayageBR}) and the easily checked equality
$$
I(R)=(R^{2}+y_{d+1}^{2})^{-d/2}B(d/2,\alpha/2).
$$
{\bf (iv)} The expression (\ref{ineq-Lamb}) for $\Lambda_{R}(x)-\Lambda_{R}^{*}$ follows from the definition of $\Lambda_{R}(x)$, $\Lambda_{R}^{*}$, and the expression (\ref{balayageBR}) for the balayage. The inequality %(\ref{ineq-Lamb}) to be proved
is equivalent to
\begin{align*}
\frac{(R^2 - |x|^2)^{\alpha/2-1}}{(|x|^2 + y_{d+1}^2)^{d-s/2}}
& \geq
%\frac1\pi\sin(\alpha\frac{\pi}{2})(I(R)-I(x))
\frac1\pi\sin(\alpha\frac{\pi}{2})
\int_0^{+\infty}\,\frac{v^{\alpha/2-1}\,dv}{(v+R^2+y^2_{d+1})^{d-s/2}(v+R^{2}-|x|^{2})}.
\end{align*}
But the last integral is smaller than
\begin{align*}
\frac{1}{(R^2+y^2_{d+1})^{d-s/2}}
\int_0^{+\infty}\,\frac{v^{\alpha/2-1}\,dv}{(v+R^{2}-|x|^{2})}
& =
\frac{(R^{2}-|x|^{2})^{\alpha/2-1}}{(R^2+y^2_{d+1})^{d-s/2}}
\int_0^{+\infty}\,\frac{v^{\alpha/2-1}\,dv}{(v+1)}
\\
& = \frac{(R^{2}-|x|^{2})^{\alpha/2-1}}{(R^2+y^2_{d+1})^{d-s/2}}
\frac{\pi}{\sin(\alpha\frac{\pi}{2})},
\end{align*}
which proves the inequality
%finishes the proof of (\ref{ineq-Lamb})
since $(R^2+y^2_{d+1})^{d-s/2}
\geq(|x|^2+y^2_{d+1})^{d-s/2}$.
\end{proof}
\noindent
\textbf{Proof of Theorem \ref{thm:singleattr}}
{\bf (i)}
The proof is based on the signed equilibrium measures,
%and the subsequent I.B.A.
as explained in Section 3.2. Since the support $S_{\mu_Q}$ of the equilibrium measure $\mu_Q$ is a compact set, it is a subset of a ball $B_R$, for $R$ large enough. Pick such
an $R>0$. The signed equilibrium measure $\eta_{Q,R}$ %By definition,
%we have that
of the ball $B_{R}$ exists and can be expressed in terms of the balayage $Bal (\delta_y,B_R)$ and the equilibrium measure $\omega_{R}$, %as follows,
\begin{equation}\label{signedsingleat}
\eta_{Q,R} = - \gamma Bal (\delta_y,B_R) + (1 + \gamma m_R)\omega_R.
\end{equation}
Indeed, the right-hand side satisfies (\ref{defsigned}) and has total mass 1.
In view of (\ref{equilball}), (\ref{beh-bal}) and (\ref{signedsingleat}), the density of $\eta_{Q,R}$ near the sphere $|x| = R$ satisfies
\begin{align*}
%\begin{split}
H(R) & := \lim_{|x|\rightarrow R}(R^2 - |x|^2)^{\alpha/2}\eta'_{Q,R}(x) =
-\gamma\Lambda_{R}^{*}+(1+\gamma m_{R})c_{R}
\\
& =-\frac{\gamma y_{d+1}^{\alpha}K_{s,d}^{(1)}}{(R^2 + y_{d+1}^2)^{d/2}}
+\frac{\Gamma (1+s/2)}{\pi^{d/2} \Gamma (1-\alpha/2)}\frac{(1+\gamma m_{R})}{R^{s}}.
%\end{split}
\end{align*}
Observe that
$$
\lim_{R\to0}H(R)=+\infty,\qquad\lim_{R\to\infty}H(R)=0_{-},
$$
where, for the first limit, we note that $m_{R}\to0$ as $R\to0$, and, for the second limit, we note that, as $R\to\infty$, the second term is dominant (since $s<d$) and that $1+\gamma m_{R}\to1+\gamma<0$. Hence, by continuity, there exists values of $R$ for which $H(R)=0$. We denote by $R_{0}$ the largest such value (which exists).

We next check that $\eta_{Q,R_0}$ is nonnegative on the closed ball $B_{R_0}$.
Indeed, from (\ref{signedsingleat}), and the definition of $R_{0}$, we have
\begin{equation}\label{Eq-R0}
-\gamma\Lambda_{R_{0}}^{*}+(1+\gamma m_{R_{0}})c_{R_{0}}=0.
\end{equation}
Hence, for $|x|\leq R_{0}$,
$$
(R^2 - |x|^2)^{\alpha/2}\eta'_{Q,R}(x)
=-\gamma\Lambda_{R_{0}}(x)+(1+\gamma m_{R_{0}})c_{R_{0}}=
-\gamma(\Lambda_{R_{0}}(x)-\Lambda_{R_{0}}^{*})\geq0,
$$
where, for the last inequality, we use (\ref{ineq-Lamb}).

Now, for $R>R_{0}$, we know that $\eta'_{Q,R}(R)<0$. Hence by iii) of Lemma \ref{lem:inclusion}, $S_{\mu_Q}$ is included in $B_{R_{0}}$, and by ii) of that same lemma, $\mu_{Q}=\eta_{Q,R_{0}}$.
In particular, the support of $\mu_{Q}$ is the ball of radius $R_{0}$.
% and its density vanishes on the boundary.

{\bf (ii)} By combining (\ref{signedsingleat}) with the explicit expressions (\ref{equilball}) and (\ref{balayageBR}) of $\omega_{R_{0}}$ and $Bal(\delta_{y},B_{R_{0}})$, together with (\ref{Eq-R0}), we may derive that
$$
\mu'_Q(x) = -\gamma \frac{(2y_{d+1})^{\alpha}}{W(S^{d})\omega_{d}}
\left(\frac{1}{(|x|^2 + y_{d+1}^2)^{d-{s}/{2}}}
- \frac{\sin(\alpha\pi/2)(I(x)-I(R_{0}))}{\pi(R_{0}^{2}-|x|^{2})^{\alpha/2}}
\right).
$$
Performing the change of variable $v = (R_0^2-|x|^2) u $ in $I(x)$ and $I(R_{0})$
leads to \eqref{muQ}. Finally, when $|x|=R_{0}$, the second factor of \eqref{muQ} reduces to
$$
\frac{1}{(R_{0}^2 + y_{d+1}^2)^{d-{s}/{2}}} \left(1
- \frac{\sin(\alpha\pi/2)}{\pi}\int_{0}^{+\infty}(u+1)^{-1}u^{\alpha/2-1}du\right)=0,
$$
which shows the vanishing of $\mu_{Q}'$ on the boundary of the ball.
%the integral in the right--hand member of the first line of \eqref{bound}, we obtain the expression
%\begin{equation}\label{ratio}
%\frac{\widetilde{I}(R_0) - I(x)}{((R_0)^2 - |x|^2)^{\alpha/2}}\,=\,\frac{\omega_{d}}{2}\,J(x)\,,
%\end{equation}
%with $J(x)$ given in \eqref{integralJ(x)}, which proves \eqref{muQ} and renders the proof of (i).

{\bf (iii)} Recall that the radius $R_{0}$ of the ball $S_{\mu_Q}$ is a solution of (\ref{Eq-R0}), that is
$$
-\gamma\Lambda_{R_{0}}^{*}+(1+\gamma m_{R_{0}})c_{R_{0}}=0.
$$
Plugging the explicit expressions of $\Lambda_{R_{0}}^{*}$, $m_{R_{0}}$ and $c_{R_{0}}$ given respectively by (\ref{beh-bal}), (\ref{massRy}), and (\ref{equilball}), and making also use of (\ref{U-ball}) and (\ref{ballenergy}), one may check, after some computations, that the following equation is obtained,
\begin{equation}\label{Eq-final}
z^{s/2}\left({}_2F_{1}\left(\frac{s}{2},\frac{d}{2},1+\frac{s}{2},-z\right)
-(1+z)^{-d/2}\right)=-\frac{\Gamma(\alpha/2)\Gamma(1+s/2)}{\gamma\Gamma(d/2)},
%= -\frac{s}{2\gamma}B\left(\frac{s}{2},\frac{\alpha}{2}\right),
\end{equation}
where $z=(R_{0}/y_{d+1})^{2}$. The left-hand side, that we denote by $G(z)$, is equal to
\begin{align*}
G(z) & =z^{s/2}\left({}_2F_{1}\left(\frac{s}{2},\frac{d}{2},1+\frac{s}{2},-z\right)
-{}_2F_{1}\left(1+\frac{s}{2},\frac{d}{2},1+\frac{s}{2},-z\right)\right) %\notag
\\[10pt]
& = \frac{\Gamma(1+s/2)}{\Gamma(d/2)\Gamma(1-\alpha/2)}z^{s/2}
\int_{0}^{1}x^{d/2-1}(1-x)^{-\alpha/2}(1+zx)^{-s/2-1}(zx)dx %\notag
\\[10pt]
& = \frac{d}{s+2}z^{s/2+1} {}_2F_{1}\left(1+\frac{s}{2},1+\frac{d}{2},2+\frac{s}{2},-z\right).
%\label{integ-G}
\end{align*}
This gives (\ref{sol}). Finally, we show that a solution to equation (\ref{Eq-final}) exists and is unique. First, we notice that $G(z)$
 is a non-decreasing function of $z\in[0,\infty)$. Indeed,
\begin{align}
G(z)
%& =z^{s/2}\left({}_2F_{1}\left(\frac{s}{2},\frac{d}{2},1+\frac{s}{2},-z\right)
%-{}_2F_{1}\left(1+\frac{s}{2},\frac{d}{2},1+\frac{s}{2},-z\right)\right) \notag
%\\[10pt]
%& = \frac{\Gamma(1+s/2)}{\Gamma(d/2)\Gamma(1-\alpha/2)}z^{s/2}
%\int_{0}^{1}x^{d/2-1}(1-x)^{-\alpha/2}(1+zx)^{-s/2-1}(zx)dx \notag
%\\[10pt]
 = \frac{\Gamma(1+s/2)}{\Gamma(d/2)\Gamma(1-\alpha/2)}
\int_{0}^{1}x^{d/2}(1-x)^{-\alpha/2}(x+1/z)^{-s/2-1}dx, \label{integ-G}
\end{align}
and it is clear that, for any $x\in(0,1)$, $(x+1/z)^{-s/2-1}$ is a non-decreasing function of $z$.
Second, $G(z)$ satisfies $\lim_{z\to0} G(z)= 0$ and
$$
%,\qquad
\lim_{z\to\infty} G(z)=
\frac{\Gamma(\alpha/2)\Gamma(1+s/2)}{\Gamma(d/2)},
$$
where the above limit follows, e.g., from (\ref{integ-G}). Third the right hand-side of (\ref{Eq-final}) is positive and always smaller than the above limit because $\gamma<-1$. Actually it tends to that  limit as $\gamma$ tends to $-1$ from the left.
\qed
\\[10pt]
\textbf{Proof of Theorem \ref{thm:weakly}}
The density (\ref{signedattrep}) of the signed equilibrium measure $\eta_{Q,\R^{d}}$ vanishes when
$$
(1+\gamma)y_{1,d+1}^{\alpha}(|x|^{2}+ y_{2,d+1}^{2})^{d-s/2}-\gamma y_{2,d+1}^{\alpha}
(|x|^{2} + y_{1,d+1}^{2})^{d-s/2}=0.
$$
Solving for $|x|^{2}$, we get
$$
|x|^{2}=\frac{\rho y_{1,d+1}^{2}-y_{2,d+1}^{2}}{1-\rho}=R^{2},
$$
where $\rho$ and $R$ are defined in (\ref{def-g-R}). Hence, $\eta_{Q,\R^{d}}$ is a positive measure if and only if
$$
\frac{\rho y_{1,d+1}^{2}-y_{2,d+1}^{2}}{1-\rho}\leq0\qquad\text{and}\qquad
(1+\gamma)y_{1,d+1}^{\alpha}-\gamma y_{2,d+1}^{\alpha}\geq0,
$$
where the second inequality expresses the fact that the density of $\eta_{Q,\R^{d}}$ is positive at infinity. It is then straightforward to see that the above inequalities are equivalent to
$y_{2,d+1}/y_{1,d+1}\in[g^{1/d},g^{-1/\alpha}]$. Finally, if the first inequality is not met, $\eta_{Q,\R^{d}}$ is a true signed measure and one checks that
\begin{align*}
& \eta_{Q,\R^{d}}(x)<0,~|x|\in[0,R),\quad\eta_{Q,\R^{d}}(x)\geq0,~|x|\in[R,\infty),\quad\text{when }y_{2,d+1}/y_{1,d+1}\in[0,g^{1/d}],
\\%[10pt]
& \eta_{Q,\R^{d}}(x)\geq0,~|x|\in[0,R],\quad\eta_{Q,\R^{d}}(x)<0,~|x|\in(R,\infty),\quad\text{when }y_{2,d+1}/y_{1,d+1}\in[g^{-1/\alpha},\infty).
\end{align*}
%where $R$ is defined in (\ref{def-g-R}).
Together with (i) of Lemma \ref{lem:inclusion}, it implies the statements (i) and (iii) and finishes the proof of the theorem.
\qed
%%%%%%%%%%%%%%%%%%%
\section*{Acknowledgements}
We thank N.V. Zorii for useful comments on this paper.
The research of the first author was supported in part by NSF grant DMS-1936543.
The research of the second author was partially supported by the Spanish Ministerio de Ciencia,
Innovaci\'{o}n y Universidades, under grant MTM2015-71352-P, by Spanish Ministerio de Ciencia e Innovaci\'{o}n, under grant PID2021-123367NB-100, and by the PFW Scholar-in-Residence program.

\obeylines
\texttt{P.~Dragnev (dragnevp@pfw.edu)
Department of Mathematical Sciences, Purdue University Fort Wayne, 
Ft.\ Wayne, IN 46805, USA.
\medskip
R.~Orive (rorive@ull.es)
Departmento de An\'{a}lisis Matem\'{a}tico, Universidad de La Laguna, 
38200 La Laguna (Tenerife), SPAIN.
\medskip
E.~B.~Saff (edward.b.saff@vanderbilt.edu)
Center for Constructive Approximation, Vanderbilt University, 
Department of Mathematics, Nashville, TN  37240, USA.
\medskip
F.~Wielonsky (franck.wielonsky@univ-amu.fr)
Laboratoire I2M, UMR CNRS 7373, Universit\'e Aix-Marseille, 
39 Rue Joliot Curie, F-13453 Marseille Cedex 20, FRANCE.
}

\end{document}